\newtheorem{thm}{Theorem}
\newtheorem{pro}[thm]{Proposition}
\newtheorem{lem}[thm]{Lemma}
\newtheorem{cor}[thm]{Corollary}
\theoremstyle{definition}
\theoremstyle{remark}
\newtheorem{rem}[thm]{{\bf Remark}}
\newtheorem{example}{{\bf Example}}
\newcommand{\af}{\mathrm{afrk}}
\newcommand{\x}{{\bf x}}
\newcommand{\y}{{\bf y}}
\newcommand{\bb}{{\bf b}}
\newcommand{\ba}{{\bf a}}
\newcommand{\al}{\alpha}
\newcommand{\D}{{\cal D}}
\newcommand{\ov}{\overline}
\newcommand{\found}{\mathrm{found}}
\newcommand{\vol}{\mathrm{vol}}
\newcommand{\RM}{\mathrm{RM}}
\newcommand{\aut}{\mathrm{Aut}}
\renewcommand{\thefootnote}
\title{On the volumes and affine types of trades}
\author{ E. Ghorbani$^{\,\rm 1,2,}$\thanks{Corresponding author},~  S. Kamali$^{\,\rm 2}$,~ G.B. Khosrovshahi$^{\,\rm 2}$,~  D.S. Krotov$^{\,\rm 3}$\\[.4cm]
{\normalsize\sl $^{\rm 1}$Department of Mathematics, K.N. Toosi University of Technology,}\\
{\normalsize\sl P. O. Box 16765-3381, Tehran, Iran}\\
{\normalsize\sl $^{\rm 2}$School of Mathematics, Institute for Research in Fundamental Sciences (IPM),}\\
{\normalsize\sl P. O. Box 19395-5746, Tehran, Iran }\\
{\normalsize\sl $^{\rm 3}$Sobolev Institute of Mathematics, Novosibirsk, 630090, Russia}}
\begin{document}
\maketitle
\footnotetext{{\em E-mail Addresses}:  {\tt e\_ghorbani@ipm.ir} (E. Ghorbani), {\tt sakamali@ipm.ir} (S. Kamali), {\tt rezagbk@ipm.ir} (G.B. Khosrovshahi),
  {\tt krotov@math.nsc.ru} (D.S. Krotov)}

 \begin{abstract}
 A $[t]$-trade is a pair $T=(T_+, T_-)$ of disjoint collections
of subsets (blocks) of a $v$-set $V$ such that for every $0\le i\le t$,
any $i$-subset of $V$ is included in the same number
of blocks of $T_+$ and of $T_-$. It follows that $|T_+| = |T_-|$ and this common value is called the volume of $T$.
If we restrict all the blocks to have the same size, we obtain the  classical $t$-trades as a special case of
$[t]$-trades.
It is known that the minimum volume of a nonempty $[t]$-trade is $2^t$.
Simple $[t]$-trades (i.e., those with no repeated blocks) correspond to a Boolean function of degree at most $v-t-1$.
From the characterization of Kasami--Tokura of such functions with small number of ones,
it is known that  any simple $[t]$-trade of volume at most $2\cdot2^t$ belongs to one of two
affine types, called Type\,(A) and Type\,(B) where Type\,(A) $[t]$-trades are known to exist. By considering the affine rank, we prove that
$[t]$-trades of Type\,(B) do not exist.
  Further, we derive the spectrum of volumes of simple trades up to $2.5\cdot 2^t$,
  extending the known result for volumes less than $2\cdot 2^t$.
  We also give a characterization of ``small" $[t]$-trades for $t=1,2$. Finally, an algorithm to produce $[t]$-trades for specified  $t$, $v$
is given. The result of the implementation of the algorithm for $t\le4$, $v\le7$ is reported.

\noindent {\bf Keywords:} $[t]$-trade, Reed--Muller code, Affine rank  \\[.1cm]
\noindent {\bf AMS Mathematics Subject Classification\,(2010):} 05B05, 05B15, 97B05
\end{abstract}

%===================================================================
\section{Introduction}

Let $v$, $k$, $t$ be  positive integers such that $v> k> t$ and $V$ be a $v$-set. Suppose that $T_+$ and $T_-$ are two disjoint collections of $k$-subsets of $V$ (called {\em blocks})  such that
the occurrences of every $t$-subset of $V$ in $T_+$ and $T_-$ are the same. Then $T=(T_+, T_-)$
is called a $t$-$(v, k)$ {\em trade} (or a $t$-{\em trade} when the role of $v$, $k$ is not important).
Basically, $t$-trades have been defined and utilized in connection
with $t$-designs: if $\D_1$ and $\D_2$ are two $t$-designs with the same parameters and the same ground set $V$, then
$(\D_1\setminus\D_2,\, \D_2\setminus\D_1)$ is a $t$-trade.
In this paper we consider $[t]$-{\em trades}, a
 generalization of  $t$-trades,
relaxed in the sense that the block size is not fixed. More precisely, a $[t]$-trade is a pair $T=(T_+, T_-)$ of disjoint collections
of subsets of $V$ such that for every $0\le i\le t$,
every $i$-subset of $V$ is included in the same number
of blocks of $T_+$ and of $T_-$.
Note that any $t$-trade is also an $i$-trade for every $0\le i\le t$, which means that any $t$-trade is a $[t]$-trade as well.
On the other hand, $[t]$-trades can be naturally treated as trades of orthogonal arrays:
given two orthogonal binary arrays $A_1$, $A_2$ with the same parameters and strength $t$, their difference pair
$(A_1\backslash A_2,A_2\backslash A_1)$ is a $[t]$-trade (here, each array is treated as the set of its row-tuples).

For a $[t]$-trade $T=(T_+, T_-)$ we have $|T_+| = |T_-|$ and this common value is called the {\em volume} of $T$ and denoted by $\vol(T)$.
It is known that the smallest volume of a nonempty $t$-trade is $2^t$ which was determined independently in
\cite{Hwang:PhD,Hwang:86} and \cite{FranklPach:83}. For the volumes (of $t$-trades) between $2^t$ and $2\cdot2^t$, it was conjectured by Khosrovshahi and Malik \cite{Khos:1990:trades,Malik:88}  and by Mahmoodian and Soltankhah \cite{Soltankhah:88,MahSol:1992}   (see also \cite{HedKho:trades}) that any volume in this range is of the form
$2^{t+1}-2^i$ for some $i\in\{0,\ldots, t-1\}$. This was known as ``the gaps conjecture'' which was proved recently in \cite{Kro:gaps} for simple trades
(for the trades with repeated blocked, the problem remains open).
 We note that  the spectrum of volumes of $t$-trades and that of $[t]$-trades are the same \cite{Kro:gaps} (i.e., a $t$-trade of volume $m$ exists if and only if a $[t]$-trade of volume $m$ exists).
 This is a key observation which allows one to translate  problems related to the volumes of $t$-trades to the setting of $[t]$-trades;  the strategy which was employed in settling the gaps conjecture for simple trades \cite{Kro:gaps}.
  Further important problems in design theory can be described in terms of volumes of trades.
  For instance, the celebrated halving conjecture \cite{Hartman:halving} can be considered as a partial case of the problem of determining the maximum volume of  $t$-$(v,k)$ trades (which is conjectured to be $\frac{1}{2}\binom{v}{k}$ whenever $\binom{v-i}{k-i}$
is even for all $i=0,\ldots,t$). This is one of the motivations to study $[t]$-trades as a new tool to attack problems in combinatorial design theory which can be  described in terms of (volumes) of $t$-trades.

In this paper we further study $[t]$-trades and their volumes.
As noted in \cite{Kro:gaps}, any simple (i.e., with no repeated blocks) $[t]$-trade corresponds to a Boolean function of degree at most $v-t-1$ (where $v$ is the number of arguments).
From the characterization  of such functions with small number of ones (given in \cite{KasamiTokura:70}),
it is observed that  any simple $[t]$-trade of volume at most $2\cdot2^t$ belongs to one of the two
affine types, called Type\,(A) and Type\,(B) (Type\,(A) $[t]$-trades are known to exist).
Existence of  $[t]$-trades of Type\,(B) was  declared as an open problem in \cite{Kro:gaps}.
 By considering the affine rank, we prove that $[t]$-trades of Type\,(B) do not exist.
Also from our results on affine rank of trades, we derive the spectrum of volumes of trades up to $2.5\cdot 2^t$ extending the gaps conjecture proved in \cite{Kro:gaps}. %We also characterize small $[t]$-trades for $t=1,2$.

The paper is organized as follows.
Section~\ref{s:def} contains main definitions. In Section~\ref{s:prel} we prove some auxiliary statements.
In Section~\ref{s:rank}, we consider the affine rank of %a small (of volume less than $2\cdot 2^t$)
simple $[t]$-trades. We utilize these considerations to prove the non-existence of simple $[t]$-trades of Type\,(B) as well as simple $[t]$-trades of volume $2^{t+1}+2^i$, $(t-1)/2\le i\le t-4$. Based on this latter non-existential result
and the construction of $[t]$-trades of volumes $2^{t+1}+2^{t-1}-2^i$, $0\le i\le t-2$, and
$2^{t+1}+2^{t-1}-3\cdot 2^i$, $0\le i\le t-3$, in  Section~\ref{s:spectr} we characterize
the spectrum of volumes of simple $[t]$-trades up to the value $2.5\cdot 2^{t}$ exclusively. Section~\ref{s:vol3,6} is devoted to the characterization
of $[1]$-trades of volume $3$ and $[2]$-trades of volume $6$.
Section~\ref{s:comp} contains the results of an exhaustive computer enumeration of the equivalence classes of trades for small $t$, and small foundations and volumes.

Finally, we note that our results are applicable to the classical  $t$-trades.
Indeed, on one hand, the $t$-trades are a special case of the $[t]$-trades;
on the other hand, every $[t]$-trade can be mapped to a $t$-trade with a fixed block size by some affine transformation \cite{Kro:gaps}.
However, the characterization results for $[t]$-trades do not imply that
the corresponding $t$-trades are also characterized up to isomorphism.
Indeed, the class of equivalence transformations for $[t]$-trades is larger than that of $t$-trades (it contains shifts), and nonisomorphic $t$-trades could be equivalent as $[t]$-trades.
As an example of the characterization of small $t$-trades, we mention
the classification in \cite[Table~3.4]{FGG:2004:trades} of the Steiner $2$-trades with block size $3$, volume at most $9$ and foundation size at most $11$,
where the additional ``Steiner'' property means that
no pair of elements is included in more than one block of each leg of the trade.

%===================================================================
\section{Definitions}\label{s:def}
\subsection{$[t]$-trades}\label{s:[t]-trades}

Let $t$, $v$ be positive integers with $t<v$.
The subsets of $V = \{ 1, \ldots , v\}$
will be associated with their characteristic $v$-tuples, e.g.,
$\{2, 3, 6\} = (0, 1, 1, 0, 0, 1, 0) = 0110010$ for $v = 7$.
The cardinality of a subset (the number of 1's
in the corresponding tuple) will be referred to as its size.
The set of all subsets of $V$
is denoted by $2^V$,
which forms a group isomorphic to $\mathbb{Z}_2^v$,
with the symmetric difference
as the group operation.
The symmetric difference corresponds to the bitwise modulo-$2$ addition of the characteristic $v$-tuples,
and we will use $\oplus$ as the symbol for this operation.
In many cases, we will omit this symbol% of the group operation of $(2^V,\oplus)$
, i.e., $XY:=X\oplus Y$.
For every $i\in V$, we denote $x_i:=\{i\}$.
Therefore, every
$X=\{i_1,i_2,\ldots,i_w\}\in2^V$ can be written as
$X=x_{i_1}\oplus x_{i_2}\oplus\cdots\oplus x_{i_w}
=x_{i_1}x_{i_2}\ldots x_{i_w}$.

By a \emph{$[t]$-trade} we mean a pair $T=(T_+, T_-)$ of disjoint collections
of $2^{V}$ such that for every $i\in [t]$, $[t]:=\{0,\ldots,t\}$,
every $i$-subset of $V$ is included in the same number
of elements of $T_+$ and of $T_-$.
The sets $T_+$ and $T_-$ are called the \emph{legs} of $T$
and the elements of $T_+$ and $T_-$ are referred to as the {\em blocks} of $T$.
A trade is called  \emph{simple} if it has no repeated blocks;
in that case, $T_+$ and $T_-$ can be considered as ordinary sets.
The
cardinality of a leg (which is, trivially, the same for both legs)
is called the \emph{volume} of $T$, denoted by $\vol(T)$.
  The {\em foundation} of $T$, denoted by $\found(T)$,
  is the set of all $\ell\in V$ such that $\ell$ appears in some blocks of $T$.
For any $\ell\in\found(T)$, the \emph{replication} of $\ell$ is defined as
 $$r_\ell:=|\{B\in T_+: \ell\in B\}|= |\{B\in T_-: \ell\in B\}|.$$
 We use the same notation for the subsets $\alpha\subset\found(T)$ with $|\al|\le t$:
$$r_\al:=|\{B\in T_+: \al\subseteq B\}|=|\{B\in T_-: \al\subseteq B\}|.$$
The trade of volume $0$ is called \emph{void}.
A $[t]$-trade $(T'_+,T'_-)$ is said to be a \emph{$[t]$-subtrade} of a $[t]$-trade $(T_+,T_-)$ if $T'_+\subseteq T_+$ and $T'_-\subseteq T_-$.
An element $\ell$ is said to be \emph{essential} for a trade $T$ if $T$ has a block containing $\ell$ and a block not containing $\ell$.

A trade can be treated as a $\mathbb Z$-valued function over $2^V$, and written as
\begin{equation}\label{eq:pol}
T= \sum_{X\in 2^V} \tau_X X,
\end{equation}
where the positive coefficients $\tau_X$ equal the multiplicity of $X$ in $T_+$,
and the negative coefficients $\tau_X$ equal minus the multiplicity of $X$ in $T_-$.
In terms of such functions, (\ref{eq:pol}),
the definition of a $[t]$-trade can be rewritten as
\begin{equation}\label{eq:def}
 \sum_{X \supseteq S} \tau_X = 0,\qquad \mbox{for every $S\in 2^V$ such that $|S|\le t$.}
\end{equation}
Below, we formally consider summation and multiplication of functions in form (\ref{eq:pol}),
using the rules of the group ring $\mathbb Z[(2^V,\oplus)]$.
This language is convenient for the representation of the trades of small volumes.

A subset $T$ of $2^{V}$ is said to be a \emph{$[t]$-unitrade} if for every subset $S$ of $V$ with $|S|\le t$,
the number of blocks of $T$ including $S$ is an even number.
A $[t]$-unitrade has necessarily an even number of blocks.
If $(T_+, T_-)$ is a simple $[t]$-trade, then clearly $T_+\cup T_-$ is a $[t]$-unitrade.
We extend the definition of volume, foundation and replication to include unitrades $T$
by
$$\vol(T):=|T|/2,~~   r_\ell:=|\{B\in T: \ell\in B\}|/2,$$
and similarly for subsets of $\found(T)$.

\begin{rem}
There is no reason to extend the concept of unitrade to multisets.
Indeed, increasing or decreasing the multiplicity of any block by $2$ does not change the
$[t]$-unitrade property of the multiset.
So, any multiset $M$ is a (generalized) $[t]$-unitrade if and only if
$\mathrm{odd}(M)$ is a $[t]$-unitrade,
where $\mathrm{odd}(M)$
is the set of blocks with odd multiplicity in $M$.
In particular, for any $[t]$-trade $(T_+, T_-)$,
the set $\mathrm{odd}(T_+\uplus T_-)$ is a $[t]$-unitrade, where `$\uplus$' denotes union of multisets.
\end{rem}

%-------------------------------------------------------------------
\subsection{The binary vector space, Boolean functions and polynomials}\label{s:bool}

The set $2^V$ with the addition operation $\oplus$
and the natural scalar multiplication by $0$ and $1$
is a $v$-dimensional vector space over the Galois field $\mathrm{GF}(2)=(\{0,1\},\oplus)$.
Every subset $S$ of $2^V$ can be represented by the characteristic $\{0,1\}$-function over $2^V$
(such functions are known as \emph{Boolean functions}),
which, in  turn, is uniquely represented as a polynomial of degree at most $v$
in the vector coordinates $y_1,\ldots,y_v$ in the standard basis $x_1=\{1\},\ldots,x_v=\{v\}$,
over $\mathrm{GF}(2)$.
We will say that this polynomial is \emph{associated} with the set $S$.

The set of all $\{0,1\}$-functions on $2^V$ represented by polynomials of degree at most $m$
is denoted by $\RM(m,v)$ (in coding theory, this is known as the Reed--Muller code of order $m$).

%===================================================================
\section{Preliminary lemmas}\label{s:prel}

In this section we establish some basic facts about $[t]$-trades which will be used in the rest of the paper. We start with a result
which reveals the connection between $[t]$-trades  and Reed--Muller codes. Consider $f(y_1,\ldots,y_6)=y_1y_2y_3+y_1y_2y_4\in\RM(3,6)$. The set of ones of $f$ is $$T=\{111000,111001,111010,111011,110100,110101,110110,110111\}.$$
It is easily seen that $T$ is indeed a $[2]$-unitrade. This is an example of the following general fact.
\begin{lem}\label{l:unitrade-RM}
The subsets of $2^V$ associated with the polynomials from $\RM(m,v)$, $m<v$,
are exactly the $[t]$-unitrades with $t=v-m-1$.
\end{lem}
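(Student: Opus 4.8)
The plan is to pass entirely to the language of Boolean functions and to compute the $[t]$-unitrade condition as an orthogonality relation. Let $T\subseteq 2^V$ and let $f$ be its characteristic Boolean function, so that ``$T$ is associated with a polynomial of degree at most $m$'' means precisely $f\in\RM(m,v)$. With $t=v-m-1$ (which is nonnegative since $m<v$), I must show that $f\in\RM(m,v)$ if and only if, for every $S\subseteq V$ with $|S|\le t$, the number of blocks of $T$ containing $S$ is even.

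The key observation is that this count is an inner product. For $S\subseteq V$ write $m_S:=\prod_{i\in S}y_i$; then a point $y$ lies in $T$ and contains $S$ exactly when $f(y)=1$ and $m_S(y)=1$, so the number of blocks of $T$ containing $S$ equals $\sum_{y\in 2^V}f(y)\,m_S(y)$. Reducing modulo $2$, the $[t]$-unitrade condition becomes $\langle f,m_S\rangle=0$ over $\mathrm{GF}(2)$ for every $S$ with $|S|\le t$. Since the monomials $m_S$ with $|S|\le t$ form a basis of $\RM(t,v)$, this is equivalent to $f\in\RM(t,v)^{\perp}$. Thus the lemma reduces to the identity $\RM(t,v)^{\perp}=\RM(m,v)$ for $t=v-m-1$.

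To establish this duality I would first record the orthogonality of monomials: $\langle m_A,m_B\rangle=\sum_y\prod_{i\in A\cup B}y_i=2^{\,v-|A\cup B|}\bmod 2$, which vanishes unless $A\cup B=V$. Taking $f\in\RM(m,v)$ and $g\in\RM(t,v)$ and expanding both in monomials, every cross term $\langle m_A,m_B\rangle$ has $|A|\le m$ and $|B|\le t=v-m-1$, hence $|A\cup B|\le|A|+|B|\le v-1<v$ and vanishes; therefore $\RM(m,v)\subseteq\RM(t,v)^{\perp}$. Equality then follows from a dimension count: $\dim\RM(t,v)^{\perp}=2^v-\sum_{i=0}^{t}\binom{v}{i}=\sum_{i=0}^{m}\binom{v}{i}=\dim\RM(m,v)$, where the middle equality uses $t=v-m-1$ and the substitution $i\mapsto v-i$.

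I expect the only real content to be the first two moves: recognizing ``contains $S$'' as the monomial $m_S$, and hence the unitrade condition as orthogonality to $\RM(t,v)$, and then pinning down the index shift $t=v-m-1$. Once the problem is recast as the Reed--Muller duality $\RM(t,v)^{\perp}=\RM(m,v)$, the remaining argument (monomial orthogonality plus a binomial-sum dimension count) is routine. Alternatively, one could avoid invoking duality and argue directly by M\"obius inversion on the lattice $2^V$, showing that the partial sums $\sum_{u\ge w}c_u$ of the algebraic normal form coefficients $c_u$ of $f$ vanish for all $|w|\ge m+1$ if and only if $c_u=0$ for all $|u|\ge m+1$; the triangular structure of this transform gives the equivalence immediately.
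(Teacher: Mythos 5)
Your proof is correct, but it closes the argument by a genuinely different route than the paper. The shared core is the same computation: the number of ones of a monomial $m_A$ that include a set $S$ is $2^{v-|A\cup S|}$, odd exactly when $A\cup S=V$ (this is the paper's part (i) and your monomial orthogonality $\langle m_A,m_B\rangle \equiv 2^{v-|A\cup B|} \pmod 2$), and both proofs get the inclusion ``degree at most $m$ implies $[t]$-unitrade'' by linearity over $\mathrm{GF}(2)$. The divergence is in the converse. The paper argues constructively: given a polynomial of degree $s\ge v-t$, it isolates a monomial $y_{i_1}\cdots y_{i_s}$ of top degree and exhibits the explicit witness $S=V\setminus\{i_1,\ldots,i_s\}$ of size at most $t$, checking that this monomial alone contributes an odd count while every other monomial of degree at most $s$ contributes an even one, so the parity condition fails at $S$. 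You instead recast the unitrade condition as $f\in\RM(t,v)^{\perp}$ and finish non-constructively with the dimension count $\dim\RM(t,v)^{\perp}=2^v-\sum_{i=0}^{t}\binom{v}{i}=\sum_{i=0}^{m}\binom{v}{i}=\dim\RM(m,v)$, i.e., you prove the classical Reed--Muller duality $\RM(v-m-1,v)^{\perp}=\RM(m,v)$ and read the lemma off from it. Your route is the standard coding-theoretic one: it is conceptually clean, packages the statement as a known duality, and generalizes immediately. The paper's route is more elementary and self-contained: it needs neither the non-degeneracy of the bilinear form on $\mathrm{GF}(2)^{2^V}$ nor the binomial-sum identity, and it produces an explicit subset on which the unitrade property fails, which a dimension count cannot provide. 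One point you should make explicit rather than implicit: both $\dim\RM(t,v)=\sum_{i=0}^{t}\binom{v}{i}$ and the step $\dim C^{\perp}=2^v-\dim C$ rest on the uniqueness of the algebraic normal form and the non-degeneracy of the standard inner product; these are standard facts (the former is asserted in Section~\ref{s:bool} of the paper), so this is a matter of citation, not a gap.
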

\begin{proof} We divide the argument in three parts.

(i) Consider a monomial
$f=y_{i_1}\cdots y_{i_\ell}$, and let $T$ be the set of ones of $f$.
Given a subset $S$ of $V$, we count the number of the members of $T$ `including' $S$  (in terms of tuples, having $1$'s in all positions from $S$).
For a binary vector $\ba=a_1\ldots a_v$, we have $f(\ba)=1$ and $\ba$ includes $S$ if and only if $a_i=1$ for all $i\in S \cup \{i_1,\ldots,i_\ell\}$.
So the number of members of $T$ including $S$ is $2^{|V \setminus\left(S \cup \{i_1,\ldots,i_\ell\}\right)|}$.
This number is even if and only if
$V \setminus\left(S \cup \{i_1,\ldots,i_\ell\}\right)$ is nonempty.

(ii) In particular, if $l\le m$, then $2^{|V \setminus\left(S \cup \{i_1,\ldots,i_\ell\}\right)|}$ is even for every $S$ of size $|S|\le t=v-m-1$.
So, for every monomial of degree less than $v-t$,
the associated set is a $[t]$-unitrade.
This extends to every polynomial of degree less than $v-t$ (i.e., at most $m$),
because any linear combination over $\mathrm{GF}(2)$ preserves the parity properties defining a $[t]$-unitrade.

(iii) On the other hand,
if the degree $s$ of a polynomial is $v-t$ or more,
then it includes
some monomial $y_{i_1}\cdots y_{i_s}$ with coefficient $1$
and does not meet the definition of a $[t]$-unitrade
with $S=V\backslash \{i_1, \ldots ,i_s \}$, $|S|\le t$.
Indeed, by the `only if' statement of (i), for this monomial,
the set $T$ of ones has odd number of elements including $S$;
on the other hand,
for every other monomial of degree at most $s$ this number is even, by
the `if' statement of (i); hence, for the whole polynomial, it is odd.
\end{proof}

In view of Lemma~\ref{l:unitrade-RM}, the next claim is just
the well-known fact on Hamming distance of $\RM(m,v)$ (see, e.g., \cite[Theorem~3 in 13.3]{MWS}),
which is easy to prove by induction on $t$.
\begin{lem}\label{l:odd}
If $T$ is a nonempty $[t]$-unitrade, then $|T|\ge2^{t+1}$, i.e., $\vol(T)\ge2^t$.
\end{lem}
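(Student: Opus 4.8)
The plan is to argue by induction on $t$, working directly with unitrades rather than passing through the minimum-distance formula for $\RM(m,v)$ (although, via Lemma~\ref{l:unitrade-RM}, the statement is exactly the assertion that every nonzero codeword of $\RM(v-t-1,v)$ has Hamming weight at least $2^{t+1}$, so one could instead quote that coding-theoretic fact). For the base case $t=0$, a $[0]$-unitrade is simply a subset of $2^V$ of even cardinality, so a nonempty one has at least $2=2^{0+1}$ blocks.

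For the inductive step, I would fix $t\ge1$, assume the bound for $t-1$ over every ground set, and take a nonempty $[t]$-unitrade $T$. Since $T$ has even cardinality and is nonempty, it contains two distinct blocks, which must differ in some coordinate $\ell$; hence $\ell$ is essential, i.e., some block of $T$ contains $\ell$ and some block does not. Splitting $T$ according to this coordinate, let $T_0$ be the blocks avoiding $\ell$ and let $T_1$ be the blocks containing $\ell$ with $\ell$ deleted, both regarded as collections of subsets of $V\setminus\{\ell\}$. Then $|T|=|T_0|+|T_1|$, and essentiality of $\ell$ guarantees that $T_0$ and $T_1$ are both nonempty.

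The key step is to check that $T_0$ and $T_1$ are each $[t-1]$-unitrades on the $(v-1)$-set $V\setminus\{\ell\}$. For any $S\subseteq V\setminus\{\ell\}$ with $|S|\le t-1$, the number of blocks of $T_0$ containing $S$ equals the number of blocks of $T$ containing $S$ minus the number containing $S\cup\{\ell\}$; since $|S|\le t$ and $|S\cup\{\ell\}|\le t$, both of these counts are even, so their difference is even. The count for $T_1$ is handled the same way, as it equals the number of blocks of $T$ containing $S\cup\{\ell\}$. Applying the induction hypothesis to the two nonempty $[t-1]$-unitrades $T_0$ and $T_1$ then gives $|T_0|,|T_1|\ge2^{t}$, whence $|T|\ge2^{t+1}$.

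I expect the only delicate points to be the two facts used to start and to close the induction: that a nonempty (simple) unitrade always admits an essential coordinate --- which follows from its even, hence $\ge 2$, cardinality --- and the parity bookkeeping showing that restricting along an essential coordinate drops the strength by exactly one while keeping both pieces nonempty. Once these are in place the argument is purely an additive count, with no further computation required.
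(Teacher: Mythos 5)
Your proof is correct, but it takes a genuinely different route from the paper's. The paper does not argue from scratch: via Lemma~\ref{l:unitrade-RM} it identifies nonempty $[t]$-unitrades with nonzero codewords of $\RM(v-t-1,v)$ and simply cites the well-known minimum distance $2^{t+1}$ of that code (see \cite{MWS}), remarking only that the fact is ``easy to prove by induction on $t$.'' You instead supply a self-contained combinatorial induction: the base case $t=0$ is the evenness of $|T|$; for $t\ge 1$ you pick an essential coordinate $\ell$ (which exists because $|T|$ is even and nonzero, hence $T$ has two distinct blocks), split $T$ into the blocks avoiding $\ell$ and the blocks containing $\ell$ with $\ell$ deleted, verify by parity bookkeeping that each half is a $[t-1]$-unitrade on $V\setminus\{\ell\}$ (both counts involved are even since $|S|\le t$ and $|S\cup\{\ell\}|\le t$), and use essentiality to see both halves are nonempty, giving $|T|\ge 2^t+2^t$. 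All steps check out. In effect you have written out the induction the paper only gestures at: your coordinate splitting is the unitrade-language version of the $(u\,|\,u+w)$ decomposition behind the classical Reed--Muller distance proof, but it is cleaner here, since the strength parameter drops by exactly one on both halves and no case distinction (as in the usual RM argument, where one branch may vanish) is needed. What the paper's approach buys is brevity and the explicit link to coding theory, which it exploits later through the Kasami--Tokura characterization; what yours buys is a proof needing neither the external citation nor even the Boolean-function correspondence of Lemma~\ref{l:unitrade-RM}.
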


The same bound holds for $[t]$-trades.  The following lemma gives the structure of $[t]$-trades with the minimum volume. A version of this result for $t$-trades is quite well-known, but it can be easily generalized to $[t]$-trades.

\begin{lem}[\cite{FranklPach:83,Hwang:86}]\label{l:min}
The minimum volume of a non-void $[t]$-trade is $2^t$.
Every $[t]$-trade of volume $2^t$ has the form
$$ X_0(X_1-Y_1)(X_2-Y_2) \cdots (X_{t+1}-Y_{t+1}), $$
where $X_0,X_1, \ldots ,X_{t+1},Y_1, \ldots ,Y_{t+1}$
are pairwise disjoint subsets of $V$
and $X_iY_i$ is nonempty for every $i=1, \ldots ,t+1$.
\end{lem}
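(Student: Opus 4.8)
The plan is to induct on $t$. For $t=0$, the defining equation \eqref{eq:def} with $S=\emptyset$ reads $\sum_X\tau_X=0$, so a nonvoid $[0]$-trade is a pair of distinct blocks $A\neq B$; writing $X_0=A\cap B$, $X_1=A\setminus B$, $Y_1=B\setminus A$ gives $A-B=X_0(X_1-Y_1)$ with $X_1\oplus Y_1=A\oplus B\neq\emptyset$, which is the claimed form of volume $1=2^0$.

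For the inductive step I would fix a nonvoid $[t]$-trade $T=\sum_X\tau_X X$ and an essential element $i$ (one exists: the $S=\emptyset$ equation forces both legs nonempty, and two blocks from different legs differ in some coordinate). Split $T$ according to $i$ by setting $P=\sum_{X\ni i}\tau_X(X\setminus\{i\})$ and $Q=\sum_{X\not\ni i}\tau_X X$, both in $\mathbb{Z}[2^{V\setminus\{i\}}]$; note $T=x_iP+Q$. Testing \eqref{eq:def} on $S\subseteq V\setminus\{i\}$ with $i\in S$, say $S=S'\cup\{i\}$, reduces to $\sum_{Y\supseteq S'}P_Y=0$ for $|S'|\le t-1$, so $P$ is a $[t-1]$-trade; the equations with $i\notin S$ reduce to $\sum_{Y\supseteq S}(P+Q)_Y=0$ for $|S|\le t$, so $P+Q$ is a $[t]$-trade and hence $Q=(P+Q)-P$ is a $[t-1]$-trade. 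As $i$ is essential, some block contains $i$ and some avoids it, so $P$ and $Q$ are both nonvoid; since $X\mapsto X\setminus\{i\}$ is injective on the blocks through $i$, no cancellation occurs and $\|P\|_1=\sum_{X\ni i}|\tau_X|$, $\|Q\|_1=\sum_{X\not\ni i}|\tau_X|$. A nonvoid $[t-1]$-trade balances its legs ($\|\cdot\|_1=2\vol$), so by the induction hypothesis $\vol(T)=\tfrac12(\|P\|_1+\|Q\|_1)=\vol(P)+\vol(Q)\ge 2\cdot 2^{t-1}=2^t$, proving the bound.

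Now assume $\vol(T)=2^t$. I would first record that a minimum trade is simple: $\mathrm{odd}(T_+\uplus T_-)$ is a $[t]$-unitrade, and were it void then every $\tau_X$ would be even and $T/2$ a nonvoid $[t]$-trade of volume $2^{t-1}$, contradicting the bound; so it is nonvoid and Lemma~\ref{l:odd} forces at least $2^{t+1}=\|T\|_1$ odd-multiplicity blocks, whence every $\tau_X\in\{-1,0,1\}$. Equality also gives $\vol(P)=\vol(Q)=2^{t-1}$, so the induction hypothesis yields disjoint product forms $P=A_0\prod_{j=1}^t(A_j-B_j)$ and $Q=C_0\prod_{j=1}^t(C_j-D_j)$.

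It remains to assemble these into a single product for $T=x_iP+Q$, and this is where the real work lies. If $P+Q=\emptyset$, then $Q=-P$ and $T=(x_i-\emptyset)P$, already the desired form with $X_{t+1}=\{i\}$, $Y_{t+1}=\emptyset$ (disjoint from $\found(P)$ since $i\notin V\setminus\{i\}$). Otherwise $P+Q$ is a nonvoid minimum $[t]$-trade, hence simple by the argument above, which forces $\mathrm{supp}(P)$ and $\mathrm{supp}(Q)$ to be disjoint, so $\mathrm{supp}(T)$ is a disjoint union of two translated cosets. The crux is to show $P$ and $Q$ are built on the same $t$ directions, i.e.\ $\{A_j\oplus B_j\}=\{C_j\oplus D_j\}$. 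For this I would compute the degree-$t$ moments $\sum_{Y\supseteq S}P_Y$ for $|S|=t$: for a disjoint product they vanish unless $S$ is a transversal of the sets $e_j:=A_j\cup B_j$ (meeting each $e_j$ in exactly one point), with sign recording the $A_j/B_j$ side. Since $P+Q$ has strength $t$, the degree-$t$ moments of $P$ and $Q$ are negatives of each other, so the two systems have the same transversals; as a transversal family determines its underlying partition, $\{e_j\}=\{C_j\cup D_j\}$, and the sign comparison (whose forced global alternation rules out a same-orientation gluing) pins the splits together. Consequently $P$ and $Q$ are translates, up to a leg-swap, of one common minimum $[t-1]$-trade $M_0=\prod_{j=1}^t(X_j-Y_j)$, say $P=A_0M_0$ and $Q=-C_0M_0$, whence $T=(x_iA_0-C_0)M_0$; factoring the common part out of the binomial $x_iA_0-C_0$ produces the last factor $(X_{t+1}-Y_{t+1})$ with $X_{t+1}\oplus Y_{t+1}\neq\emptyset$ and exhibits $T$ in the required form. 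I expect this alignment step—proving the two halves share their directions and splits—to be the main obstacle, the preceding steps being essentially routine.
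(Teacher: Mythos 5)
Your proof is correct, but there is nothing in the paper to compare it against: Lemma~\ref{l:min} is quoted from \cite{FranklPach:83,Hwang:86}, accompanied only by the remark that the generalization from $t$-trades to $[t]$-trades is easy, and no proof appears in the text. What you have written is a valid, self-contained proof in the paper's own formalism. The volume bound is the classical inductive argument (and is exactly the decomposition the paper records in Lemma~\ref{l:sub} and Corollary~\ref{c:decomp}): split $T=x_iP+Q$ at an essential element $i$, check that $P$, $Q$, $P+Q$ are trades of the appropriate strengths, and add volumes. The structural half is where you contribute the real mechanism, and it checks out: for a product trade $A_0\prod_{j}(A_j-B_j)$, the degree-$t$ moments $\sum_{Y\supseteq S}P_Y$ with $|S|=t$ factor as $\prod_j\bigl([S\cap e_j\subseteq A_j]-[S\cap e_j\subseteq B_j]\bigr)$, hence are $\pm1$ exactly on transversals of the parts $e_j=A_j\cup B_j$ and $0$ elsewhere; the strength-$t$ property of $P+Q$ makes the moment functions of $P$ and $Q$ negatives of each other; and a family of transversals determines its underlying partition (two points lie in the same $e_j$ iff no transversal contains both). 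This forces $\{e_j\}=\{C_j\cup D_j\}$, and then $T=(x_iA_0-C_0)M_0$ gives the product form after factoring the binomial, with $i\in X_{t+1}\oplus Y_{t+1}$ guaranteeing the last factor is nondegenerate.

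Three small repairs. First, in the base case, ``a nonvoid $[0]$-trade is a pair of distinct blocks'' should read ``a $[0]$-trade of volume $1$.'' Second, the one asserted-rather-than-argued step is ``the sign comparison \ldots pins the splits together'': what is needed is that $g_j(s):=\epsilon_j(s)\delta_j(s)$ (the product of the $P$-side and $Q$-side signs of a point $s\in e_j$) is constant on each $e_j$, which follows by varying a single entry of a transversal in the identity $\prod_j g_j(s_j)=-1$; the product of these constants is then $-1$, so the number of orientation flips is odd and $Q=-C_0M_0$ as you claim. With that line inserted the argument is complete. Third, the simplicity detour via $\mathrm{odd}(T_+\uplus T_-)$ and Lemma~\ref{l:odd}, and the disjointness of the supports of $P$ and $Q$, are correct but never used afterwards---the moment argument needs neither---so they can be dropped.
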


For $Y\in 2^V$ and a function $T:2^V\to \mathbb Z$,
we call $Y T$ the \emph{$Y$-shift},
or simply a \emph{shift} of $T$.

\begin{example}\label{ex:1}
The function
$$ x_1x_2x_3((1-x_1)(1-x_2)+(1-x_1x_2)(1-x_3)) = 1-x_1x_2-x_2x_3-x_1x_3+2x_1x_2x_3.$$
is a $[1]$-trade of volume $3$.
The left part of the equation represents the trade
as the sum of two simple $[1]$-trades of volume $2$
shifted by $Y=\{1,2,3\}$.
\end{example}

\begin{lem}[\cite{Kro:gaps}]\label{l:shift}
Any shift of a $[t]$-trade is also a $[t]$-trade.
\end{lem}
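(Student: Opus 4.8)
The plan is to verify the defining equations (\ref{eq:def}) directly for the shifted function. Write $T=\sum_{X\in 2^V}\tau_X X$, so that the $Y$-shift is $YT=\sum_{X}\tau_X (Y\oplus X)=\sum_{Z}\tau_{Y\oplus Z}\,Z$; that is, $YT$ has coefficients $\tau'_Z:=\tau_{Y\oplus Z}$. By (\ref{eq:def}) it suffices to show that $\sum_{Z\supseteq S}\tau'_Z=0$ for every $S\in 2^V$ with $|S|\le t$.

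First I would rewrite the index set. Substituting $X=Y\oplus Z$ (a bijection of $2^V$), we get $\sum_{Z\supseteq S}\tau'_Z=\sum_{Z\supseteq S}\tau_{Y\oplus Z}=\sum_{X}\tau_X$, where $X$ now ranges over those sets with $Y\oplus X\supseteq S$. In coordinates, $Y\oplus X$ has a $1$ in position $i$ exactly when $X$ and $Y$ differ there, so the condition $Y\oplus X\supseteq S$ says that $X$ contains every $i\in S\setminus Y$ and avoids every $i\in S\cap Y$. Thus the sum runs over all $X$ with $S\setminus Y\subseteq X$ and $X\cap(S\cap Y)=\emptyset$.

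The key step is to remove the ``avoidance'' constraints by inclusion--exclusion over $S\cap Y$: writing $A=S\setminus Y$ and $B=S\cap Y$, the sum equals $\sum_{W\subseteq B}(-1)^{|W|}\sum_{X\supseteq A\cup W}\tau_X$, using the standard identity that $\sum_{W\subseteq B}(-1)^{|W|}[A\cup W\subseteq X]=[A\subseteq X]\,[X\cap B=\emptyset]$. Here $A$ and $B$ are disjoint with $A\cup B=S$, so every set $A\cup W$ has size at most $|S|\le t$; hence each inner sum vanishes because $T$ itself satisfies (\ref{eq:def}). Therefore the whole expression is $0$, which is exactly the condition for $YT$ to be a $[t]$-trade.

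I expect the only real obstacle to be conceptual rather than computational: a shift does not merely permute the defining conditions of (\ref{eq:def}), because the containment requirements $Z\supseteq S$ turn into requirements that force certain coordinates to be $0$ rather than $1$. Recognizing that these transformed conditions are integer linear combinations of the original ``$\supseteq$'' conditions of size at most $t$ is the crux, and inclusion--exclusion makes this explicit. Equivalently, one may note that the $Y$-shift corresponds to the affine coordinate substitution $x_i\mapsto 1\oplus x_i$ for $i\in Y$, which preserves the property of having degree at most $t$; invoking the reformulation of (\ref{eq:def}) as orthogonality of $T$ to every monomial $\prod_{i\in S}y_i$ with $|S|\le t$ then yields the same conclusion.
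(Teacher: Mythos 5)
Your proof is correct. Note, however, that the paper offers no proof of this lemma at all: it is imported verbatim from \cite{Kro:gaps}, so there is no internal argument to compare against; your write-up supplies what the paper omits. Your route is the natural one given the paper's own formalism: represent the trade as a $\mathbb Z$-valued function (\ref{eq:pol}), observe that the $Y$-shift just relabels coefficients via $\tau'_Z=\tau_{Y\oplus Z}$, and then check condition (\ref{eq:def}) for the shifted function. The one genuinely nontrivial point --- that the shifted containment condition ``$Y\oplus X\supseteq S$'' is not itself of the form ``$X\supseteq S'$'' but forces some coordinates of $X$ to be $0$ --- is exactly the crux, and your inclusion--exclusion identity
$$[A\subseteq X]\,[X\cap B=\emptyset]=\sum_{W\subseteq B}(-1)^{|W|}\,[A\cup W\subseteq X],\qquad A=S\setminus Y,\ B=S\cap Y,$$
resolves it correctly, since every $A\cup W\subseteq S$ has size at most $t$ and so each inner sum vanishes by (\ref{eq:def}) applied to $T$. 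Two minor remarks: first, in the functional formulation the disjointness of the legs and the equality $|T_+|=|T_-|$ of the shifted object are automatic (each block carries a single signed coefficient, and the case $S=\emptyset$ of (\ref{eq:def}) gives equal leg sizes), so nothing further needs checking; second, your closing observation that the shift is the affine substitution $y_i\mapsto 1\oplus y_i$ ($i\in Y$) and that the test monomials of degree at most $t$ are carried to integer combinations of such monomials is the same inclusion--exclusion in disguise, and it is the viewpoint consistent with the Reed--Muller perspective of Lemma~\ref{l:unitrade-RM} --- with the caveat that the trade condition is a $\mathbb Z$-linear (not $\mathrm{GF}(2)$-linear) condition, which your expansion over $\mathbb Z$ handles properly.
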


Given a trade $T$
in the form (\ref{eq:pol})
and an element $i\in V$,
by the \emph{$i$-projection},
or simply a \emph{projection},
of $T$ we  mean
the function $T^i$ obtained from
$T$ by removing $i$ from every block that contains $i$.
Hence, $T^i=P+P'$, where
$T=P+x_iP'$ and $i$ does not occur in $P$ and $P'$.

Note that after a projection, it is possible that two blocks cancel out each other,
 so the volume can be reduced. If the volume of $T$
equals the volume of $T^i$,
then we  say that
$T$ is an \emph{extension} of $T^i$.
So, an extension of a $[t]$-trade $T$
is a $[t]$-trade obtained from $T$
by including a new element in some blocks of $T$.

\begin{example}\label{ex:2}
The following simple $[1]$-trade
is an extension of the $[1]$-trade from Example~\ref{ex:1}:
$$1-x_1x_2-x_2x_3+x_1x_2x_3-x_1x_3\underline{x_4}+x_1x_2x_3\underline{x_4}.$$
\end{example}

The following four lemmas are straightforward from the definitions.

\begin{lem}\label{l:proj}
A projection of a $[t]$-trade is a $[t]$-trade. \end{lem}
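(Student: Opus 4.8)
The plan is to verify the defining identity~(\ref{eq:def}) directly for the projected function. Writing $T = P + x_iP'$ with $i$ occurring in neither $P$ nor $P'$, we have $T^i = P + P'$, so every block of $T^i$ avoids the element $i$, and the coefficient $\tau'_X$ of a block $X$ with $i\notin X$ in $T^i$ is $\tau_X + \tau_{X\cup\{i\}}$, where the $\tau$'s are the coefficients of the original $T$. The goal is then to show $\sum_{X\supseteq S}\tau'_X = 0$ for every $S\in 2^V$ with $|S|\le t$.

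First I would dispose of the easy case $i\in S$: since no block of $T^i$ contains $i$, no block satisfies $X\supseteq S$, so the sum is empty and equals $0$. The main case is $i\notin S$ with $|S|\le t$, where the relevant sum is $\sum_{X\supseteq S,\, i\notin X}(\tau_X + \tau_{X\cup\{i\}})$. Reindexing the second summand by $Y:=X\cup\{i\}$ turns $\sum_{X\supseteq S,\, i\notin X}\tau_{X\cup\{i\}}$ into $\sum_{Y\supseteq S\cup\{i\}}\tau_Y$, i.e., the sum over precisely those supersets of $S$ that contain $i$. Combined with the first summand $\sum_{X\supseteq S,\, i\notin X}\tau_X$, this reproduces exactly the partition of the supersets of $S$ into those avoiding and those containing $i$. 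Hence the projection sum equals $\sum_{X\supseteq S}\tau_X$, which vanishes because $T$ is a $[t]$-trade and $|S|\le t$.

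Both cases yield $\sum_{X\supseteq S}\tau'_X = 0$ for all $S$ with $|S|\le t$, so $T^i$ satisfies~(\ref{eq:def}) and is a $[t]$-trade. I do not expect any genuine obstacle here: the only care required is in tracking the coefficients through the decomposition $T=P+x_iP'$ and in the reindexing step $Y=X\cup\{i\}$, which is consistent with the lemma being billed as straightforward from the definitions.
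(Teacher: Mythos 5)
Your proof is correct, and it follows exactly the route the paper intends: the paper states this lemma (together with Lemmas~\ref{l:sub}--\ref{l:dbl}) as ``straightforward from the definitions,'' and your argument is precisely that direct verification of~(\ref{eq:def}) for $T^i$, splitting into the cases $i\in S$ and $i\notin S$ and reindexing by $Y=X\cup\{i\}$. The coefficient bookkeeping $\tau'_X=\tau_X+\tau_{X\cup\{i\}}$ is right and automatically handles the possible cancellation of blocks under projection, so nothing is missing.
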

%\begin{proof}{
%Straightforward.
%}\end{proof}

\begin{lem}\label{l:sub}
Let $T = P+x_i P'$ be a $[t]$-trade,
where $i$ does not occur in the blocks of $P$, $P'$.
Then $P$, $P'$, and $x_i P'$ are $[t-1]$-trades.
\end{lem}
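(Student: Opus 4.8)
The plan is to verify each of $P$, $P'$, and $x_iP'$ directly against the functional definition of a trade, equation~(\ref{eq:def}), organizing the defining equations of $T$ according to whether the test set $S$ contains $i$. Write $P=\sum_{X\not\ni i}p_X\,X$ and $P'=\sum_{X\not\ni i}p'_X\,X$, so that the blocks of $T$ avoiding $i$ carry the coefficients $p_X$, while the blocks of $T$ containing $i$ are exactly the sets $X\cup\{i\}$ with $i\notin X$ and carry the coefficients $p'_X$. For a subset $S$ with $|S|\le t$, I split the left-hand side of~(\ref{eq:def}) into the contribution of the blocks avoiding $i$ and that of the blocks containing $i$.

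First I would treat $P'$. Taking a test set of the form $S\cup\{i\}$ with $i\notin S$ and $|S|\le t-1$ (so that $|S\cup\{i\}|\le t$), only the blocks of $T$ containing $i$ can include it, so~(\ref{eq:def}) for $T$ reduces to $\sum_{X\supseteq S}p'_X=0$. This is precisely condition~(\ref{eq:def}) for $P'$ at the set $S$. For the remaining test sets of size at most $t-1$, namely those that contain $i$, the condition holds trivially, since no block of $P'$ contains $i$ and the corresponding sum is empty. Hence $P'$ is a $[t-1]$-trade. Because $x_iP'$ is by definition the $x_i$-shift of $P'$, Lemma~\ref{l:shift} immediately yields that $x_iP'$ is a $[t-1]$-trade as well; alternatively one checks the two cases $i\in S$ and $i\notin S$ of~(\ref{eq:def}) for $x_iP'$ by hand.

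It remains to handle $P$. For a test set $S$ with $i\notin S$ and $|S|\le t$, both the blocks avoiding $i$ and those containing $i$ can include $S$ (a block $X\cup\{i\}$ includes $S$ iff $X\supseteq S$, as $i\notin S$), so~(\ref{eq:def}) for $T$ becomes $\sum_{X\supseteq S}p_X+\sum_{X\supseteq S}p'_X=0$. Restricting to $|S|\le t-1$, the second sum vanishes because $P'$ has just been shown to be a $[t-1]$-trade, leaving $\sum_{X\supseteq S}p_X=0$. As before, the test sets of size at most $t-1$ that contain $i$ impose no constraint on $P$, since $P$ has no block containing $i$. Therefore $P$ is a $[t-1]$-trade.

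The argument is entirely a matter of bookkeeping, and I expect no genuine obstacle. The only point needing care is the correspondence between the blocks $X\cup\{i\}$ of $x_iP'$ and the sets $X$ of $P'$ when matching them against a prescribed test set $S$, together with the routine observation that the instances of~(\ref{eq:def}) indexed by sets containing $i$ are automatically satisfied by the $i$-free functions $P$ and $P'$.
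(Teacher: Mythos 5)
Your proof is correct and is essentially the argument the paper has in mind: the paper states this lemma without proof as ``straightforward from the definitions,'' and your case analysis on whether the test set $S$ contains $i$, applied to equation~(\ref{eq:def}), is exactly that straightforward verification (with the legitimate use of Lemma~\ref{l:shift}, which precedes this lemma, for the $x_iP'$ part). No gaps; the ordering of your steps (establish $P'$ first, then use it to isolate the condition for $P$) is the natural bookkeeping the authors left to the reader.
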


\begin{lem}\label{l:[1]}
 If $(T_+,T_-)$ is a $[1]$-trade, then $\bigoplus_{X\in T_+\cup T_-} X = \emptyset$.
\end{lem}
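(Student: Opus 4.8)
The plan is to reduce the claim to an element-by-element parity count in the group $(2^V,\oplus)\cong\mathbb{Z}_2^v$. Recall that in a symmetric difference $\bigoplus_X X$ of a collection of subsets of $V$, an element $\ell\in V$ survives (i.e., lies in the result) precisely when $\ell$ is contained in an \emph{odd} number of the blocks $X$, counted with multiplicity. So it suffices to show that every $\ell\in V$ is contained in an even number of the blocks of $T_+\cup T_-$.

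To do this I would invoke the defining property of a $[1]$-trade for the singleton subsets $S=\{\ell\}$, which have size $|S|=1\le t=1$ and are thus covered by the trade condition. By definition this says exactly that $\ell$ is included in the same number of blocks of $T_+$ as of $T_-$, and that common number is the replication $r_\ell$. Consequently the number of blocks of $T_+\cup T_-$ containing $\ell$ equals $r_\ell+r_\ell=2r_\ell$, which is even. Since this holds for every $\ell\in V$, no element survives and $\bigoplus_{X\in T_+\cup T_-}X=\emptyset$.

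The only point requiring a little care is the treatment of multiplicities: if $T$ is not simple, $T_+\cup T_-$ must be read as a multiset union and all occurrences counted with multiplicity. However, the equality of replication numbers in the two legs supplied by the $[1]$-trade condition already holds with multiplicities, so the parity argument goes through verbatim; there is no genuine obstacle, which is why this lemma is listed as straightforward from the definitions.
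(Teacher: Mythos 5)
Your proof is correct and is exactly the intended argument: the paper offers no explicit proof (it lists this lemma among those ``straightforward from the definitions''), and your parity count via $r_\ell$ for each singleton $\{\ell\}$ is precisely that straightforward derivation, with the multiset subtlety handled appropriately.
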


\begin{lem}\label{l:dbl}
If $P$ is a $[t-1]$-trade and
the element $i$ does not occur in its blocks, then $(1-x_i)P$ is a $[t]$-trade.
\end{lem}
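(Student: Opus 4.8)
The plan is to verify the defining condition \eqref{eq:def} directly for the function $(1-x_i)P$. Writing $P=\sum_{X}\tau_X X$ with every block $X$ avoiding $i$, I would first expand $(1-x_i)P = P - x_i P$; since multiplying by $x_i$ simply inserts $i$ into each block, the coefficient of a block $Z$ in $(1-x_i)P$ is $\tau_Z$ when $i\notin Z$ and $-\tau_{Z\setminus\{i\}}$ when $i\in Z$. The task is then to show that the sum of these coefficients over all $Z\supseteq S$ vanishes for every $S$ with $|S|\le t$.

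I would split into two cases according to whether $i\in S$. When $i\notin S$, each block $X$ of $P$ contributes $+\tau_X$ through the block $X$ (which contains $S$ exactly when $X\supseteq S$) and $-\tau_X$ through the block $X\cup\{i\}$ (which still contains $S$ because $i\notin S$); these cancel termwise, so the sum is $0$ irrespective of any trade property of $P$. This case is pure bookkeeping.

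The case $i\in S$ is where the hypothesis on $P$ is used. Here only blocks containing $i$, i.e. blocks of the form $X\cup\{i\}$, can contain $S$, and $X\cup\{i\}\supseteq S$ is equivalent to $X\supseteq S\setminus\{i\}$. Hence the relevant sum equals $-\sum_{X\supseteq S\setminus\{i\}}\tau_X$. Since $|S|\le t$ forces $|S\setminus\{i\}|\le t-1$, the $[t-1]$-trade condition \eqref{eq:def} applied to $P$ gives $\sum_{X\supseteq S\setminus\{i\}}\tau_X=0$, so this sum vanishes as well.

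Combining the two cases shows that \eqref{eq:def} holds for $(1-x_i)P$ at every $S$ of size at most $t$, which is precisely the assertion that $(1-x_i)P$ is a $[t]$-trade. There is no genuine obstacle here; the only point worth flagging is the index shift from $t-1$ to $t$, which is handled exactly by the reduction $|S\setminus\{i\}|\le t-1$ in the second case.
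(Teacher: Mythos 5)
Your proof is correct: the case split on whether $i\in S$, with termwise cancellation in the first case and the $[t-1]$-trade condition applied to $S\setminus\{i\}$ in the second, is exactly the routine verification intended here. The paper gives no proof at all — it lists this among four lemmas that are ``straightforward from the definitions'' — so your argument simply spells out the intended direct check of condition (\ref{eq:def}).
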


We  say that a $[t]$-trade is $s$-\emph{small}
for some $s>1$ if its volume is less than $s\cdot 2^t$.
The $2$-small trades will be referred to as \emph{small}.

The following  statement
plays an important role
in the computer-aided classification
of small $[t]$-trades.
\begin{cor}\label{c:decomp}
For each $i$ from $V$, every $[t]$-trade $T$
is decomposable to the sums
\begin{eqnarray}
T &=& x_i T^i+(1-x_i)P \label{eq:t+P} \\
&=&     T^i-(1-x_i)P' \label{eq:t+P'},
\end{eqnarray}
where $T^i$ is a $[t]$-trade,
$P$ and $P'$ are $[t-1]$-trades,
and the element $i$ does not occur
in $T^i$, $P$, $P'$.
Moreover,
if $T$ is an $s$-small $[t]$-trade for some $s$,
then $T^i$
is an $s$-small $[t]$-trade
and one of $P$, $P'$
is an $s$-small $[t-1]$-trade.
\end{cor}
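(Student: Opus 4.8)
The plan is to read off both decompositions from the single splitting of $T$ according to whether a block contains $i$. I would write $T=P+x_iP'$, where $P$ collects (with their signed multiplicities) exactly the blocks of $T$ not containing $i$, and $P'$ is obtained from the blocks containing $i$ by deleting $i$; thus neither $P$ nor $P'$ involves $i$, and these are precisely the $P,P'$ from the definition of projection recalled before the corollary, so that $T^i=P+P'$. The two claimed identities then become immediate substitutions: $x_iT^i+(1-x_i)P=x_i(P+P')+P-x_iP=P+x_iP'=T$, giving (\ref{eq:t+P}), and $T^i-(1-x_i)P'=(P+P')-(P'-x_iP')=P+x_iP'=T$, giving (\ref{eq:t+P'}).

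For the structural assertions I would simply invoke the lemmas already established. Lemma~\ref{l:proj} gives that $T^i$ is a $[t]$-trade, and Lemma~\ref{l:sub}, applied to the decomposition $T=P+x_iP'$, gives that $P$ and $P'$ are $[t-1]$-trades; the fact that $i$ occurs in none of $T^i,P,P'$ is built into their construction.

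The remaining ``moreover'' part is a short volume count. Since projection can only merge or cancel blocks, $\vol(T^i)\le\vol(T)$, so if $T$ is $s$-small then so is $T^i$ as a $[t]$-trade. For $P$ and $P'$, observe that the blocks occurring in $P$ (none containing $i$) and those occurring in $x_iP'$ (all containing $i$) are disjoint as subsets of $2^V$, so no cancellation occurs between the two families and $\vol(T)=\vol(P)+\vol(x_iP')$; moreover the $x_i$-shift is a multiplicity-preserving bijection on blocks, whence $\vol(x_iP')=\vol(P')$ and therefore $\vol(T)=\vol(P)+\vol(P')$. If both $P$ and $P'$ had volume at least $s\cdot2^{t-1}$, then $\vol(T)\ge s\cdot2^t$, contradicting the $s$-smallness of $T$; hence at least one of them has volume below $s\cdot2^{t-1}$, i.e.\ is an $s$-small $[t-1]$-trade.

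There is no serious obstacle in this argument; the only step that requires a little care is the volume bookkeeping, specifically the clean additivity $\vol(T)=\vol(P)+\vol(P')$, which rests on the disjointness of the two block families and on the $x_i$-shift preserving multiplicities, together with the observation that projection never increases volume.
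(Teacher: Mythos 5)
Your proof is correct and follows essentially the same route as the paper: split $T=P+x_iP'$ by occurrence of $i$, set $T^i=P+P'$, invoke Lemmas~\ref{l:sub} and~\ref{l:proj} for the trade properties, and conclude the ``moreover'' part from $\vol(T^i)\le\vol(T)$ together with the additivity $\vol(T)=\vol(P)+\vol(P')$. The only difference is that you spell out the algebraic verification of the two identities and the volume bookkeeping, which the paper treats as immediate.
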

\begin{proof}{
If we present the $[t]$-trade
in the form $T= P+x_iP'$
and define
$T^i=P+P'$
to be the $i$-projection
of $T$,
then the first statement
trivially follows from
Lemmas~\ref{l:sub} and~\ref{l:proj}.
The volume of the projection
is trivially not greater than
the volume of the original trade;
so, if $T$ is $s$-small
then so is $T^i$.
Moreover, the volume of $T$
is the sum
of the volumes of $P$ and $P'$;
so, if it is less than $s\cdot 2^{t}$,
then one of the summands
is less than $s\cdot 2^{t-1}$,
which means that
the corresponding $[t-1]$-trade
is $s$-small.}
\end{proof}

As mentioned before, the minimum distance of $\RM(m,v)$ is $d=2^{v-m}$.  Kasami and Tokura \cite{KasamiTokura:70} characterized codewords of $\RM(m,v)$
with weight at most $2d$. This result is the base of our characterization of $[t]$-trades with small volumes.
\begin{lem}[\cite{KasamiTokura:70}] \label{kasami}
Any Boolean function $f$ from $\RM(m,v)$ of weight greater than $2^{v-m}$
and less than $2\cdot2^{v-m}$ can be reduced by an invertible affine transformation of its variables
to one of the following forms:
\begin{align*}
 f(y_1,\ldots, y_v)& = y_1 \cdots y_{m-\mu} \cdot (y_{m-\mu+1} \cdots y_m \oplus y_{m+1} \cdots y_{m+\mu}),\tag{A}\\
 f(y_1,\ldots, y_v)& = y_1 \cdots y_{m-2} \cdot (y_{m-1}\cdot y_m \oplus y_{m+1}\cdot y_{m+2} \oplus \cdots \oplus y_{m+2\nu-3}\cdot y_{m+2\nu-2}),\tag{B}
\end{align*}
where $v\ge m + \mu$, $m\ge\mu\ge2$, $v\ge m+ 2\nu-2 $ and $\nu\ge3$.
Any Boolean function
from $\RM(m,v)$ of minimum nonzero weight, $2^{v-m}$, is the characteristic function of a
$(v-m)$-dimensional affine subspace of $2^V$.
\end{lem}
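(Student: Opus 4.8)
The plan is to prove the Kasami--Tokura classification by induction on the number $v$ of variables, treating all orders $m<v$ at once, with the minimum-weight (flat) characterization serving both as the base anchor and as a tool inside the induction.

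First I would fix the degree. If $f\neq 0$ had degree $r<m$, then $f\in\RM(r,v)$, whose minimum nonzero weight is $2^{v-r}\ge 2^{v-m+1}=2\cdot2^{v-m}$, contradicting $\wt(f)<2\cdot2^{v-m}$. Hence every $f$ in the stated range has degree exactly $m$. The final sentence of the lemma (that $\wt(f)=2^{v-m}$ forces $f$ to be the indicator of a $(v-m)$-flat, i.e.\ a product of $m$ affinely independent linear forms) is the classical lowest-weight theorem for Reed--Muller codes; I would take it as the base of the induction, noting it is itself provable by the same coordinate split, which forces a product structure.

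Next comes the recursion. Choosing a variable $y_v$, write $f=g\oplus y_v h$ with $g,h$ not involving $y_v$, so $g,\,g\oplus h\in\RM(m,v-1)$, $h\in\RM(m-1,v-1)$, and, counting the ones on the hyperplanes $y_v=0$ and $y_v=1$,
$$\wt(f)=\wt(g)+\wt(g\oplus h).$$
If $h=0$ then $f$ is independent of $y_v$ and I recurse in $v-1$ variables. If $g=0$ (or symmetrically $g\oplus h=0$), then $f=y_v h$ with $h\in\RM(m-1,v-1)$ of weight in $[2^{v-m},\,2\cdot2^{v-m})=[2^{(v-1)-(m-1)},\,2\cdot2^{(v-1)-(m-1)})$; by induction $h$ is a flat or of Type\,(A)/(B) of order $m-1$, and multiplying by the fresh variable $y_v$ lengthens the leading monomial by one, sending a flat to a flat and a Type-(A)/(B) form of order $m-1$ to one of order $m$ with the same parameter $\mu$ or $\nu$. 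These cases reduce cleanly.

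The crux is the remaining case $g\neq 0$, $g\oplus h\neq 0$, $h\neq 0$. The budget $\wt(g)+\wt(g\oplus h)<2\cdot2^{v-m}=4\cdot2^{(v-1)-m}$ together with the bound $\wt\ge 2^{(v-1)-m}$ for each summand forces at least one of $g,\,g\oplus h$ into the low range $[2^{(v-1)-m},\,2\cdot2^{(v-1)-m})$, where the induction hypothesis describes it as a flat or a Type-(A)/(B) form. The main obstacle is the gluing: one must run through the combinations (flat, flat), (flat, second-class), (second-class, second-class) for the pair $(g,\,g\oplus h)$, exploit that $h=g\oplus(g\oplus h)$ has degree only $m-1$ to force the leading parts to align, and then produce a single invertible affine change of variables landing $f$ in exactly one of (A) or (B). The delicate points are (i) showing no other normal form survives, (ii) tracking the parameters across the transition between Type\,(A) (first arising at $\mu=2$, weight $\tfrac32\cdot2^{v-m}$) and Type\,(B) (first at $\nu=3$, weight $\tfrac74\cdot2^{v-m}$), and (iii) checking the constraints $m\ge\mu\ge2$ and $\nu\ge3$. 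The anchor for this case analysis is the order-two situation $m=2$, where $f$ is a quadratic form and the statement is the classical GF$(2)$ classification: $\wt(f)<2\cdot2^{v-2}$ holds exactly for the hyperbolic forms $\bigoplus_{i=1}^{h}y_{2i-1}y_{2i}$ of rank $2h\ge4$, which are Type\,(A) for $h=2$ and Type\,(B) for $h\ge3$.
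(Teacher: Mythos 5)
The first thing to note is that the paper does not prove this lemma at all: it is imported verbatim from Kasami--Tokura \cite{KasamiTokura:70}, so your attempt cannot be compared with an internal argument; it has to stand on its own as a proof of that theorem, and it does not. Your setup is fine --- the degree argument, the flat characterization at minimum weight, and the reductions when $h=0$ or $g=0$ (where $f=y_vh$ and multiplication by a fresh variable preserves the normal forms) are all correct and standard. But the entire content of the theorem sits in the case you label ``the crux,'' and there you only announce a plan: you list the pairs (flat, flat), (flat, second-class), (second-class, second-class), say one ``must'' align the leading parts using $\deg h\le m-1$, and assert that a single affine change of variables then lands $f$ in (A) or (B), with uniqueness of the normal form and the parameter bookkeeping flagged as ``delicate points.'' None of this is carried out, and it is precisely the part that makes \cite{KasamiTokura:70} a substantial paper rather than an exercise.

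Moreover, the case analysis as set up cannot be completed with the stated induction hypothesis. From $\wt(g)+\wt(g\oplus h)<4\cdot 2^{(v-1)-m}$ and the minimum-distance bound you only get that \emph{one} of $g$, $g\oplus h$ lies in the inductive window $[2^{(v-1)-m},\,2\cdot 2^{(v-1)-m})$; the other may have any weight up to nearly $3\cdot 2^{(v-1)-m}$, a range in which your induction hypothesis says nothing about its structure. So the three combinations you enumerate are not exhaustive: the generic situation is (classified piece, unclassified piece), and handling it requires either a strengthened hypothesis (e.g., structural information on codewords up to $2.5$ times the minimum weight, as in \cite{KTA:76}) or a genuinely different decomposition --- this is exactly the difficulty the original proof is organized around. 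As it stands, the proposal is a reasonable outline of how one might begin, but it has a genuine gap at the step where the theorem is actually proved; for the purposes of this paper, citing \cite{KasamiTokura:70}, as the authors do, is the correct move.
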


Based on Lemma~\ref{l:unitrade-RM} and the Kasami--Tokura characterization, the gaps conjecture
was proved in \cite{Kro:gaps} in the more general setting of $[t]$-unitrades.
For future reference,  we state it as the following lemma.

\begin{lem} \label{gaps} If $T$ is a nonempty $[t]$-unitrade  with $\vol(T)<2^{t+1}$, then
$$\vol(T)\in\left\{2^t,\left(2-\frac{1}{2}\right)2^t,\ldots,\left(2-\frac{1}{2^t}\right)2^t\right\}.$$
In particular, the same holds for simple $[t]$-trades.
\end{lem}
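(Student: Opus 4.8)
The plan is to exploit the dictionary between $[t]$-unitrades and Reed--Muller codewords furnished by Lemma~\ref{l:unitrade-RM}, and then to read off the admissible volumes directly from the Kasami--Tokura classification in Lemma~\ref{kasami}. Concretely, set $m=v-t-1$, so that a nonempty $[t]$-unitrade $T$ is exactly the set of ones of a nonzero Boolean function $f\in\RM(m,v)$, and its cardinality equals the Hamming weight $\wt(f)=|T|=2\vol(T)$. The minimum distance of $\RM(m,v)$ is $d=2^{v-m}=2^{t+1}$. By Lemma~\ref{l:odd} we have $\vol(T)\ge 2^t$, hence $|T|\ge 2^{t+1}=d$; and the hypothesis $\vol(T)<2^{t+1}$ gives $|T|<2^{t+2}=2d$. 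Thus $f$ is a codeword whose weight lies in the range $d\le\wt(f)<2d$, which is precisely the range covered by Lemma~\ref{kasami}.

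First I would dispose of the boundary case $\wt(f)=d$. Here $f$ has minimum nonzero weight, so by the last assertion of Lemma~\ref{kasami} its set of ones is a $(v-m)=(t+1)$-dimensional affine subspace, whence $|T|=2^{t+1}$ and $\vol(T)=2^t$; this accounts for the first entry of the claimed spectrum. In the remaining range $d<\wt(f)<2d$, Lemma~\ref{kasami} tells us that, after an invertible affine change of variables (which preserves $\wt(f)$, and hence $\vol(T)$), $f$ has one of the two normal forms (A) or (B). It then suffices to compute the weight of each normal form and to translate it into a volume.

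The core of the argument is these two weight counts. For Type\,(A), the prefix $y_1\cdots y_{m-\mu}$ forces those $m-\mu$ coordinates to equal $1$, while the bracket $y_{m-\mu+1}\cdots y_m\oplus y_{m+1}\cdots y_{m+\mu}$ is a sum of two products on disjoint blocks of $\mu$ variables; it equals $1$ exactly when precisely one of the two products is $1$, giving $2(2^\mu-1)$ solutions in those $2\mu$ coordinates, the remaining $v-m-\mu$ coordinates being free. Hence $\wt(f)=2(2^\mu-1)\cdot 2^{v-m-\mu}=2^{t+2}-2^{t-\mu+2}$, so $\vol(T)=2^{t+1}-2^{t-\mu+1}$. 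For Type\,(B) the quadratic part is a sum of $\nu$ products of disjoint variable pairs, i.e.\ a symplectic (bent) form on $2\nu$ variables, whose number of ones is $2^{2\nu-1}-2^{\nu-1}$; combined with the forced prefix and the $v-m-2\nu+2$ free coordinates this yields $\wt(f)=(2^{2\nu-1}-2^{\nu-1})\cdot 2^{v-m-2\nu+2}=2^{t+2}-2^{t-\nu+2}$, so $\vol(T)=2^{t+1}-2^{t-\nu+1}$. In both cases the volume has the shape $2^{t+1}-2^{j}$.

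It remains to check that the exponent $j$ always lands in $\{0,1,\ldots,t-1\}$, which is where the side conditions of Lemma~\ref{kasami} enter. For Type\,(A) one has $j=t-\mu+1$: the constraint $\mu\ge2$ gives $j\le t-1$, while $v\ge m+\mu$ together with $m=v-t-1$ forces $\mu\le t+1$, hence $j\ge0$. For Type\,(B) one has $j=t-\nu+1$: here $\nu\ge3$ gives $j\le t-2$, and $v\ge m+2\nu-2$ gives $\nu\le(t+3)/2$, so $j\ge(t-1)/2\ge0$; thus the Type\,(B) exponents form a subset of the admissible range. Collecting the three cases, $\vol(T)\in\{2^t\}\cup\{2^{t+1}-2^{j}:0\le j\le t-1\}$, which is exactly the asserted set $\{(2-2^{-i})2^t:0\le i\le t\}$ after writing $i=t-j$ (the minimum-weight case being $i=0$). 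The final sentence is then immediate: if $(T_+,T_-)$ is a simple $[t]$-trade, then $T_+\cup T_-$ is a $[t]$-unitrade of the same volume, so the spectrum just obtained applies verbatim. The only genuinely substantive step is the pair of weight computations---in particular recognizing the Type\,(B) quadratic as a bent form with exactly $2^{2\nu-1}-2^{\nu-1}$ ones; everything else is bookkeeping with the affine constraints.
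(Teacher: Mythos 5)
Your proof is correct and follows essentially the same route as the paper: the paper states Lemma~\ref{gaps} by citing \cite{Kro:gaps}, whose argument is exactly the dictionary of Lemma~\ref{l:unitrade-RM} combined with the Kasami--Tokura classification, which is what you carry out. Your weight counts for the two normal forms also agree with the computations the paper itself reproduces in the proof of Proposition~\ref{affine} (your bent-form count $2^{2\nu-1}-2^{\nu-1}$ is the closed form of the paper's binomial sum), so there is nothing to add.
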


\begin{lem} \label{affinesAREtrades}
Every $(t+1)$-dimensional affine subspace of $2^V$ is a $[t]$-unitrade.
\end{lem}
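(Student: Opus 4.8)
My plan is to reduce the statement to Lemma~\ref{l:unitrade-RM} by exhibiting the characteristic function of the subspace as a polynomial of suitably low degree. First I would write the given subspace as $A=w\oplus W$, where $W$ is a $(t+1)$-dimensional linear subspace of $2^V\cong\mathrm{GF}(2)^v$ and $w\in 2^V$. Since $A$ has codimension $m:=v-t-1$, it is the common solution set of a system of $m$ linearly independent affine-linear equations $\ell_1(y)=c_1,\ldots,\ell_m(y)=c_m$, where each $\ell_i$ is a linear form in $y_1,\ldots,y_v$ and $c_i\in\{0,1\}$.

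Next I would write the characteristic function of $A$ as
$$\chi_A(y)=\prod_{i=1}^{m}\bigl(1\oplus c_i\oplus\ell_i(y)\bigr),$$
noting that the $i$-th factor equals $1$ exactly when $\ell_i(y)=c_i$. Each factor is affine-linear, so the product has degree at most $m=v-t-1<v$, whence $\chi_A\in\RM(m,v)$. Lemma~\ref{l:unitrade-RM} then says that the set associated with such a polynomial is exactly a $[t]$-unitrade for $t=v-m-1$, which is precisely our value of $t$; this would finish the argument.

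As a self-contained alternative (should one wish to avoid the Reed--Muller machinery) I would argue directly from the definition: for any $S\subseteq V$ with $|S|\le t$, the blocks of $A$ containing $S$ are the points of $A$ lying in the affine subspace $\{a\in 2^V:a_j=1\text{ for all }j\in S\}$ of dimension $v-|S|$. By the standard bound on intersections of affine subspaces, this intersection is either empty or an affine space over $\mathrm{GF}(2)$ of dimension at least $(t+1)+(v-|S|)-v=t+1-|S|\ge 1$; in either case its cardinality is even, which is exactly the defining condition for a $[t]$-unitrade.

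I do not expect a serious obstacle here; indeed the statement is essentially the converse of the final sentence of Lemma~\ref{kasami}, which identifies the minimum-weight codewords of $\RM(m,v)$ with the characteristic functions of $(v-m)$-dimensional affine subspaces. The only steps needing genuine care are the degree count in the first approach (confirming that the product of $m$ affine-linear factors stays within degree $m$) and, in the second approach, the correct use of the dimension bound for an intersection of affine subspaces together with the fact that over $\mathrm{GF}(2)$ any affine subspace of positive dimension has even size.
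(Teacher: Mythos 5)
Both of your arguments are correct, and they play different roles relative to the paper. Your ``self-contained alternative'' is essentially word-for-word the paper's own proof: the paper intersects $A$ with the affine subspace $\{(y_1,\ldots,y_v): y_{i_1}=\cdots=y_{i_r}=1\}$ and uses the same dimension bound $(v-r)+(t+1)-v\ge 1$ to conclude that the intersection is empty or of even cardinality. Your primary approach, writing $\chi_A(y)=\prod_{i=1}^{m}\bigl(1\oplus c_i\oplus\ell_i(y)\bigr)$ as a product of $m=v-t-1$ affine-linear factors, is a genuinely different route: the degree count $\deg\chi_A\le m$ places $\chi_A$ in $\RM(m,v)$, and the lemma then falls out of Lemma~\ref{l:unitrade-RM} immediately (this also recovers, in the easy direction, the final sentence of Lemma~\ref{kasami} identifying minimum-weight codewords of $\RM(m,v)$ with characteristic functions of $(v-m)$-dimensional affine flats, as you note). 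What each buys: the polynomial route makes the lemma a two-line corollary of machinery the paper has already established, but the parity counting is then hidden inside the proof of Lemma~\ref{l:unitrade-RM}; the paper's argument (your alternative) is self-contained and purely linear-algebraic, needing only that a nonempty intersection of affine subspaces of dimensions $d_1,d_2$ in a $v$-dimensional space has dimension at least $d_1+d_2-v$, and that any positive-dimensional affine space over $\mathrm{GF}(2)$ has even size. Both the degree count and the dimension bound are handled correctly in your write-up, so there is no gap in either version.
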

\begin{proof}
Let $A$ be a $(t+1)$-dimensional affine subspace of $2^V$.
Let $\{i_1,\ldots,i_r\}\subset V$ with $r\le t$.
Consider the $(v-r)$-dimensional affine subspace
$W=\{(y_1,\ldots,y_v): y_{i_1}=\cdots=y_{i_r}=1\}$.
Then $W\cap A$ is either empty or it is  an affine subspace of $2^V$
of dimension at least
%$m-r+v \ge 1$
$(v-r)+(t+1)-v \ge 1$
and so it has an even cardinality.
Considering the vectors of $A$ as subsets of $V$, this means that $\{i_1,\ldots,i_r\}$ is contained in an even number of blocks of $A$.
\end{proof}

\begin{lem} \label{unitrade} If $T$ is a nonempty $[t]$-unitrade, then $\langle T\rangle\setminus T$ is also a $[t]$-unitrade, where $\langle T\rangle$ denotes the affine span of $T$.
\end{lem}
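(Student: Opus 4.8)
The plan is to deduce the statement from two ingredients: that the affine span $\langle T\rangle$ is itself a $[t]$-unitrade, and that the $[t]$-unitrade property is preserved under set-theoretic complementation inside a larger $[t]$-unitrade. Once both are in hand, the conclusion is immediate.

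First I would record the complementation principle. Since $T\subseteq\langle T\rangle$, for every $S\subseteq V$ the blocks of $\langle T\rangle$ that contain $S$ split into those lying in $T$ and those lying in $\langle T\rangle\setminus T$; hence
$$|\{B\in\langle T\rangle\setminus T:\, S\subseteq B\}| = |\{B\in\langle T\rangle:\, S\subseteq B\}| - |\{B\in T:\, S\subseteq B\}|.$$
If $|S|\le t$ and both $\langle T\rangle$ and $T$ are $[t]$-unitrades, the right-hand side is a difference of two even numbers, hence even; so $\langle T\rangle\setminus T$ is a $[t]$-unitrade. As $T$ is a $[t]$-unitrade by hypothesis, the whole argument reduces to showing that $\langle T\rangle$ is a $[t]$-unitrade.

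The key step is therefore to prove that the affine span is large enough, and this is exactly where the nonemptiness of $T$ enters. By Lemma~\ref{l:odd} a nonempty $[t]$-unitrade satisfies $|T|\ge 2^{t+1}$, and since $T\subseteq\langle T\rangle$ with $|\langle T\rangle| = 2^{\dim\langle T\rangle}$, I obtain $\dim\langle T\rangle\ge t+1$. It then remains to observe that every affine subspace $A$ of dimension at least $t+1$ is a $[t]$-unitrade, which is the natural extension of Lemma~\ref{affinesAREtrades}: its proof goes through verbatim, since for $|S|=r\le t$ the intersection of $A$ with $\{(y_1,\ldots,y_v):\, y_i=1 \text{ for } i\in S\}$ is either empty or affine of dimension at least $\dim A - r\ge(t+1)-t=1$, hence of even cardinality. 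Equivalently, one may partition $A$ into cosets of a fixed $(t+1)$-dimensional subspace and add the counts, or note that the characteristic function of $A$ has degree at most its codimension $v-\dim A\le v-t-1$, so it lies in $\RM(v-t-1,v)$ and Lemma~\ref{l:unitrade-RM} applies.

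The only genuinely load-bearing point is the dimension bound $\dim\langle T\rangle\ge t+1$; everything else is bookkeeping, and I expect no serious obstacle beyond handling the degenerate case $t=v-1$, where $|T|\ge 2^v$ forces $T=2^V$, so that $\langle T\rangle\setminus T=\emptyset$ is void and hence trivially a $[t]$-unitrade.
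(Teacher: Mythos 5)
Your proof is correct and follows essentially the same route as the paper's: both obtain $|T|\ge 2^{t+1}$ from the minimum-volume bound, deduce $\dim\langle T\rangle\ge t+1$, conclude that $\langle T\rangle$ is a $[t]$-unitrade via Lemma~\ref{affinesAREtrades}, and finish by the parity-subtraction argument. If anything, you are slightly more careful than the paper, which applies Lemma~\ref{affinesAREtrades} (stated only for dimension exactly $t+1$) directly to a span of possibly larger dimension, whereas you explicitly note that the argument extends verbatim to any dimension at least $t+1$.
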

\begin{proof}{%
Let $d$ be the dimension of $\langle T\rangle$.
By Lemma~\ref{gaps}, $|T|\ge2^{t+1}$.
Therefore, $d\ge t+1$, and hence by Lemma~\ref{affinesAREtrades},
$\langle T\rangle$ is a $[t]$-unitrade.
It follows that  $\langle T\rangle\setminus T$ is also a $[t]$-unitrade.
}\end{proof}

\begin{lem}\label{T_x}
Let $T=(T_+,T_-)$ be a $[t]$-trade. Let $\alpha,\beta\subset\found(T)$ with $\al\cap\beta=\emptyset$.
Consider
$$
 R^+={\{B\in T_+: \al\subset B,\,\beta\cap B=\emptyset\}},
 \qquad
 R^-={\{B\in T_-: \al\subset B,\,\beta\cap B=\emptyset\}},
 $$
 as multisets.
Then $(R_+,R_-)$ is a $\left(t-|\al|-|\beta|\right)$-trade.
\end{lem}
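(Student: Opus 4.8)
The plan is to verify the defining identity \eqref{eq:def} for the pair $(R^+,R^-)$ directly, by writing the relevant block counts in terms of the replication numbers of the ambient trade $T$ and using inclusion--exclusion to dispose of the avoidance condition imposed by $\beta$. Note first that disjointness of $R^+$ and $R^-$ is inherited from that of $T_+$ and $T_-$, so $(R^+,R^-)$ is a legitimate pair and only the trade identities need checking.

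First I would fix a test set $S\subseteq V$ with $|S|\le t-|\al|-|\beta|$ and reduce the claim to the statement that the number of blocks of $R^+$ containing $S$ equals the number of blocks of $R^-$ containing $S$. A block $B$ of $T_+$ contributes to this count precisely when $\al\cup S\subseteq B$ and $B\cap\beta=\emptyset$, and likewise for $T_-$. To eliminate the avoidance requirement $B\cap\beta=\emptyset$, I would expand it by inclusion--exclusion over the subsets of $\beta$, obtaining
$$
\bigl|\{B\in T_+:\ \al\cup S\subseteq B,\ B\cap\beta=\emptyset\}\bigr|
= \sum_{\gamma\subseteq\beta}(-1)^{|\gamma|}\,r_{\al\cup S\cup\gamma},
$$
with the identical formula holding for $T_-$. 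Since $T$ is a $[t]$-trade, each replication $r_{\al\cup S\cup\gamma}$ takes the same value whether computed from $T_+$ or from $T_-$, so the two sums coincide term by term and the required equality of counts follows.

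The single point demanding care — and the only real obstacle — is that the replication $r_{\al\cup S\cup\gamma}$ is defined, and forced to agree across the two legs, only when $|\al\cup S\cup\gamma|\le t$. Here the hypothesis $\al\cap\beta=\emptyset$ ensures $\gamma\subseteq\beta$ is disjoint from $\al$, and the crude estimate
$$
|\al\cup S\cup\gamma|\ \le\ |\al|+|S|+|\gamma|\ \le\ |\al|+(t-|\al|-|\beta|)+|\beta|\ =\ t
$$
shows the index set never exceeds size $t$. This is exactly the role of the bound $|S|\le t-|\al|-|\beta|$: it keeps every term of the inclusion--exclusion inside the range where the $[t]$-trade property is available, which is what makes the whole argument go through.
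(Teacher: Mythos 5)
Your proof is correct, but it takes a different route from the paper's. The paper disposes of this lemma in two lines: the case $|\al|+|\beta|=1$ is exactly Lemma~\ref{l:sub} (the decomposition $T=P+x_iP'$, where restricting to blocks containing or avoiding a single element loses one degree of the trade property), and the general case follows by induction on $|\al|+|\beta|$, peeling off one element of $\al$ or $\beta$ at a time and applying the single-element case to the trade produced by the inductive hypothesis. You instead give a closed-form, non-inductive argument: fix a test set $S$ with $|S|\le t-|\al|-|\beta|$, convert the avoidance condition $B\cap\beta=\emptyset$ into an alternating sum $\sum_{\gamma\subseteq\beta}(-1)^{|\gamma|}r_{\al\cup S\cup\gamma}$ by inclusion--exclusion, and observe that every index set $\al\cup S\cup\gamma$ has size at most $t$, so each term agrees across the two legs. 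In effect your inclusion--exclusion is the fully unrolled form of the paper's induction. What your version buys is transparency: it is self-contained (no reliance on Lemma~\ref{l:sub}), and it makes completely explicit why the loss is exactly $|\al|+|\beta|$ degrees --- the bound $|S|\le t-|\al|-|\beta|$ is precisely what keeps every inclusion--exclusion term inside the range where the $[t]$-trade property applies, a point the paper's sketch leaves implicit. What the paper's version buys is brevity and reuse of machinery already in place; once Lemma~\ref{l:sub} is available, the induction requires no bookkeeping of signed sums at all. One cosmetic remark: your argument verifies the counting identity for every $S$ with $|S|\le t-|\al|-|\beta|$, i.e., it shows $(R^+,R^-)$ is a $[\,t-|\al|-|\beta|\,]$-trade in the paper's bracket notation, which is what the lemma intends and what its later applications (e.g., in Lemma~\ref{rx>2t-1}) use.
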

\begin{proof}
The case $|\al|+|\beta|=1$ is done by Lemma~\ref{l:sub}.
The general case is proven by induction on $|\al|+|\beta|$.
\end{proof}

We denote the trade $(R_+,R_-)$ of Lemma~\ref{T_x} by $T_{\al\ov\beta}$. In particular, we use the notation $T_i$ for $\al=\{i\}$ and $\beta=\emptyset$  and $T_{\ov j}$ for $\al=\emptyset$ and $\beta=\{j\}$.

We call a $[t]$-trade $T$ {\em reduced} if
$$r_i\le\tfrac{1}{2}\vol(T),\qquad \hbox{for all}~i\in\found(T).$$

\begin{lem}\label{l:reduced}
Every $[t]$-trade can be transformed by some shifts into a reduced $[t]$-trade.
\end{lem}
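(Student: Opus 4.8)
The plan is to use shifts by single elements to ``fix'' each over-replicated coordinate, exploiting the fact that a shift by $x_i=\{i\}$ complements the $i$-th coordinate of every block and therefore acts on the replication numbers in a completely transparent way. First I would record the effect of the $x_i$-shift. It sends each block $B$ to $B\oplus\{i\}$, so it toggles membership of $i$: a block of $T_+$ contains $i$ after the shift if and only if it did not contain $i$ before. Hence the new replication of $i$ equals $\vol(T)-r_i$. On the other hand, for every $j\neq i$ the membership of $j$ in $B$ and in $B\oplus\{i\}$ coincides, so $r_j$ is left unchanged. Thus a single $x_i$-shift replaces $r_i$ by $\vol(T)-r_i$ and fixes all the other replications. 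By Lemma~\ref{l:shift} the result is again a $[t]$-trade, and since $X\mapsto X\oplus\{i\}$ is a bijection of $2^V$ that preserves disjointness of the legs, the volume is unchanged.

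With this observation in hand the construction is immediate. I would set $Y=\{i\in\found(T):r_i>\tfrac12\vol(T)\}$ and consider the $Y$-shift $YT$, which is the composition of the commuting single-element shifts $x_i$, $i\in Y$ (their composite is the shift by $\bigoplus_{i\in Y}\{i\}=Y$). For each $i\in Y$ the replication becomes $\vol(T)-r_i<\tfrac12\vol(T)$, while for each $j\notin Y$ it remains unchanged, hence still $\le\tfrac12\vol(T)$; so $YT$ is reduced. I would also verify that no new over-replicated coordinate is introduced: since $Y\subseteq\found(T)$, any $k\notin\found(T)$ satisfies $k\notin Y$, so its (empty) membership is preserved and $k$ does not enter the foundation, while any coordinate with $r_i=\vol(T)$ simply leaves the foundation after the shift, which only deletes a constraint. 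Consequently the foundation of $YT$ is contained in that of $T$ and every surviving coordinate satisfies the required bound.

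The hard part here is essentially nonexistent: the whole argument rests on the single fact that shifting by $x_i$ affects $r_i$ alone, turning it into $\vol(T)-r_i$. The only two points that deserve a moment of care are that each $r_i$ is well defined, which uses $t\ge1$ so that both legs contain $i$ equally often, and that the $Y$-shift does not enlarge the foundation; both are dispatched by the bookkeeping above. I therefore expect the proof to be short, with the main content being the clean description of how single-element shifts act on the replication vector.
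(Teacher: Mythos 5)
Your proof is correct and follows essentially the same route as the paper: the paper also takes $I=\{i: r_i>\tfrac12\vol(T)\}$, applies the $I$-shift, and notes that each $r_i$ with $i\in I$ becomes $\vol(T)-r_i<\tfrac12\vol(T)$ while all other replications are unchanged. Your additional bookkeeping (decomposing the $I$-shift into commuting single-element shifts and checking that the foundation of the shifted trade does not grow) is sound but only makes explicit what the paper leaves implicit.
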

\begin{proof}
Let $T$ be a $[t]$-trade, and let $I$ consist of all $i$'s such that $r_i > \tfrac{1}{2}\vol(T)$.
In $I\oplus T$,  the $I$-shift of $T$, the replication of $i$ is $\vol(T)-r_i < \tfrac{1}{2}\vol(T)$ for every $i\in I$ (the replications of elements in $V\setminus I$ remains the same).
It follows that $I\oplus T$ is reduced.
\end{proof}

%===================================================================
\section[Affine rank of simple {[t]}-trades]{Affine rank of simple $[t]$-trades} \label{s:rank}

Recall that by Lemma~\ref{l:unitrade-RM}, unsigned simple $[t]$-trades with a foundation of size $v$ can be regraded as codewords of the Reed--Muller code $\RM(v-t-1,v)$.
As given in Lemma~\ref{kasami}, the codewords of Reed--Muller codes with weights at most twice the minimum distance have been characterized in
 \cite{KasamiTokura:70} and subsequently divided into Types (A) or (B).
Accordingly, simple $[t]$-trades (and also $[t]$-unitrades) with volume at most $2^{t+1}$ can be categorized into Types (A) or (B).
Krotov \cite{Kro:gaps} considered this possible dichotomy and put forward the existence of $[t]$-trades of Type\,(B) as an open problem.
In this section we establish some results about the affine rank of trades from which it follows that trades of Type\,(B) do not exist.
In addition, the non-existence of simple $[t]$-trades with volumes $2^{t+1}+2^i$, $(t-1)/2\le i\le t-4$ is also established.

We denote the affine rank (the dimension of the affine span)
of a subset $S$ of the vector space $2^{V}$  by $\af(S)$.
If $T=(T_+,T_-)$ is a simple $[t]$-trade, by $\af(T)$ we mean
$\af(T_+\cup T_-)$.

We first show  how the types of $[t]$-trades can be distinguished by means of their affine rank.
\begin{pro} \label{affine}
Let $T$ be a simple $[t]$-trade with $\vol(T)=2^{t+1}-2^i$ for $i\in\{0,1,\ldots,t-1\}$.
\begin{itemize}
  \item[\rm(i)] If $T$ is of Type\,{\rm(A)}, then $\af(T)=2t+2-i$.
  \item[\rm(ii)] If $T$ is of Type\,{\rm(B)}, then $(t-1)/2\le i\le t-2$ and  $\af(T)=t+3$.
\end{itemize}
In particular, if either $\af(T)\ge t+4$, $i=t-1$ or $i<(t-1)/2$, then $T$ is of Type\,{\rm(A)}.
\end{pro}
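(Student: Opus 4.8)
The plan is to translate the two Kasami--Tokura normal forms (A) and (B) from Lemma~\ref{kasami} directly into affine-rank computations, exploiting that the affine rank $\af(T)=\af(T_+\cup T_-)$ is invariant under invertible affine transformations of the variables. Since Lemma~\ref{kasami} guarantees that a simple $[t]$-trade of volume in the stated range, viewed as a codeword of $\RM(v-t-1,v)$, can be brought by such a transformation to one of the two canonical polynomials with $m=v-t-1$, it suffices to compute $\af$ of the set of ones of each canonical form and to match the volume $2^{v-m}+(\text{weight in excess})$ against $2^{t+1}-2^i$ to read off the relation between $i$ and the shape parameters $\mu$ (Type A) or $\nu$ (Type B).

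For part (i), I would take the Type\,(A) form $f=y_1\cdots y_{m-\mu}\,(y_{m-\mu+1}\cdots y_m \oplus y_{m+1}\cdots y_{m+\mu})$ and first pin down its weight. The factor $y_1\cdots y_{m-\mu}$ forces the first $m-\mu$ coordinates to be $1$, and on the remaining free coordinates the parenthesized factor is the XOR of two monomials on disjoint variable sets, whose set of ones is the symmetric difference of two affine subspaces; counting gives weight $2(2^{v-m}-2^{v-m-\mu})=2^{v-m+1}-2^{v-m-\mu+1}$. Setting this equal to $\vol\cdot 2 = 2(2^{t+1}-2^i)$ with $t=v-m-1$ yields the correspondence $i = v-m-\mu = t+1-\mu$, i.e. $\mu=t+1-i$. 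Then I would compute the affine span of the support: the support lives inside the coordinate flat where the first $m-\mu$ entries are $1$, and within that flat it is the symmetric difference of two ``parallel-in-no-direction'' subcubes, so its affine span has dimension $(v-(m-\mu))$ minus the number of coordinates that are forced-constant across the whole support. The coordinates $y_{m+\mu+1},\ldots,y_v$ (there are $v-m-\mu=t+1-\mu=i$ of them) are free, the two subcube directions contribute $2\mu$ more dimensions, and the affine offset contributes $1$; assembling these should give $\af(T)=2\mu+i+\ldots$, which I would reconcile with the claimed value $2t+2-i$ by substituting $\mu=t+1-i$.

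For part (ii), the analogous computation on the Type\,(B) form $f=y_1\cdots y_{m-2}\,(y_{m-1}y_m\oplus\cdots\oplus y_{m+2\nu-3}y_{m+2\nu-2})$ proceeds by first recalling the Kasami--Tokura weight of a bent-type quadratic in $2\nu$ variables, which is $2^{2\nu-1}\pm 2^{\nu-1}$ scaled by the free directions; matching against $2^{t+1}-2^i$ should force the single value $i$ in terms of $\nu$ together with the constraint $\nu\ge3$, producing the bound $(t-1)/2\le i\le t-2$. The affine rank here is cleaner: the quadratic part in the $2\nu$ variables $y_{m-1},\ldots,y_{m+2\nu-2}$ has full-dimensional support inside its variable block (a non-degenerate quadric spans its ambient $2\nu$-space affinely), the $m-2$ leading coordinates are constant, and the remaining $v-(m+2\nu-2)$ coordinates are free, giving $\af(T)=2\nu+(v-m-2\nu+2)=v-m+2=t+3$, independent of $\nu$. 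The final ``in particular'' clause then follows purely by contraposition: $\af(T)\ge t+4$ rules out (ii) since (ii) forces $\af(T)=t+3$; $i=t-1$ or $i<(t-1)/2$ violates the range $(t-1)/2\le i\le t-2$ required by (ii); hence (i) holds in each case.

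I expect the main obstacle to be the careful affine-rank bookkeeping in part (i): one must correctly identify which coordinates are genuinely forced constant on the \emph{symmetric difference} of the two subcubes (as opposed to on each subcube separately), since a coordinate can be constant on each piece yet vary across their union. The clean way to handle this is to pick an explicit affine basis of the support --- a base point in the support together with a maximal independent set of differences of support points --- rather than arguing dimension-by-coordinate, and I would organize the count around the two subcube directions, the $i$ free coordinates, and the one XOR-offset vector that connects the two subcubes, verifying these are affinely independent and total the claimed rank.
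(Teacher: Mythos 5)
Your proposal is correct in substance and, for part (i), follows essentially the same route as the paper: both reduce to the Kasami--Tokura normal form (affine rank and weight being invariant under invertible affine transformations of the variables), identify the Type\,(A) support as the symmetric difference of two $(t+1)$-dimensional affine subspaces meeting in an $i$-dimensional subspace, and read off $\af(T)=2t+2-i$; the paper states this rank computation in one line, while you propose the explicit-basis bookkeeping that actually justifies it (including the point, which the paper glosses over, that deleting the intersection does not shrink the affine span --- this needs the intersection to have codimension at least $2$ in each subspace, i.e.\ $\mu\ge2$, since over $\mathrm{GF}(2)$ the complement of a codimension-$1$ affine subspace is again an affine subspace). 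Where you genuinely diverge is the rank lower bound in part (ii). The paper gets $\af(T)\ge t+3$ indirectly: if $\af(T)\le t+2$, then by Lemma~\ref{unitrade} the set $\langle T\rangle\setminus T$ would be a nonempty $[t]$-unitrade of volume $2^i<2^t$, contradicting Lemma~\ref{gaps}. You instead argue directly that the set of ones of a nondegenerate quadratic in $2\nu$ variables affinely spans its ambient $2\nu$-space (true for $\nu\ge2$, hence for $\nu\ge3$ here, but it does require a short verification rather than mere recall), so together with the free coordinates the support spans the whole $(t+3)$-dimensional flat. Your argument is more elementary, avoiding Lemmas~\ref{unitrade} and~\ref{gaps} entirely, while the paper's recycles machinery it needs elsewhere anyway; both are sound, and the weight computations and the derivation of the range $(t-1)/2\le i\le t-2$ from $\nu\ge3$ and $v\ge m+2\nu-2$ coincide exactly with the paper's.

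One small correction to your part (i) accounting: since $\af$ is defined as the \emph{dimension} of the affine span, the ``affine offset'' contributes nothing; the $2\mu$ subcube directions plus the $i$ free directions already give $2\mu+i=2t+2-i$ (indeed the support affinely spans the entire flat $\{y_1=\cdots=y_{m-\mu}=1\}$, whose dimension is $2\mu+i$). Adding the extra $1$ you tentatively wrote would yield $2t+3-i$ and contradict the claim, so the final count must be $2\mu+i$.
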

\begin{proof} Let $T'$ denote the corresponding $[t]$-unitrade with $T$. Note that an invertible affine transformation of the variables does not change the affine rank and the cardinality of the set of ones of the polynomials given in Lemma~\ref{kasami}. So we may assume that $T'$ is the set of ones of such polynomials.

(i)
 Considering the associated polynomial of $T'$ given by Lemma~\ref{kasami}\,(A), it is seen that
$T'$ is the symmetric difference of two intersecting affine subspaces of dimension
$t+1$. If the dimension of the intersection is $i$, $0\le i<t$, then the cardinality of $T'$ is $2^{t+2}-2^{i+1}$
and its affine rank is $2t+ 2-i$.

(ii)
 $T'$ is the set of ones of the polynomial given by Lemma~\ref{kasami}\,(B). By a counting argument, we have
\begin{align*}
|T'|&=2^{v-m-2\nu+2}\sum_{j\,{\rm odd}} \binom{\nu}{j} 3^{\nu-j}\\
&=2^{v-m-2\nu+2}\cdot\frac{1}{2}\big((3+1)^\nu-(3-1)^\nu\big)\\
&=2^{t+2}-2^{t+2-\nu}~~~~\hbox{(as $t=v-m-1$).}
\end{align*}
We have $\nu\ge3$ and $v\ge m-2 + 2\nu$, so $3\le \nu\le(t+3)/2$. As $|T'|=2\vol(T)=2^{t+2}-2^{i+1}$, it follows that $i=t+1-\nu$ and thus
 $(t-1)/2\le i\le t-2$.

A unitrade of Type\,(B) is an intersection of an affine subspace of dimension $t+3$ and the set of ones
of a quadratic function. So $\af(T')\le t+3$. If $\af(T')\le t+2$, then by Lemma~\ref{unitrade}, $\langle T'\rangle\setminus T'$
is $[t]$-unitrade with volume $2^i$ for some $0\le i\le t-1$ which is a contradiction to Lemma~\ref{gaps}. It follows that $\af(T')=t+3$.
\end{proof}

From Lemma~\ref{kasami} it is clear that $[t]$-unitrades of Type\,(B) (and so with affine rank $t+3$) do exist. However, we manage to prove that this is not the case for $[t]$-trades. It follows that unitrades of Type\,(B) are not `splittable.'  This means that, although an unsigned $[t]$-trade gives a $[t]$-unitrades, but this is not reversible in general.

\begin{lem}\label{rx>2t-1}
Let $t\ge3$ and $T$ be a simple $[t]$-trade such that for all $i\in\found(T)$, $r_{i}=2^{t-1}$. If $\vol(T)>2^t$, then  $\af(T)\ge t+4$.
\end{lem}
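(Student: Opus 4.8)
The plan is to assume $\af(T)=d\le t+3$ and derive a contradiction. Since $|T_+\cup T_-|=2\vol(T)>2^{t+1}$ and these points fit in the $2^d$-point flat $\langle T\rangle$, we get $2^d>2^{t+1}$, so only $d\in\{t+2,t+3\}$ need be excluded. The case $d=t+2$ is quick: by Lemma~\ref{unitrade}, $\langle T\rangle\setminus(T_+\cup T_-)$ is a $[t]$-unitrade of volume $2^{t+1}-\vol(T)<2^t$, so by Lemma~\ref{l:odd} it is void, i.e.\ $T_+\cup T_-$ is a full $(t+2)$-dimensional flat. But then every essential coordinate lies in exactly half of the $2^{t+2}$ blocks, giving $2r_i=2^{t+1}$, i.e.\ $r_i=2^t\ne 2^{t-1}$, a contradiction.

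The heart of the argument is $d=t+3$. The key structural input is the hypothesis $r_i=2^{t-1}$: writing $T=P_i+x_iP_i'$ with $i$ absent from $P_i,P_i'$, the part $x_iP_i'$ collects exactly the blocks through $i$, so $P_i'$ is a \emph{minimum} simple $[t-1]$-trade (volume $r_i=2^{t-1}$). By Lemma~\ref{l:min} its block set is a $t$-dimensional affine subspace; hence the star $S_i:=\{B\in T_+\cup T_-:i\in B\}$ is an affine $t$-flat of $2^t$ points. I would then pass to affine coordinates on the $(t+3)$-dimensional flat $\langle T\rangle$: restriction of Reed--Muller codes to affine subspaces (Lemma~\ref{l:unitrade-RM} transported to $\langle T\rangle$) shows that $T_+\cup T_-$ is the support of a Boolean function $g$ of degree at most $2$ on $\mathrm{GF}(2)^{t+3}$. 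Each essential coordinate $y_i$ restricts to a nonconstant affine form $\ell_i$ on $\langle T\rangle$, and the flatness of $S_i$ says precisely that $g|_{\ell_i=1}$ is the indicator of a codimension-$2$ flat, i.e.\ a product of two independent affine forms.

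The final step analyses the alternating bilinear form $B$ of $g$, of rank $2h$. Since $g|_{\ell_i=1}$ is a product of two affine forms, its quadratic part has rank $2$, so $\operatorname{rank}(B|_{a_i^\perp})=2$, where $a_i$ is the linear part of $\ell_i$. Using the behaviour of an alternating form under restriction to a hyperplane (the rank drops by $2$ exactly when $a_i\in\operatorname{im}B$, and is otherwise unchanged), the case $2h\ge 6$ is impossible, while $2h=4$ forces $a_i\in\operatorname{im}B$ for every essential $i$. As the essential coordinates separate the points of the $(t+3)$-dimensional $\langle T\rangle$, their linear parts $a_i$ must span the whole $(t+3)$-dimensional dual, whereas $\operatorname{im}B$ has dimension $4$; this yields $t+3\le 4$, contradicting $t\ge 3$. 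The low-rank cases $2h\in\{0,2\}$, in which $g$ is essentially affine or a single product, I would rule out directly from the star size: with $r_i=2^{t-1}$ one has $|S_i|=2r_i=2^t$, yet in these cases every essential star has size $2^{t+1}$ or $3\cdot 2^t$; this is also where $\vol(T)>2^t$ enters, excluding the minimum-weight configuration.

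I expect the main obstacle to be the bookkeeping of the low-rank quadratic cases over $\mathrm{GF}(2)$, where the Arf-type subtleties of quadratic forms make the bare ``rank $2$'' condition insufficient to pin down a flat; these must be closed using the exact star size $2^t$ coming from $r_i=2^{t-1}$ together with $\vol(T)>2^t$. The case $2h=4$ supplies the clean dimension count that drives the conclusion for all $t\ge 3$.
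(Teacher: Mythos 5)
Your proposal is essentially sound, and it takes a genuinely different route from the paper. The paper's proof is a short, purely combinatorial count: from Lemma~\ref{gaps} one gets $\vol(T)\ge 1.5\cdot 2^t$; using $r_i=2^{t-1}$ one finds elements $i,j,k$ with $r_{ij}=r_{ik}=r_{jk}=2^{t-2}$ and $r_{ijk}=2^{t-3}$; inclusion--exclusion then gives $\vol(T_{\ov{ijk}})>2^{t-1}$, so the blocks avoiding all of $i,j,k$ already have affine rank at least $t+1$, and three further blocks, each containing exactly one of $i,j,k$, push the rank to $t+4$. You instead work inside the affine span: you reduce to $\af(T)\in\{t+2,t+3\}$, kill $t+2$ with Lemma~\ref{unitrade}, Lemma~\ref{l:odd} and the replication count, and in dimension $t+3$ exploit that $r_i=2^{t-1}$ together with Lemma~\ref{l:min} and simplicity makes every star a $t$-flat, that $T_+\cup T_-$ is the support of a quadratic function $g$ on $\mathrm{GF}(2)^{t+3}$ (the same normalization as in the proof of Lemma~\ref{rem:t-4}), and then run a symplectic rank analysis. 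Your hyperplane-restriction dichotomy (the rank of the bilinear form $B$ drops by $2$ exactly when the linear part $a_i$ lies in $\operatorname{im}B$) is correct, your separation argument showing that the $a_i$, $i\in\found(T)$, span the full $(t+3)$-dimensional dual is correct, and the cases $\operatorname{rank}(B)\ge 4$ and $\operatorname{rank}(B)=0$ are complete. What your approach buys is structural insight -- it shows directly why a quadratic (Type\,(B)-like) configuration cannot carry these replications, and in fact it works already for $t\ge2$; what it costs is much heavier machinery than the paper's half-page count.

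The one genuine flaw is in your rank-$2$ case. Up to affine equivalence a rank-$2$ quadratic is $y_1y_2$, $y_1y_2\oplus 1$, or $y_1y_2\oplus y_3$, and your claim that in the low-rank cases ``every essential star has size $2^{t+1}$ or $3\cdot 2^t$'' fails for the third type: if $a_i$ lies in the coset $a_{y_3}+\operatorname{span}(a_{y_1},a_{y_2})$, then on the hyperplane $\{\ell_i=1\}$ one can substitute $y_3$ by an affine combination of $y_1,y_2$, so $g|_{\ell_i=1}$ becomes a product of two independent affine forms (for a suitable constant), and the star is a genuine $t$-flat of size exactly $2^t$. Hence the ``exact star size $2^t$'' that your last paragraph relies on cannot close this subcase. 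It is closed, however, by the same spanning argument you already use for rank $4$: all admissible $a_i$ are confined to that coset, whose span is at most $3$-dimensional, while the $a_i$ must span $t+3\ge 6$ dimensions -- a contradiction. With that replacement (and with $y_1y_2$ excluded by $\vol(T)>2^t$ and $y_1y_2\oplus1$ excluded by star size, as you say), your proof is complete.
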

\begin{proof}{ Suppose that $\vol(T)>2^t$. So by Lemma~\ref{gaps},  $\vol(T)\ge1.5\cdot2^t$.
For any $i \in \found(T)$, $T_{i}$ is a $[t-1]$-trade of volume $r_{i}$.
Choose $i , j \in \found(T)$ so that $r_{i j}\not\in\{0,2^{t-1}\}$.
Then $r_{i j}=2^{t-2}$. As $T_{i j}$ is a $[t-2]$-trade of minimum volume and $t\ge3$, there exists some
 $k\in\found(T)$ with $r_{i j k}=2^{t-3}$.
 It turns out that $r_{i k},r_{j k}\not\in\{0,2^{t-1}\}$ and so
 $r_{i k}=r_{j k}=2^{t-2}$.
Then
\begin{align*}
\vol(T_{\ov{i j k}})&=\vol(T)-\vol(T_{i})-\vol(T_{j})-\vol(T_{k})+\vol(T_{i j})+\vol(T_{i k})+\vol(T_{j k})-\vol(T_{i j k})\\
&\ge1.5\cdot2^t-3\cdot2^{t-1}+3\cdot2^{t-2}-2^{t-3}= 1.25\cdot 2^{t-1} >2^{t-1}.
\end{align*}
It follows that $T_{\ov{i j k}}$ has affine rank at least $t+1$. On the other hand, as
$\vol(T_{i\ov{j k}})=\vol(T_{j\ov{i k}})=\vol(T_{k\ov{i j}})=2^{t-3}\ne0$,
there are three more affinely independent vectors in $T$ each containing exactly one of $i$, $j$ or $k$.
This means that the affine rank of $T$ is at least $t+4$.
}\end{proof}

\begin{lem}\label{rem:t-4}
Let $T$ be a $[t]$-unitrade.
\begin{itemize}
 \item[\rm (i)] If
$\vol(T)=2^{t+1}\pm 2^i$, $(t-1)/2 \le i \le t-1$, and $\af(T)=t+3$,
then the associated polynomial
corresponding to $T$ can be obtained from
  \begin{equation}\label{fTypeB}
f(y_1,\ldots,y_v)= y_1\cdots y_{m-2}\cdot (y_{m-1}\cdot y_m \oplus y_{m+1}\cdot y_{m+2} \oplus \cdots \oplus y_{m+2\nu-3}\cdot y_{m+2\nu-2}\oplus a)
\end{equation}
  by an invertible affine transformation of variables,
  where $m=v-t-1$,  $\nu=t+1-i$, $a=1$
  if $\vol(T)=2^{t+1}+2^i$
  and $a=0$ if $\vol(T)=2^{t+1}-2^i$.

 \item[\rm (ii)]
 If $2^{t+1}<\vol(T)<2^{t+1}+2^{t-3}$, then $\vol(T)=2^{t+1}+2^i$, for some $i$,
 $(t-1)/2 \le i \le t-4$,
the associated polynomial to $T$ is
of the form \eqref{fTypeB} with $a=1$,
and $\af(T)=t+3$.
\end{itemize}
\end{lem}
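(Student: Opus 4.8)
The device I would use throughout is complementation inside the affine span. If $\af(T)=t+3$, then $\langle T\rangle$ is a $(t+3)$-dimensional affine subspace, hence a $[t]$-unitrade by Lemma~\ref{affinesAREtrades}, and by Lemma~\ref{unitrade} the set $\bar T:=\langle T\rangle\setminus T$ is again a $[t]$-unitrade with $\vol(\bar T)=2^{t+2}-\vol(T)$. Because the indicator of a set difference inside $\langle T\rangle$ is the $\oplus$-sum of the two indicators, the polynomials of $T$ and $\bar T$ differ exactly by the indicator of $\langle T\rangle$; passing to $\bar T$ therefore switches the constant $a$ between $0$ and $1$ in \eqref{fTypeB} and turns $\vol=2^{t+1}+2^i$ into $\vol=2^{t+1}-2^i$. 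Thus the ``$+$'' and ``$-$'' cases are complementary, and the ``$-$'' case always lives in the range $\vol<2^{t+1}$ where Lemma~\ref{kasami}, Proposition~\ref{affine} and Lemma~\ref{gaps} apply.

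For part (i) I would first settle $\vol(T)=2^{t+1}-2^i$ directly. Its weight $2^{t+2}-2^{i+1}$ lies strictly between $2^{t+1}$ and $2^{t+2}$, so Lemma~\ref{kasami} applies. If $i\le t-2$, a Type\,(A) unitrade would have affine rank $2t+2-i\ge t+4$ by Proposition~\ref{affine}(i); since $\af(T)=t+3$, $T$ must be Type\,(B), i.e.\ of the form \eqref{fTypeB} with $a=0$, and the weight count of Proposition~\ref{affine}(ii) forces $\nu=t+1-i$. The boundary $i=t-1$ gives $\nu=2$, where \eqref{fTypeB} with $a=0$ coincides with the Type\,(A) polynomial for $\mu=2$ (whose affine rank is $t+3$), so the same conclusion holds. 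For $\vol(T)=2^{t+1}+2^i$ I would pass to $\bar T$, of volume $2^{t+1}-2^i<2^{t+1}$; a short argument shows $\af(\bar T)=t+3$ (were it $\le t+2$, then $\langle\bar T\rangle\setminus\bar T$ would be a nonempty $[t]$-unitrade of volume $2^i<2^t$, contradicting Lemma~\ref{l:odd}). The already-proved ``$-$'' case puts $\bar T$ into the form \eqref{fTypeB} with $a=0$, say with quadratic part $Q$, in coordinates where $\langle T\rangle=\langle\bar T\rangle=\{y_1=\dots=y_{m-2}=1\}$ has indicator $y_1\cdots y_{m-2}$; complementing back, the indicator of $T$ becomes $y_1\cdots y_{m-2}\oplus y_1\cdots y_{m-2}Q=y_1\cdots y_{m-2}(Q\oplus1)$, which is \eqref{fTypeB} with $a=1$.

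Part (ii) reduces to part (i) once $\af(T)=t+3$ is known. The bound $\af(T)\ge t+3$ is immediate, since $2^{\af(T)}\ge|T|=2\vol(T)>2^{t+2}$. Granting $\af(T)\le t+3$, complementation yields $\bar T$ with $\vol(\bar T)=2^{t+2}-\vol(T)\in(2^{t+1}-2^{t-3},\,2^{t+1})$, so Lemma~\ref{gaps} gives $\vol(\bar T)=2^{t+1}-2^i$ with $2^i<2^{t-3}$, i.e.\ $i\le t-4$, whence $\vol(T)=2^{t+1}+2^i$. As $i\le t-4<t-1$, Proposition~\ref{affine} excludes Type\,(A) for $\bar T$, so $\bar T$ is Type\,(B) and $(t-1)/2\le i$; the ``$-$'' case of part (i) and complementing back then give \eqref{fTypeB} with $a=1$ and $\af(T)=t+3$, as claimed, with $(t-1)/2\le i\le t-4$.

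The crux is thus the upper bound $\af(T)\le t+3$ in part (ii), i.e.\ excluding $\af(T)\ge t+4$ from the volume hypothesis alone. This is genuinely delicate: a crude ``rank forces volume'' estimate fails, because Type\,(A) unitrades of affine rank $t+4$ already have volume $2^{t+1}-2^{t-2}<2^{t+1}$, so one must exploit $\vol(T)>2^{t+1}$, which discards the whole Type\,(A) range. My plan is to restrict $T$ to its affine span --- legitimate since, after an affine change placing $\langle T\rangle=\{y_{\af(T)+1}=\dots=y_v=0\}$, the polynomial of $T$ is divisible by $\prod_{j>\af(T)}(1\oplus y_j)$, which drops its degree by the codimension and realizes $T$ as a $[t]$-unitrade in $\af(T)$ variables --- so that we may assume $\af(T)=v\ge t+4$ with every coordinate essential. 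After reducing by shifts (Lemma~\ref{l:reduced}), each section is a nonempty $[t-1]$-unitrade, giving $2^{t-1}\le r_\ell\le\vol(T)-2^{t-1}$, while $r_{\ell\ell'}$ and $r_{\ell\ell'\ell''}$ are each $0$ or at least $2^{t-2}$ and $2^{t-3}$ respectively by Lemma~\ref{l:odd}. Running the inclusion--exclusion of Lemma~\ref{rx>2t-1} over suitably chosen coordinates should then force $\vol(T)\ge2^{t+1}+2^{t-3}$, contradicting the hypothesis; choosing those coordinates and showing the accumulated excess reaches the tight margin $2^{t-3}$ is the main technical obstacle.
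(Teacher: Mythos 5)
Your part~(i) is correct, and it takes a genuinely different route from the paper. The paper handles both signs at once and never invokes the Kasami--Tokura dichotomy there: after an affine change making $\langle T\rangle=\{y_1=\cdots=y_{m-2}=1\}$, the associated polynomial factors as $y_1\cdots y_{m-2}\cdot h(y_{m-1},\ldots,y_v)$, Lemma~\ref{l:unitrade-RM} forces $\deg h\le 2$, and since the weight $2^{t+2}\pm2^{i+1}$ of $h$ is not exactly $2^{t+2}$, the classification of quadratic Boolean functions \cite{SloBer:1970} puts $h$ in the form $y_{m-1}y_m\oplus\cdots\oplus y_{m+2\nu-3}y_{m+2\nu-2}\oplus a$, giving \eqref{fTypeB} directly. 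You instead settle the ``$-$'' case via Lemma~\ref{kasami} together with the rank computation of Proposition~\ref{affine} (rank $t+3$ excludes Type\,(A) unless $i=t-1$, where Type\,(A) with $\mu=2$ coincides with \eqref{fTypeB}, $\nu=2$, $a=0$), and you transfer to the ``$+$'' case by complementing inside the span; your bookkeeping there ($\af(\overline{T})=t+3$ via Lemma~\ref{l:odd}, $\langle \overline{T}\rangle=\langle T\rangle$, and the indicator of $T$ becoming $y_1\cdots y_{m-2}(Q\oplus1)$) is sound. One caveat worth stating explicitly: Proposition~\ref{affine} is formulated for simple $[t]$-trades, but its proof operates on the associated unitrade, so your application of it to unitrades is legitimate provided you remark this.

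Part~(ii), however, contains a genuine gap, and you name it yourself: your entire reduction is conditional on the bound $\af(T)\le t+3$, which you do not prove. The plan you sketch --- restrict to the span, reduce by shifts, and run a replication-count inclusion--exclusion in the spirit of Lemma~\ref{rx>2t-1} to force $\vol(T)\ge 2^{t+1}+2^{t-3}$ whenever $\af(T)\ge t+4$ --- is not an argument but a hope, and there is no reason to expect it to close. Ruling out all $[t]$-unitrades (equivalently, all $\RM(m,v)$ codewords) of affine rank at least $t+4$ with weight in the interval $(2^{t+2},\,2^{t+2}+2^{t-2})$ is exactly the content of the Kasami--Tokura--Azumi characterization of codewords of weight below $2.5$ times the minimum distance \cite{KTA:76}; that is a substantial coding-theoretic theorem whose proof goes far beyond replication counting, and the paper's own proof of (ii) consists precisely of citing it (together with Lemma~\ref{l:unitrade-RM}). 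Note also that Lemma~\ref{rx>2t-1} is stated and proved for simple $[t]$-trades, not unitrades, so even the tool you propose is not available in your setting without reworking. As written, your proposal establishes (ii) only modulo the rank bound; to complete it you must either invoke \cite{KTA:76} as the paper does, or supply a new proof of that bound, which your sketch does not provide.
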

\begin{proof}
 (i)
 If $\af(T)=t+3$, then there is an invertible affine variable transformation
 that sends $T$ to a $[t]$-unitrade $T'$ whose affine span is defined by the equations
 \begin{equation} \label{eq:=1=1} y_1=1,\  \ldots,\ y_{m-2}=1 \quad (m=v-t-1). \end{equation}
 It follows from \eqref{eq:=1=1} that the polynomial associated to $T'$ has the form
 $$ g(y_1,\ldots,y_v) =  y_1\cdots y_{m-2}\cdot h(y_{m-1},\ldots,y_v).$$
 By  Lemma~\ref{l:unitrade-RM}, $g$ has degree at most $m$, and hence $h$ has degree at most $2$.
 The polynomial $h$, as a polynomial in the $t+3$ variables $y_{m-1}$, \ldots, $y_v$,
 has $2\vol(T)$ ones, which is either $2^{t+2}- 2^{i+1}$ or $2^{t+2}+ 2^{i+1}$.
 By the results of \cite{SloBer:1970}, $h$ is affinely equivalent to
 $$ y_{m-1}\cdot y_m \oplus y_{m+1}\cdot y_{m+2} \oplus \cdots \oplus y_{m+2\nu-3}\cdot y_{m+2\nu-2} \oplus a $$
 with $a=0$ or $a=1$, respectively.
 Therefore, $g$ is affinely equivalent to $f$ in \eqref{fTypeB}.

 (ii) is straightforward from Lemma~\ref{l:unitrade-RM}
 and the characterization of the codewords of $\RM(m,v)$
 of weight smaller
 than $2.5\cdot 2^{m-v}$ \cite{KTA:76}.
\end{proof}

\begin{example}\label{ex:3}
The set
$T=\{00111,10011,01011,11001,11100,11010\}$
is a $[1]$-unitrade of volume $2^2-2^0$.
Consider the linear transformation $f$ between two 4-dimensional linear spaces that maps
$ 00000 \to 00000 $,
$ 10100 \to 00010 $,
$ 01100 \to 00001 $,
$ 11110 \to 01111 $,
$ 11011 \to 01011 $,
$ 11101 \to 00111 $. Note that $\{10100, 01100, 11110, 11011\}$ and $\{00010, 00001, 01111, 01011\}$ are linearly independent sets and
$ 11101=10100+01100+11110+11011 \to 00111=00010+00001+01111+01011 $.
Now $\x\mapsto f(\x+00111)+11100$ is the invertible affine transformation that maps
$ 00111 \to 11100 $,
$ 10011 \to 11110 $,
$ 01011 \to 11101 $,
$ 11001 \to 10011 $,
$ 11100 \to 10111 $,
$ 11010 \to 11011 $. So $T$ is mapped
onto the $[1]$-unitrade
$\{11100,11110,11101,10011,10111,11011\}$
which is the set of ones of the polynomial
$ y_1\cdot(y_2\cdot y_3 \oplus y_4 \cdot y_5)$
of type \eqref{fTypeB}.
 \end{example}

\begin{lem}\label{rx=vol/2} Let $T$ be a $[t]$-unitrade with
$\af(T)=t+3$ and $\vol(T)=2^{t+1}\pm2^i$
where $t/2\le i\le t-1$.
 Then $r_j=\vol(T)/2$ for some $j\in\found(T)$.
 \end{lem}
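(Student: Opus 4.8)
The plan is to pass to the normal form provided by Lemma~\ref{rem:t-4}(i) and then reinterpret the conclusion combinatorially. For a unitrade, the equality $r_j=\vol(T)/2$ is equivalent to $|\{B\in T:\, j\in B\}|=|T|/2$; that is, the coordinate function $y_j$ equals $1$ on exactly half of the blocks of $T$. Let me say that an affine function $\ell$ \emph{bisects} a set $S$ if $\ell=1$ on exactly half of $S$. Thus I must produce a coordinate $y_j$ that bisects $T$.

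First I would apply Lemma~\ref{rem:t-4}(i): since $\af(T)=t+3$ and $\vol(T)=2^{t+1}\pm2^i$ with $t/2\le i\le t-1$ (which lies in the range $(t-1)/2\le i\le t-1$), there is an invertible affine map $\phi$ sending $T$ to the set $T'$ of ones of the polynomial $f$ in \eqref{fTypeB}, with $\nu=t+1-i$ and $m=v-t-1$. The function $f$ involves only $y_1,\dots,y_{m+2\nu-2}$; I call the remaining variables $y_{m+2\nu-1},\dots,y_v$ the \emph{free} variables. Their number is $t+3-2\nu$, and the hypothesis $i\ge t/2$ is exactly what makes this positive: it gives $\nu\le(t+2)/2$, hence $2\nu\le t+2$, so at least one free variable exists.

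The elementary heart of the argument is a one-sided observation: any affine function $\ell$ whose linear part has a nonzero coefficient on some free variable bisects $T'$. Indeed, $f$ is constant along the free directions, so $T'=S_0\times\mathbb{F}_2^{\,t+3-2\nu}$, where $S_0$ is the support of $f$ in the non-free coordinates; for each fixed value of the non-free variables, $\ell$ restricts to a nonconstant affine function of the free variables and hence equals $1$ on exactly half of that fiber. Summing over the fibers making up $T'$ shows that $\ell$ bisects $T'$. I emphasize that the converse is not needed.

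Finally I would transfer the conclusion back to the original coordinates by a dimension count, which I expect to be the conceptual crux, since bisection by a coordinate is not preserved by $\phi$. Put $g_j:=y_j\circ\phi^{-1}$; as $\phi$ is a bijection of $T$ onto $T'$, the coordinate $y_j$ bisects $T$ precisely when $g_j$ bisects $T'$. The $g_j$ are the coordinate functions of the invertible affine map $\phi^{-1}$, so their linear parts are the rows of an invertible $v\times v$ matrix and are therefore linearly independent. If no $g_j$ bisected $T'$, then by the previous paragraph each $g_j$ would have zero coefficients on all free variables, placing all $v$ of their linear parts in the span of the $m+2\nu-2$ non-free coordinate functions, a subspace of dimension $m+2\nu-2<v$; this contradicts their linear independence. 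Hence some $g_j$ bisects $T'$, i.e.\ the corresponding coordinate $y_j$ bisects $T$, giving $r_j=\vol(T)/2$, and $r_j>0$ forces $j\in\found(T)$.
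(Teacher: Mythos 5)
Your proof is correct and follows essentially the same route as the paper's: both pass to the normal form \eqref{fTypeB} via Lemma~\ref{rem:t-4}(i), use $i\ge t/2$ to guarantee a free variable, exploit the product structure of the set of ones of $f$ along the free directions, and use invertibility of the affine transformation to force some coordinate's pullback to involve a free variable. The only difference is packaging: the paper fixes the single free variable $y_v$ and finds $j$ from the fact that the last row of $M^{-1}$ is nonzero, whereas you treat all free variables at once and phrase the same pigeonhole as a linear-independence/dimension count.
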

\begin{proof}{By Lemma~\ref{rem:t-4},
the associated polynomial corresponding to $T$ can be obtained from \eqref{fTypeB} by an invertible affine transformation of variables.
 We have
  $$m+2\nu-2=v-1+t-2i\le v-1.$$
 So $y_v$ is a free variable of $f$, which implies that $r_v=\vol(T)/2$.
 In fact the set of ones of $f$ is of the form $S\times\{0,1\}$ for some $S\subset 2^{[v-1]}$ with $|S|=\vol(T)$.
 Let $\y\mapsto \y M+\bb$ be the invertible affine transformation which gives the associated polynomial of $T$. % when applied on the variables of $f$.
 Hence $T$ is the set of ones of $g(\y)=f(\y M+\bb)$, i.e., % be the associated polynomial of $T$. Note that
  $$T=\{(\y-\bb)M^{-1}: \y\in S\times\{0,1\}\}=\{\x M^{-1}: \x\in S'\times\{0,1\}\},$$
  for some $S'$ with $|S'|=|S|$.
  The last row of $M^{-1}$ should be nonzero. So we may assume that  the $j$-th column of $M^{-1}$, say $\ba^\top$ has its  last component equal to $1$.
  Then we have either $\x\ba^\top=1$ for all $\x\in S'\times\{0\}$ or $\x\ba^\top=1$ for all $\x\in S'\times\{1\}$.
  This means that $r_j=|S'|/2=\vol(T)/2$.
}\end{proof}

\begin{lem} \label{found=afrk} For any simple $[t]$-trade $T$, there exists a simple $[t]$-trade $T'$ with $|\found(T')|=\af(T')=\af(T)$ and $\vol(T')=\vol(T)$.
\end{lem}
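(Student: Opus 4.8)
The plan is to exhibit $T'$ as the image of $T$ under a suitable invertible affine transformation of the ambient space $2^V$, chosen so that the affine span of $T$ is carried onto a coordinate subspace of dimension exactly $\af(T)$. I would rely on two facts: (a) an invertible affine transformation of the variables sends a simple $[t]$-trade to a simple $[t]$-trade of the same volume and preserves the affine rank; and (b) for any simple $[t]$-trade $S$ one always has $\af(S)\le|\found(S)|$, since every block of $S$ is a subset of $\found(S)$, so all of $S$ lies in the $|\found(S)|$-dimensional coordinate subspace $2^{\found(S)}$, whose affine subsets have dimension at most $|\found(S)|$.

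For (a), I would argue via the functional characterization of trades. Writing $T=\sum_X\tau_X X$ as in \eqref{eq:pol}, condition \eqref{eq:def} says exactly that $\sum_X\tau_X\, m_S(X)=0$ for every monomial $m_S=\prod_{i\in S}y_i$ of degree $|S|\le t$; equivalently, $T$ is orthogonal (under the standard integer-valued inner product on functions $2^V\to\mathbb Z$) to every polynomial of degree at most $t$. If $A(\x)=\x M\oplus\bb$ is an invertible affine map, then for each such monomial the composition $m_S\circ A$ is a product of $|S|$ affine forms, hence again a polynomial of degree at most $t$; since $\langle T\circ A^{-1},\,m_S\rangle=\langle T,\,m_S\circ A\rangle=0$, the function $T\circ A^{-1}$ is again a $[t]$-trade. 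As $A$ is a bijection it preserves disjointness of the legs, simplicity, and volume, and being an invertible affine map it preserves affine rank. This is the affine invariance already invoked for unitrades in the proof of Proposition~\ref{affine} and recorded in \cite{Kro:gaps}.

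For the construction, set $d=\af(T)$ and let $\langle T\rangle$ be the $d$-dimensional affine span of $T$. Fixing a block $p_0$ of $T$, the set $L=\{X\oplus p_0:X\in\langle T\rangle\}$ is a $d$-dimensional linear subspace; choosing a basis of $L$ and extending it to a basis of $2^V$ yields an invertible linear map $M$ carrying $L$ onto the subspace spanned by the first $d$ coordinates. Then $A(\x)=(\x\oplus p_0)M$ is an invertible affine map sending $\langle T\rangle$ onto $\{\x:x_{d+1}=\cdots=x_v=0\}$. Put $T'=T\circ A^{-1}$. Every block of $T'$ lies in this coordinate subspace, so $\found(T')\subseteq\{1,\ldots,d\}$ and hence $|\found(T')|\le d$. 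Combining this with fact (b) and with $\af(T')=\af(T)=d$ from (a) gives $d=\af(T')\le|\found(T')|\le d$, forcing $|\found(T')|=\af(T')=\af(T)$; and $\vol(T')=\vol(T)$ by (a).

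The only genuine point to be careful about is fact (a): the earlier parts of the paper establish affine invariance explicitly only for (unsigned) unitrades via the Reed--Muller correspondence, whereas here it is needed for the signed trade, preserving the split of blocks into the two legs. The orthogonality argument above handles this uniformly, and the remainder of the proof is elementary linear algebra.
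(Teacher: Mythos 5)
Your fact (a) is false, and the proof collapses with it: invertible affine (even linear) transformations of $2^V$ do \emph{not} preserve the signed trade property — only shifts (Lemma~\ref{l:shift}) and coordinate permutations do. Concretely, run your own construction on the $[1]$-trade $T=(\{\emptyset,\{1,2\}\},\{\{1\},\{2\}\})$: here $p_0=\emptyset$, $L=\langle T\rangle=2^{\{1,2\}}$, and choosing the basis $\{1,2\},\{2\}$ of $L$ gives the invertible linear map $M$ with $\{1,2\}\mapsto\{1\}$, $\{2\}\mapsto\{2\}$, hence $\{1\}\mapsto\{1,2\}$. The image is $(\{\emptyset,\{1\}\},\{\{1,2\},\{2\}\})$, which is not a $[1]$-trade: the element $2$ has replication $0$ in one leg and $2$ in the other. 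The flaw sits in the word ``equivalently'' of your orthogonality argument. Orthogonality of $T$ to all monomials of degree at most $t$ over $\mathbb Z$ yields orthogonality to all \emph{integer} linear combinations of such monomials, but a $\mathrm{GF}(2)$-polynomial of degree at most $t$, read as a $\{0,1\}$-valued function on $2^V$, is generally not such a combination, because XOR is not integer addition: $y_1\oplus y_2=y_1+y_2-2y_1y_2$ has integer degree $2$. In the example, the monomial $y_2$ pulls back under $M$ to $y_1\oplus y_2$, and $\langle T,\,y_1\oplus y_2\rangle=-2\neq0$, which is exactly why the image fails to be a trade. Affine invariance does hold for \emph{unitrades}, via Lemma~\ref{l:unitrade-RM} and the affine invariance of Reed--Muller codes, and that unsigned setting is the only place the paper uses it (e.g., in Proposition~\ref{affine}); for signed trades it breaks down.

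The gap cannot be patched within your framework. The affine maps that are guaranteed to preserve trades are the shifts and permutations, and these cannot force the foundation down to the affine rank: for the $[1]$-trade $(\{\{1,3\},\{2,4\}\},\{\{1,4\},\{2,3\}\})$, which has $\af=2$, every shift still has foundation of size at least $4$, since the union of two blocks contains their symmetric difference, which is $\{1,2,3,4\}$ here. The paper's proof uses a different, non-invertible operation for which trade-preservation is already established: the $i$-projection (Lemma~\ref{l:proj}). Writing $A=\langle T\rangle$, either every $i$-projection, $i\in\found(T)$, halves $A$ — which forces $A=2^{\found(T)}$, so $|\found(T)|=\af(T)$ and $T'=T$ works — or some $i$-projection is injective on $A$, hence on $T$; being a linear map that is injective on $A$, it preserves volume, simplicity, and affine rank while removing $i$ from the foundation, and iterating this $|\found(T)|-\af(T)$ times produces the required $T'$. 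If you want to keep an ``affine normal form'' picture, you must prove that some \emph{specific} choice of transformation preserves the trade property (which is in effect what the chain of injective projections does); no argument quantifying over all affine maps can succeed, since for most of them the conclusion is simply false.
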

\begin{proof}
Denote by $A$ the affine span of $T$, and by $A_i$, the $i$-projection of $A$.
If $|A_i|<|A|$ for all $i\in\found(T)$, then $A=2^{\found(T)}$, and the statement trivially holds with $T'=T$.
Otherwise, $|A_i|=|A|$ for some $i\in \found(T)$, and the $i$-projecting acts bijectively on $A$.
It follows that the $i$-projection of $T$ has the same volume and affine rank as $T$, but smaller foundation.
Repeating this operation  $|\found(T)|-\af(T)$ times, we find a required $T'$.
\end{proof}

\begin{lem}\label{l:computational} The simple $[2]$-trades of foundation size $5$ and volume $6$, $8$, $10$ satisfy the following properties.
\begin{itemize}
  \item[\rm(i)] In any simple $[2]$-trade with volume $6$ and foundation size $5$, the number of elements with odd replication (the only possible odd value is $3$) is odd.
  \item[\rm(ii)] In any simple $[2]$-trade with volume $8$ and foundation size $5$, the number of elements with odd replication ($3$ or $5$) is even.
  \item[\rm(iii)] In any simple $[2]$-trade with volume $10$ and foundation size $5$, the number of elements with odd replication (the only possible odd value is $5$) is odd.
\end{itemize}
\end{lem}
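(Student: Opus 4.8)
The plan is to convert each of the three claims into a statement about a single parity, that of $\sum_{\ell\in\found(T)} r_\ell$, and then to read off that parity from the unitrade $U:=T_+\cup T_-$.

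I begin with a book-keeping observation: since a sum of integers is odd exactly when an odd number of the summands are odd,
$$ \#\{\ell\in\found(T): r_\ell \text{ odd}\}\equiv \sum_{\ell}r_\ell=\sum_{B\in T_+}|B|\equiv N_+ \pmod 2, $$
where $N_+$ denotes the number of odd-size blocks in $T_+$; so it suffices to compute $N_+\bmod 2$. The \emph{admissible} odd values come for free: for an essential $\ell$, Lemma~\ref{T_x} shows that $T_i$ and $T_{\ov i}$ are nonempty $[1]$-trades of volumes $r_\ell$ and $\vol(T)-r_\ell$, so $2\le r_\ell\le \vol(T)-2$, which already restricts the odd values to $\{3\}$ for volume $6$ and to $\{3,5\}$ for volume $8$.

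The key reduction is the following. Writing $T=\sum_X\tau_X X$ as in (\ref{eq:pol}) and using the identity $(-1)^{|X|}=\sum_{S}(-2)^{|S|}[S\subseteq X]$, one gets
$$ \sum_X \tau_X(-1)^{|X|}=\sum_S(-2)^{|S|}\sum_{X\supseteq S}\tau_X . $$
The defining equations (\ref{eq:def}) annihilate every term with $|S|\le 2$, and each surviving term ($|S|\ge 3$) is divisible by $8$; hence $\sum_X\tau_X(-1)^{|X|}\equiv 0\pmod 8$. The left-hand side equals $2(N_--N_+)$, where $N_-$ is the number of odd-size blocks in $T_-$, so $N_+\equiv N_-\pmod 4$, and therefore $2N_+\equiv N_++N_-=N_U\pmod 4$, where $N_U$ is the number of odd-size blocks of $U$. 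Consequently
$$ \#\{\ell: r_\ell\text{ odd}\}\equiv N_+\equiv N_U/2 \pmod 2, $$
and the problem is reduced to computing $N_U\bmod 4$ for the unitrade. This quantity is invariant under the equivalence operations available for trades, namely coordinate permutations and shifts (Lemma~\ref{l:shift}, the latter because all three volumes are even). For each volume the unitrade is severely constrained: by Proposition~\ref{affine} the volume-$6$ unitrade is of Type~(A) with $\af=5$, while by Lemmas~\ref{rem:t-4} and~\ref{rx=vol/2} the volume-$10$ unitrade of affine rank $5$ is affinely the set of ones of $y_1y_2\oplus y_3y_4\oplus 1$ with a free variable. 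In each such model $U$ carries a free direction of \emph{odd} weight, so its blocks split evenly between the two size-parities and $N_U=\vol(T)$. Then $N_U/2=\vol(T)/2\in\{3,4,5\}$ is odd, even, odd for volumes $6,8,10$ respectively, matching (i)--(iii).

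The main obstacle is exactly this last step: I must show $N_U\equiv \vol(T)\pmod 4$ (equivalently, that the free direction may always be taken of odd weight) for \emph{every} foundation-$5$ trade of the prescribed volume, not merely for the affine normal forms. The delicate point is that $N_U\bmod 4$ is invariant only under permutations and shifts, \emph{not} under the full affine group $\mathrm{AGL}$ under which the Kasami--Tokura forms of Lemma~\ref{kasami} are classified; so the passage from a normal form back to an arbitrary trade must be controlled. I expect to handle this by combining the rigidity of the affine type (Lemmas~\ref{rem:t-4} and~\ref{rx=vol/2}) with the fact that a genuine splitting of $U$ into legs $(T_+,T_-)$ satisfying (\ref{eq:def}) — as opposed to an arbitrary bisection — forces the free direction to be essential; for volume $10$ the same structural input is what pins the only admissible odd replication at $5$, excluding the values $3$ and $7$ left open by the crude bound $2\le r_\ell\le\vol(T)-2$. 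Should this structural argument resist a clean closed form, the finitely many foundation-$5$ trades of volumes $6,8,10$ can be enumerated directly, as in Section~\ref{s:comp}, to confirm the three parities.
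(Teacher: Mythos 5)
Your mod-$8$ reduction is correct and is the most valuable part of the proposal: writing $N_+$, $N_-$ for the numbers of odd-size blocks of $T_+$, $T_-$ and $N_U$ for that of $U=T_+\cup T_-$, the identity $(-1)^{|X|}=\sum_{S\subseteq X}(-2)^{|S|}$ combined with the defining equations (\ref{eq:def}) indeed gives $2(N_--N_+)\equiv 0\pmod 8$, hence $\#\{\ell\in\found(T):r_\ell\ \mathrm{odd}\}\equiv N_+\equiv N_U/2\pmod 2$; note that this genuinely uses the signed trade structure, since for a bare $[2]$-unitrade the same computation only yields that $N_U$ is even. But the proof does not close. Everything now rests on the claim $N_U\equiv\vol(T)\pmod 4$, which you verify only on affine normal forms, while $N_U\bmod 4$ is invariant under permutations and shifts but \emph{not} under the affine transformations by which Lemma~\ref{kasami} and Lemma~\ref{rem:t-4} classify unitrades --- a difficulty you flag yourself but do not resolve. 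The gap is essential, not cosmetic. Let $U$ be the set of ones of $y_1y_2\oplus y_3y_4$ and $U'$ the set of ones of $y_1y_2\oplus y_3(y_4\oplus y_5)$ in $2^{[5]}$: by Lemma~\ref{l:unitrade-RM} these are linearly equivalent $[2]$-unitrades of volume $6$ and foundation size $5$, yet a direct count gives $N_U=6$ while $N_{U'}=8$. So the congruence you need fails for unitrades affinely equivalent to your model; it can hold at most for unitrades that actually \emph{split} into a $[2]$-trade, and proving that splittability forces it (your ``rigidity''/``essential free direction'' idea, stated only as an expectation) is exactly the missing content --- in effect a non-splittability statement of the same character as Theorem~\ref{main}, not a routine verification. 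In addition, volume $8=2^{t+1}$ is covered by no normal form you cite: Lemma~\ref{kasami} requires weight strictly below $2\cdot 2^{v-m}$ and Lemma~\ref{rem:t-4} requires volume $2^{t+1}\pm 2^i$, so part (ii) of the lemma is not addressed by the model argument at all.

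For comparison, the paper makes no attempt at a structural proof: Lemma~\ref{l:computational} is proved purely computationally, by running the algorithm of Section~\ref{s:alg} (based on Corollary~\ref{c:decomp}) to enumerate all $[2]$-trades with foundation size $5$ and the relevant volumes (the $t=2$ table of Section~\ref{s:res} for volume $6$, and the final table of Section~\ref{s:comp} for volumes $8$--$12$), and reading the three parities off the lists. Your fallback option (``enumerate directly'') is therefore exactly the paper's proof, but you have not carried it out, so as written the proposal establishes neither the key congruence nor the lemma. If the enumeration were done, your reduction to $N_U\bmod 4$ would still be a worthwhile addition, since it isolates precisely what must be checked; but on its own it is a reformulation of the statement, not a proof of it.
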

The proof of Lemma~\ref{l:computational} is by computation and will be addressed in Section~\ref{s:comp}.
The sharpening claims in the parenthesis can be easily shown theoretically,
but we will not use them in the further discussion.

\begin{lem}\label{t=2,3}
Let $t=2,3$, and $T$ be a simple $[t]$-trade with $1.5\cdot2^t<\vol(T)<2.5\cdot2^t$ and $\vol(T)\ne2\cdot2^t$.
Then the affine rank of $T$ is at least $t+4$.
\end{lem}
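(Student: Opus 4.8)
The plan is to assume $\af(T)\le t+3$ and derive a contradiction, exploiting that by Proposition~\ref{affine} a Type\,(B) unitrade is precisely what forces the borderline rank $t+3$. First I would record that $\af(T)\ge t+1$ always, and eliminate the two lowest values. The value $\af(T)=t+1$ is impossible because it would confine $T$ to a $(t+1)$-dimensional affine subspace and force $\vol(T)\le2^t$, against $\vol(T)>1.5\cdot2^t$. For $\af(T)=t+2$, I would invoke Lemma~\ref{found=afrk} to assume $|\found(T)|=\af(T)$, so that $\langle T\rangle=2^{\found(T)}$ is the full $(t+2)$-dimensional space; by Lemma~\ref{affinesAREtrades} it is a $[t]$-unitrade, hence so is its complement, whose volume is $2^{t+1}-\vol(T)<2^{t-1}$ and which is nonempty when $\vol(T)<2^{t+1}$, contradicting Lemma~\ref{l:odd} (when $\vol(T)>2^{t+1}$ the bound $2\vol(T)>2^{t+2}$ already excludes $\af=t+2$). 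This leaves $\af(T)=t+3$, where again I take $|\found(T)|=t+3$.

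Next I would split on the volume. For $\vol(T)<2^{t+1}$ the unsigned trade $U=T_+\cup T_-$ is a codeword of $\RM$ of weight below twice the minimum distance, so by Lemma~\ref{kasami} and Proposition~\ref{affine} it is of Type\,(A) or Type\,(B); Type\,(A) gives $\af(T)=2t+2-i\ge t+4$ (since $\vol(T)>1.5\cdot2^t$ forces $i<t-1$), a contradiction, so $U$ must be of Type\,(B). For $\vol(T)>2^{t+1}$ I would pass to the complement $U^{c}=2^{t+3}\setminus U$, a $[t]$-unitrade of volume $2^{t+2}-\vol(T)\in(1.5\cdot2^t,2^{t+1})$ and affine rank at most $t+3$; the same dichotomy applied to $U^c$ rules out Type\,(A) and, via Lemma~\ref{gaps}, any volume outside the admissible spectrum, again leaving Type\,(B). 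For $t=2$ this already finishes the proof: Type\,(B) requires $\nu\ge3$, while Lemma~\ref{kasami} permits only $\nu\le(t+3)/2<3$, so no Type\,(B) object exists and the surviving cases are vacuous. The only genuinely surviving configurations are $t=3$ with $U$ of Type\,(B), namely $\vol(T)=14$ and, through the complement, $\vol(T)=18$.

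The crux is to rule out these two cases, and here the key idea is to read off coordinate sections. By Lemma~\ref{rem:t-4} the relevant unitrade is the set of ones of a nondegenerate quadric affinely equivalent to $q=y_1y_2\oplus y_3y_4\oplus y_5y_6$ (or its complement). For any coordinate $i$, both slices $\{y_i=0\}$ and $\{y_i=1\}$ are sections of this quadric by coordinate hyperplanes, whose sizes lie in $\{12,16,20\}$ and sum to $|U|$; for $\vol(T)=14$ this forces the two section volumes to be exactly $6$ and $8$, and for $\vol(T)=18$ exactly $8$ and $10$, each section having foundation $5$. Crucially, $T_i$ and $T_{\ov i}$ are honest $[2]$-trades by Lemmas~\ref{l:sub} and~\ref{T_x}, of precisely these volumes. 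Since the full replication $r_j(T)=r_j(T_i)+r_j(T_{\ov i})$ is even for every $j$, the element $j$ has odd replication in $T_i$ if and only if it has odd replication in $T_{\ov i}$; thus the two trades have the same number of odd-replication elements. But Lemma~\ref{l:computational} assigns opposite parities to the two volumes involved (odd for $6$ and $10$, even for $8$), a contradiction, which eliminates both remaining configurations and completes the proof.

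I expect the main obstacle to be exactly this last step, for two reasons. First, one must resist applying the Kasami--Tokura affine normalization to $T$ itself: an invertible linear change of variables does not preserve the $[t]$-trade property (only translations do, by Lemma~\ref{l:shift}), so the sections feeding the parity argument must be genuine coordinate sections of the original $T$, and Lemma~\ref{rem:t-4} may be used only to compute the affine-invariant section volumes and foundation sizes. Second, establishing that every coordinate section has foundation exactly $5$ (so that Lemma~\ref{l:computational} applies) relies on the nondegeneracy of $q$; the short verification that a foundation-$4$ section would make a coordinate slice of $U$ an affine subspace rather than a quadric section is routine but must be carried out. Once the section volumes and foundations are pinned down, the parity clash furnished by Lemma~\ref{l:computational} closes the argument.
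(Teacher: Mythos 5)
Your proposal is correct, and it ends in exactly the same place as the paper --- the parity clash supplied by Lemma~\ref{l:computational} applied to the two slices $T_i$, $T_{\ov i}$ at a coordinate $i$ --- but the route to that endgame is genuinely different. The opening reduction coincides with the paper's (rule out $\af(T)\le t+2$ via Lemmas~\ref{found=afrk}, \ref{unitrade} and \ref{l:odd}; invoke the Kasami--Tokura dichotomy and Proposition~\ref{affine} so that Type\,(A) is incompatible with $\af(T)=t+3$, leaving only the Type\,(B) volumes, i.e.\ $t=3$ with $\vol(T)=14$ or, through the complement, $18$). From there the paper shifts $T$ to reduced form and runs a replication case analysis: Lemma~\ref{gaps}, Lemma~\ref{rx>2t-1}, Lemma~\ref{rx=vol/2} and the already-settled $t=2$ case are combined to force a coordinate whose slices have volumes $\{6,8\}$ (resp.\ $\{8,10\}$) and foundation $5$, with separate arguments (including an intermediate claim) for the volumes $14$ and $18$. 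You instead exploit the normal form of Lemma~\ref{rem:t-4}: since $U=T_+\cup T_-$ is an affine image of the support of the nondegenerate quadric $y_1y_2\oplus y_3y_4\oplus y_5y_6$ (or of its complement), every coordinate slice of $U$ is the image of an \emph{arbitrary} affine hyperplane section of that quadric, so all slice sizes lie in $\{12,16\}$ (resp.\ $\{16,20\}$), every replication $r_j$ is even, and the parity transfer between $T_i$ and $T_{\ov i}$ is immediate; this treats $14$ and $18$ uniformly, with no reduction step, no case analysis, and no appeal to Lemmas~\ref{rx>2t-1} or \ref{rx=vol/2}. The price is two facts about the explicit quadric that the paper never needs and that you assert rather than prove: (a) every affine hyperplane section of the quadric's support has size $12$ or $16$ (resp.\ $16$ or $20$) --- this is the bent-function property of $y_1y_2\oplus y_3y_4\oplus y_5y_6$, verifiable by a character sum; and (b) neither the support nor its complement contains a $4$-flat, which gives your foundation-$5$ claim --- this holds because constancy of $q$ on a coset $u+W$ forces the associated bilinear form to vanish on $W$, and a totally isotropic subspace of the nondegenerate form on $\mathrm{GF}(2)^6$ has dimension at most $3$ (note that for the $20$-point slice, foundation $4$ is already impossible by cardinality, and for the $12$-point slice one can alternatively argue via Lemma~\ref{l:odd}). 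Both verifications are routine and you correctly flag them, together with the essential warning that the affine normalization may only be used to compute invariants and never applied to $T$ itself, since linear substitutions destroy the trade property (Lemma~\ref{l:shift} covers shifts only); with (a) and (b) written out, your argument is a complete and arguably more uniform alternative to the paper's proof.
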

\begin{proof}{As shifts do not change the volume and affine rank of trades, in view of Lemma~\ref{l:reduced}, we may assume that $T$ is a reduced simple $[t]$-trade.

First let $t=2$. Then $\vol(T)=7$ or $9$.
If $\vol(T)=7$, then by Proposition~\ref{affine}, it  has affine rank $6$ (this is even true for $[2]$-unitrades of volume 7).
Let $\vol(T)=9$.  We have $\af(T)\ge\left\lceil\log_2(2\vol(T))\right\rceil\ge5$.  If $\af(T)=5$, by Lemma~\ref{unitrade}, there exists a $[2]$-unitrade with affine rank $5$ and $2^5-18=14$ blocks, which cannot exist as just shown.
It follows that $\af(T)\ge6$.

Now assume that $t=3$. We have $\vol(T)\in\{14,15,17,18,19\}$.
If $\vol(T)=15$, we are done by Proposition~\ref{affine}.
Let $\vol(T)$ is $17$ (respectively, $19$). We have $\af(T)\ge\left\lceil\log_2(2\vol(T))\right\rceil\ge6$.
If $\af(T)=6$, then by Lemma~\ref{unitrade},
there exists a $[3]$-unitrade with affine rank $6$ and cardinality $13$ (respectively $15$)
 which is impossible by Lemma~\ref{gaps} (by the above argument).
 So $\af(T)\ge7$.
It remains to prove the assertion for volumes $14$ and $18$.

Suppose  $\vol(T)=14$. For a contradiction, let $\af(T)=6$.
By Lemma~\ref{found=afrk}, we may assume that $|\found(T)|=6$.
Applying Lemma~\ref{gaps} to $T_{i}$ we obtain  $r_{i}\in\{4,6,7\}$ for all $i\in\found(T)$.
If $r_{i}=7$ for some $i\in\found(T)$, then $T_{i}$ is a $[2]$-trade of volume $7$ and has affine rank at least $6$ by Proposition~\ref{affine}. Hence $\af(T)\ge7$, a contradiction.
Hence for all $i\in\found(T)$, $r_{i}=4$ or $6$. If for all $i\in\found(T)$, $r_{i} =4$, then we are done by %similar arguments as in the proof of
Lemma~\ref{rx>2t-1}. So assume that $r_{i}=6$ for some $i\in\found(T)$.
Here $T_{i}$ is a $[2]$-trade of volume $6$ and $|\found(T_{i})|=5$ ($|\found(T_{i})|$ cannot be smaller than $5$ as $\af(T_{i})=5$).
Note that $\vol(T_{\ov i})=8$. Also $|\found(T_{\ov i})|=5$, because $T_{\ov i j}$ is a $[1]$-trade and so $\af(T_{\ov i j})\ge4$, it follows that
$\af(T_{\ov i})\ge5$. On the other hand, $\af(T_{\ov i})\le\af(T)-1=5$.
Our aim is to obtain a contradiction by considering the replications of elements in both $T_{i}$ and  $T_{\ov i}$.
In view of Lemma~\ref{l:computational} applied to $T_i$, the number of $j\in\found(T)$ with $r_{i j}=3$ must be odd.
We further claim that $r_{i j}=3$ if and only if $r_{\ov i j}=3$.
The claim follows from the fact that if either $r_{i j}=3$ or $r_{\ov i j}=3$, then $r_{j}=6$; since otherwise, $r_{j}=4$, and then $T_{i j}$ or $T_{\ov i j}$ would be a $[1]$-trade of volume $1$, a contradiction. Also there are no $k\in\found(T)$ with $r_{\ov i k}=5$; since otherwise $r_{k}$ is necessarily $6$, and so $T_{i\ov k}$ would be a
$[1]$-trade of volume $1$, again a contradiction.
The above argument shows that the number of elements with an odd replication in $T_{i}$ is the same as the number of elements with an odd replication in $T_{\ov i}$. However, by Lemma~\ref{l:computational}, the former is an odd number and the latter is an even number, again a contradiction.

Finally, suppose that $\vol(T)=18$ and $\af(T)=6$.
By Lemma~\ref{found=afrk},
we may assume that $|\found(T)|=6$.
By Lemma~\ref{gaps} and since $T$ is reduced we have $r_{i}\in\{4,6,7,8,9\}$
for all $i\in\found(T)$.
If $r_{i}\in\{7,9\}$,
for some $i\in\found(T)$, then $T_{i}$ is a $[2]$-trade of volume $7$ or $9$ and consequently $\af(T_i)\ge6$ as we just showed.
Hence $\af(T)\ge7$, a contradiction.
So $r_{i}\in\{4,6,8\}$ for all $i\in\found(T)$.

We claim that $r_{k}=8$ for some $k\in\found(T)$.
Otherwise, $r_{i}\in\{4,6\}$ for all $i\in\found(T)$.
If for all $i$, $r_{i}=4$, then by
Lemma~\ref{rx>2t-1} we have that $\af(T)\ge 7$.
 If $r_{i}=6$, for some $i\in\found(T)$,
 then by Lemma~\ref{rx=vol/2} applied to $T_{i}$,
 we obtain that $r_{i j}=3$ for some $j\in\found(T)$.
 It turns out that $r_{j}=6$.
 Thus $T_{\ov{i j}}$ has $18$ blocks;
 so $\af(T_{\ov{i j}})\ge 5$.
 It follows that $\af(T)\ge 7$, a contradiction.
 Hence, the claim follows.

Therefore, we assume that $r_{k}=8$ and so $\vol(T_{\ov k})=10$.
Also $\af(T_{\ov k})=|\found(T_{\ov k})|=5$.
For every $i \in\found(T)$,  $r_{i}$ is even ($4$, $6$, or $8$);
hence, the volumes of $T_{k i}$ and $T_{\ov k i}$ have the same parity.
It follows that the number of elements with an odd replication in $T_{k}$
is the same as the number of elements with an odd replication in $T_{\ov k}$.
However, the former is an odd number by Lemma~\ref{l:computational}(ii)
and the latter is an even number by Lemma~\ref{l:computational}(iii), a contradiction.
}\end{proof}

Now, we are ready to prove the main result of this section.

\begin{thm}\label{main} If $T$ is a simple $[t]$-trade with $1.5\cdot2^t<\vol(T)<2.5\cdot2^t$ and $\vol(T)\ne2^{t+1}$, then the affine rank of $T$ is at least $t+4$.
\end{thm}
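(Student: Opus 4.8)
The plan is to induct on $t$, using Lemma~\ref{t=2,3} for the base cases $t=2,3$ and assuming the theorem for $[t-1]$-trades whenever $t\ge4$. By Lemma~\ref{l:reduced} I may assume $T$ is reduced, since a shift alters neither $\vol(T)$ nor $\af(T)$; I then suppose for contradiction that $\af(T)\le t+3$, and by Lemma~\ref{found=afrk} I assume in addition that $|\found(T)|=\af(T)$. Since $|T_+\cup T_-|=2\vol(T)>2^{t+1}$ while a set of affine rank $r$ has at most $2^r$ elements, we get $\af(T)\ge t+2$, so only $\af(T)=t+2$ and $\af(T)=t+3$ need to be excluded.

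\textbf{Disposing of $\af(T)=t+2$.}
Here $2\vol(T)\le2^{t+2}$ forces $\vol(T)<2^{t+1}$, and the unsigned unitrade $T_+\cup T_-$ fills all but a few points of its $(t+2)$-dimensional affine hull. By Lemma~\ref{unitrade} the complement $\langle T_+\cup T_-\rangle\setminus(T_+\cup T_-)$ is a nonempty $[t]$-unitrade of volume $2^{t+1}-\vol(T)<2^{t-1}$, which contradicts the minimum-volume bound of Lemma~\ref{gaps}.

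\textbf{The main case $\af(T)=t+3$.}
This I would attack through one reduction principle: if some $j\in\found(T)$ has $r_j=\vol(T)/2$, then by Lemmas~\ref{l:sub} and~\ref{T_x} both $T_j$ and $T_{\ov j}$ are $[t-1]$-trades of volume $\vol(T)/2\in(1.5\cdot2^{t-1},2.5\cdot2^{t-1})\setminus\{2^t\}$, and since $\langle T\rangle$ meets each hyperplane $\{y_j=1\}$, $\{y_j=0\}$ in dimension $\af(T)-1=t+2$, we get $\af(T_j)\le t+2$, contradicting the induction hypothesis $\af(T_j)\ge t+3$. More generally, for every essential $i$ the projections $T_i,T_{\ov i}$ are $[t-1]$-trades of affine rank $\le t+2$, so by induction neither $r_i$ nor $\vol(T)-r_i$ may lie in the open interval $(1.5\cdot2^{t-1},2.5\cdot2^{t-1})$ unless it equals $2^t$; together with $r_i\le\tfrac12\vol(T)$ and the spectrum of Lemma~\ref{gaps} this pins $r_i\in\{2^{t-1},1.5\cdot2^{t-1},2^t\}$ and constrains $\vol(T)$. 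Writing $\sigma=\vol(T)/2^{t-1}$, this feasibility test already leaves no admissible $r_i$ when $\sigma\in(3,3.5)$, and forces every $r_i=2^{t-1}$ when $\sigma\in(3.5,4)$, whereupon Lemma~\ref{rx>2t-1} yields $\af(T)\ge t+4$. For the remaining volumes I would invoke the Kasami--Tokura structure: the characterization in Proposition~\ref{affine} and Lemma~\ref{rem:t-4} forces the unitrade to be of Type~(B), $\vol(T)=2^{t+1}\pm2^i$ with associated polynomial~\eqref{fTypeB}, and whenever $i\ge t/2$ Lemma~\ref{rx=vol/2} supplies a $j$ with $r_j=\vol(T)/2$, closing the argument via the reduction principle above.

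\textbf{The expected obstacle.}
The step I expect to be hardest is the \emph{balanced} Type~(B) unitrade, occurring for odd $t$ at the extreme index $i=(t-1)/2$, where the quadratic part of~\eqref{fTypeB} uses all $t+3$ coordinates. Here Lemma~\ref{rx=vol/2} is inapplicable, because its proof relies on a free variable that now does not exist; and for the plus sign $\vol(T)=2^{t+1}+2^{(t-1)/2}$ the feasibility test only narrows $r_i$ to $\{2^{t-1},1.5\cdot2^{t-1}\}$ while every $r_i$ and $\vol(T)-r_i$ falls \emph{outside} the open band, so the volume-driven induction gains no traction at all. (The minus sign $\vol(T)=2^{t+1}-2^{(t-1)/2}$ escapes this, since it lands in the range $\sigma\in(3.5,4)$ already settled by Lemma~\ref{rx>2t-1}.) Ruling out this last family is exactly the kernel of the nonexistence of Type~(B) trades, and I would dispatch it not by induction but by a direct replication-and-parity analysis of the two legs, in the spirit of the finite computation underlying Lemma~\ref{l:computational}.
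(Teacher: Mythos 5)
You follow the paper's general framework quite closely (induction on $t$ with Lemma~\ref{t=2,3} as the base, passing to a reduced trade, pinning $r_i\in\{2^{t-1},\,1.5\cdot2^{t-1},\,2^t\}$, and the toolbox of Lemmas~\ref{rx>2t-1}, \ref{unitrade}, \ref{gaps}, \ref{rx=vol/2}), and your disposal of $\af(T)=t+2$ and of the volumes below $2^{t+1}$ is sound. But the proposal has a genuine gap, and it sits exactly where you flag it: the case $\vol(T)=2^{t+1}+2^{(t-1)/2}$, $t$ odd, $\af(T)=t+3$ is not proved. Announcing a ``direct replication-and-parity analysis in the spirit of Lemma~\ref{l:computational}'' is a declaration of intent, not an argument: Lemma~\ref{l:computational} is a computer check for $t=2$ and foundation size $5$, and nothing in your text indicates how it would be replaced by a proof valid for an infinite family of parameters. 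Since this balanced Type\,(B) case is precisely the kernel of the theorem (it is what makes the nonexistence of Type\,(B) trades nontrivial), the proof is incomplete as written. There is also a secondary flaw earlier in the same paragraph: for $2^{t+1}+2^{t-3}\le\vol(T)<2^{t+1}+2^{t-1}$, neither Proposition~\ref{affine} (which concerns volumes below $2^{t+1}$) nor Lemma~\ref{rem:t-4} (part (i) takes the form $2^{t+1}\pm2^i$ as a \emph{hypothesis}; part (ii) stops at $2^{t+1}+2^{t-3}$) ``forces'' $\vol(T)=2^{t+1}\pm2^i$; you would additionally need the weight distribution of $\RM(2,t+3)$ (the result of \cite{SloBer:1970} quoted inside the proof of Lemma~\ref{rem:t-4}).

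The idea you are missing---and the one the paper uses to sweep the whole range $2^{t+1}<\vol(T)<2.5\cdot2^t$ uniformly, with no appeal to the Kasami--Tokura structure of $T$ itself---is to apply the complement-in-span argument not to $T$ but to the $[t-1]$-subtrade $T_{\ov i}$. In your hard case, some $i\in\found(T)$ has $r_i=1.5\cdot2^{t-1}$ (otherwise all $r_i=2^{t-1}$ and Lemma~\ref{rx>2t-1} finishes), so $\vol(T_{\ov i})=2.5\cdot2^{t-1}+2^{(t-1)/2}>2.5\cdot2^{t-1}$ and the induction hypothesis is indeed silent; but if $\af(T_{\ov i})=t+2$, then by Lemma~\ref{unitrade} the set $\langle T_{\ov i}\rangle\setminus T_{\ov i}$ is a nonempty $[t-1]$-unitrade of volume $2^{t+1}-\vol(T_{\ov i})=1.5\cdot2^{t-1}-2^{(t-1)/2}$, which for $t\ge5$ lies strictly between $2^{t-1}$ and $1.5\cdot2^{t-1}$ and so contradicts Lemma~\ref{gaps}. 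Hence $\af(T_{\ov i})\ge t+3$ and $\af(T)\ge t+4$. Running the same computation over all admissible replications and all volumes in $(2^{t+1},2.5\cdot2^t)$ leaves $\vol(T)=4.5\cdot2^{t-1}=2^{t+1}+2^{t-2}$ as the only survivor, and that value is eliminated by Lemma~\ref{rx=vol/2} exactly as in your reduction principle. Incorporating this step closes your fatal gap and also removes the citation problem: Lemma~\ref{rx=vol/2} is then needed only at the two specific volumes $3.5\cdot2^{t-1}=2^{t+1}-2^{t-2}$ and $4.5\cdot2^{t-1}=2^{t+1}+2^{t-2}$, for which the hypothesis of Lemma~\ref{rem:t-4}(i) is satisfied.
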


\begin{proof}{%
We proceed by induction on $t$.
For $t=1$, there is no trade satisfying the assumptions, and $t=2,3$ has been settled in Lemma~\ref{t=2,3}.
Hence we assume that $t\ge4$.

Since shifts do not change the affine rank of trades, we may assume that $T$ is reduced.
 As $T$ is reduced, $r_{i}\le\vol(T)/2<2.5\cdot2^{t-1}$ for all $i\in\found(T)$.
 If there exists some $i\in\found(T)$ with $r_{i}\ne2^t$ and $r_{i}>1.5\cdot2^{t-1}$,
 then $T_{i}$ is a simple $[t-1]$-trade with $\vol(T_{i})\ne2^t$ and $1.5\cdot2^{t-1}<\vol(T_{i})<2.5\cdot2^{t-1}$.
 So by the induction hypothesis, $\af(T_{i})\ge t+3$.
 Therefore, $\af(T)\ge t+4$, and we are done.
Hence we can assume that
 \begin{equation}\label{r_{i}=2cases}
\hbox{for all $ i \in\found(T)$, either $r_{i}=2^t$  or $r_{i}\le1.5\cdot2^{t-1}$.}
\end{equation}
So it suffices to consider the following two cases.

\noindent{\bf Case 1.} There exist some $ i \in\found(T)$ with $r_{i}=2^t$.

As we assumed that $T$ is reduced, $\vol(T)\ge2r_{i}=2^{t+1}$, so $2\cdot2^{t-1}<\vol(T)-r_{i}<3\cdot2^{t-1}$. If further, $\vol(T_{\ov i})=\vol(T)-r_{i}<2.5\cdot2^{t-1}$, then by the induction hypothesis,
 $\af(T_{\ov i})\ge t+3$, and so we are done. Therefore, we  assume that $\af(T_{\ov i})=t+2$ and $2.5\cdot2^{t-1}\le\vol(T_{\ov i})<3\cdot2^{t-1}$.
 Then by Lemma~\ref{unitrade}, there exists a $[t-1]$-unitrade $T'$ with
 $2^{t-1}< \vol(T')=2^{t+1}-\vol(T_{\ov i})\le1.5\cdot2^{t-1}.$
 By Lemma~\ref{gaps}, $\vol(T')=1.5\cdot2^{t-1}$ and so $\vol(T_{\ov i})=2.5\cdot2^{t-1}$ implying that $\vol(T)=4.5\cdot2^{t-1}$.
 If $\af(T)\le t+3$, then by Lemma~\ref{rx=vol/2}, $r_{j}=4.5\cdot2^{t-2}$ for some $j$, which is impossible in view of \eqref{r_{i}=2cases}.
 So $\af(T)\ge t+4$ and we are done.

\noindent{\bf Case 2.} For all $i\in\found(T)$, $r_{i}\le1.5\cdot2^{t-1}$.

Applying Lemma~\ref{gaps} to $T_{i}$  we obtain that $r_{i}=2^{t-1}$ or $1.5\cdot2^{t-1}$  for all $i\in\found(T)$.
If for all $i\in\found(T)$, $r_{i}=2^{t-1}$, then we are done by  Lemma~\ref{rx>2t-1}. So assume that $r_{i}=1.5\cdot2^{t-1}$ for some $i\in\found(T)$.
It follows that $$1.5\cdot2^{t-1}<\vol(T_{\ov i})=\vol(T)-r_{i}<3.5\cdot2^{t-1},~~\vol(T_{\ov i})\ne2.5\cdot2^{t-1}.$$
Note that we  also have  $\vol(T_{\ov i})\ne2\cdot2^{t-1}$ (since otherwise $\vol(T)=3.5\cdot2^{t-1}$ and so Lemma~\ref{rx=vol/2} implies the existence of some $j\in\found(T)$ with $r_{j}=3.5\cdot2^{t-2}$, hence a contradiction). Therefore, if $\vol(T_{\ov i})<2.5\cdot2^{t-1}$, then $T_{\ov i}$ satisfies the induction hypothesis, and so
$\af(T_{\ov i})\ge t+3$ implying that $\af(T)\ge t+4$.
Now suppose that $\vol(T_{\ov i})>2.5\cdot2^{t-1}$.
%In this case we show that $\af(T_{\ov i})\ge t+3$. Otherwise
Then $\af(T_{\ov i})\ge\left\lceil\log_2(2\vol(T_{\ov i}))\right\rceil=t+2$.
If $\af(T_{\ov i})=t+2$, then  by Lemma~\ref{unitrade},
$T'=\langle T_{\ov i}\rangle\setminus T_{\ov i}$ is a $[t-1]$-unitrade with $\vol(T')<1.5\cdot2^{t-1}$. So by Lemma~\ref{gaps},
$\vol(T')=2^{t-1}$ which in turn implies that $\vol(T)=4.5\cdot2^{t-1}$. Now Lemma~\ref{rx=vol/2} implies the existence of some $j\in\found(T)$ with $r_{j}=4.5\cdot2^{t-2}$, a contradiction. So $\af(T_{\ov i})\ge t+3$ and thus $\af(T)\ge t+4$.
}\end{proof}
Now, by Theorem~\ref{main} and Proposition~\ref{affine} we have the following corollary which answers an open problem of \cite{Kro:gaps}.
\begin{cor}
There do not exist simple $[t]$-trades $T$ with $2^t<\vol(T)<2^{t+1}$ of Type\,{\rm(B)}.
\end{cor}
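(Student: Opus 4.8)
The plan is to argue by contradiction, feeding the volume characterization of Type\,(B) trades into the affine-rank lower bound of Theorem~\ref{main}. Suppose, for contradiction, that a simple $[t]$-trade $T$ of Type\,(B) exists with $2^t<\vol(T)<2^{t+1}$. Since $\vol(T)<2^{t+1}=2\cdot 2^t$, the trade $T$ is small, so Lemma~\ref{gaps} forces $\vol(T)=2^{t+1}-2^i$ for some $i\in\{0,1,\ldots,t-1\}$. Thus $T$ falls under the hypotheses of Proposition~\ref{affine}.

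Next I would invoke Proposition~\ref{affine}(ii): because $T$ is of Type\,(B), it yields simultaneously the index restriction $(t-1)/2\le i\le t-2$ and the exact affine rank $\af(T)=t+3$. The index restriction is exactly what locates the volume inside the window of Theorem~\ref{main}. From $i\le t-2$ we get $2^i\le 2^{t-2}<2^{t-1}$, hence
$$\vol(T)=2^{t+1}-2^i>2^{t+1}-2^{t-1}=1.5\cdot 2^t,$$
while trivially $\vol(T)<2^{t+1}<2.5\cdot 2^t$, and $\vol(T)\ne 2^{t+1}$ since $2^i>0$. Therefore $\vol(T)$ lies in the open interval $(1.5\cdot 2^t,\,2.5\cdot 2^t)$ and differs from the excluded value $2^{t+1}$.

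With $\vol(T)$ placed in this window, I would apply Theorem~\ref{main} to conclude $\af(T)\ge t+4$, which directly contradicts the value $\af(T)=t+3$ obtained above from Proposition~\ref{affine}(ii); this completes the argument. I expect no genuine obstacle in the corollary itself: all of the difficulty has already been absorbed into Theorem~\ref{main} (whose proof relies on the replication/projection bookkeeping of Lemmas~\ref{rx>2t-1} and~\ref{rx=vol/2} together with the computational Lemma~\ref{l:computational}) and into the rank computation for Type\,(B) in Proposition~\ref{affine}(ii). The only point warranting care is the elementary estimate showing that a Type\,(B) volume always lands strictly inside the Theorem~\ref{main} window and avoids $2^{t+1}$, which is precisely the inequality displayed above.
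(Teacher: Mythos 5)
Your proof is correct and follows exactly the paper's own route: the paper derives this corollary directly from Theorem~\ref{main} and Proposition~\ref{affine}(ii), which is precisely the contradiction between $\af(T)=t+3$ and $\af(T)\ge t+4$ that you spell out. The only difference is that you make explicit the elementary volume estimate $1.5\cdot 2^t<2^{t+1}-2^i<2.5\cdot 2^t$ placing Type\,(B) volumes inside the window of Theorem~\ref{main}, which the paper leaves implicit.
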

The following corollary  will be used in the next section.

\begin{cor}\label{c:form(i)}
There do not exist simple $[t]$-trades $T$ with $\vol(T)=2^{t+1}+2^i$ for $(t-1)/2\le i\le t-4$.
\end{cor}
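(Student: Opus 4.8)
The plan is to combine the main theorem of this section (Theorem~\ref{main}) with the structural dichotomy of Proposition~\ref{affine}, exactly as the preceding corollary on Type\,(B) trades does. The target statement asserts the non-existence of simple $[t]$-trades whose volume is $2^{t+1}+2^i$ for $(t-1)/2\le i\le t-4$. The key observation is that such a volume lies strictly between $1.5\cdot 2^t$ and $2.5\cdot 2^t$ and is distinct from $2^{t+1}$, so Theorem~\ref{main} forces any such trade to have large affine rank; I then intend to show that Lemma~\ref{rem:t-4} pins the affine rank to $t+3$, producing the contradiction.

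First I would verify the volume range. Since $i\le t-4$ we have $2^i\le 2^{t-4}=\tfrac{1}{16}2^t$, hence
\[
\vol(T)=2^{t+1}+2^i\le 2^{t+1}+\tfrac{1}{16}2^t<2.5\cdot 2^t,
\]
and clearly $\vol(T)=2^{t+1}+2^i>2^{t+1}=2\cdot 2^t>1.5\cdot 2^t$, with $\vol(T)\ne 2^{t+1}$ because $2^i\ge 1$. Thus $T$ satisfies precisely the hypotheses of Theorem~\ref{main}, which gives $\af(T)\ge t+4$.

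Next I would extract a contradiction from this lower bound on the affine rank. The natural tool is Lemma~\ref{rem:t-4}(ii): its hypothesis is exactly $2^{t+1}<\vol(T)<2^{t+1}+2^{t-3}$, which holds here since $2^i\le 2^{t-4}<2^{t-3}$. Passing to the associated $[t]$-unitrade $T'$ (the unsigned support $T_+\cup T_-$, which has the same volume and affine span, so $\af(T')=\af(T)$), Lemma~\ref{rem:t-4}(ii) asserts that any such unitrade has $\af(T')=t+3$. This directly contradicts $\af(T)=\af(T')\ge t+4$, completing the argument.

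The main subtlety I anticipate is bookkeeping around the index ranges and the passage from the trade $T$ to its unitrade $T'$: one must confirm that the constraint $i\le t-4$ (rather than the wider $i\le t-2$ appearing in Proposition~\ref{affine}(ii)) is exactly what places $\vol(T)$ below the threshold $2^{t+1}+2^{t-3}$ required by Lemma~\ref{rem:t-4}(ii), and that the lower constraint $(t-1)/2\le i$ is consistent but not actually needed for the contradiction here—it merely records the admissible Type\,(B) range inherited from Proposition~\ref{affine}(ii). Everything else is a direct citation of Theorem~\ref{main} and Lemma~\ref{rem:t-4}, so no genuinely hard step remains.
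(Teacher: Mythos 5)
Your proof is correct and follows essentially the same route as the paper: apply Theorem~\ref{main} to get $\af(T)\ge t+4$, then pass to the associated unitrade and invoke Lemma~\ref{rem:t-4}(ii) to get $\af(T)=t+3$, a contradiction. Your extra bookkeeping (checking the volume window and noting that the lower bound $(t-1)/2\le i$ is inherited from the unitrade characterization rather than needed for the contradiction) is accurate but does not change the argument.
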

\begin{proof}{Suppose for a contradiction that $T$ is a simple $[t]$-trade with $\vol(T)=2^{t+1}+2^i$, $(t-1)/2\le i\le t-4$. By Theorem~\ref{main}, $\af(T)\ge t+4$.
On the other hand, let $T'$ be the unitrade associated with $T$. By Lemma~\ref{rem:t-4}(ii), $\af(T)=\af(T')= t+3$, a contradiction.
}\end{proof}

%===================================================================
\section[Spectrum of volumes of simple {[t]}-trades ...]{Spectrum of volumes of simple $[t]$-trades between $2\cdot2^t$ and $2.5\cdot2^t$} \label{s:spectr}

Based on the characterization of codewords of Reed-Muller code with weights within the range $2$ and $2.5$ times the minimum distance by Kasami {\em et al.} \cite{KTA:76}, the following was obtained in \cite{Kro:gaps}.
\begin{thm}\label{l:Kro2.5} If the volume of a $[t]$-trade is between $2\cdot2^t$ and $2.5\cdot2^t$, then it has one of the following forms:
\begin{itemize}
\item[\rm(i)] $2^{t+1}+2^i$ for $i=\lceil(t-1)/2\rceil,\ldots,t-2$;
   \item[\rm(ii)] $2^{t+1}+2^{t-1}-2^i$ for $i=0,\ldots,t-2$;
  \item[\rm(iii)]  $2^{t+1}+2^{t-1}-3\cdot2^i$ for $i=0,\ldots,t-3$.
\end{itemize}
\end{thm}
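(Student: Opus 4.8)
The plan is to reinterpret the volume constraint as a weight constraint on a Reed--Muller codeword and then feed it into the Kasami--Tokura--Azumi weight classification \cite{KTA:76}. I would first treat the simple case. Given a simple $[t]$-trade $(T_+,T_-)$, take $V=\found(T)$, $v=|V|$, and form the $[t]$-unitrade $T_+\cup T_-$, which has cardinality exactly $2\vol(T)$. By Lemma~\ref{l:unitrade-RM} this is the support of a codeword of $\RM(m,v)$ with $m=v-t-1$, whose minimum distance is $d=2^{v-m}=2\cdot2^t$. The hypothesis $2\cdot2^t<\vol(T)<2.5\cdot2^t$ then says precisely that we are looking at a codeword of weight $w=2\vol(T)$ lying strictly between $2d$ and $2.5\,d$.

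The heart of the argument is the classification in \cite{KTA:76} of all codewords of $\RM(m,v)$ of weight in $[2d,2.5\,d)$, which supplies a finite list of affine normal forms together with their weights. I would enumerate the admissible weights $w$ from that list, halve them, and check that the resulting volumes fall into exactly the three families (i)--(iii). The innermost range $2^{t+1}<\vol(T)<2^{t+1}+2^{t-3}$ is already covered by Lemma~\ref{rem:t-4}(ii), which yields the volumes $2^{t+1}+2^i$ with $(t-1)/2\le i\le t-4$; the remaining normal forms contribute the rest of family (i) up to $i=t-2$ together with the two descending families (ii) and (iii). In each case the admissible range of the exponent $i$ is pinned down by combining the two-sided bound $2d<w<2.5\,d$ with the structural constraints (of the shape $\nu\ge3$, $v\ge m+2\nu-2$) attached to the normal forms, and this is what forces the endpoints $\lceil(t-1)/2\rceil$, $t-2$ and $t-3$.

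I expect the matching in the previous paragraph to be delicate but essentially routine: the ceiling and parity conventions must be tracked carefully so that no admissible volume is missed and no spurious one is introduced, but this is pure bookkeeping once \cite{KTA:76} is in hand. The genuine obstacle is the passage from an arbitrary $[t]$-trade to the codeword picture. The identity ``cardinality $=2\vol(T)$'' used above holds only for simple trades: for a trade with repeated blocks the naturally associated unitrade $\mathrm{odd}(T_+\uplus T_-)$ may have cardinality strictly below $2\vol(T)$, so its Reed--Muller weight no longer records the volume. Removing this gap --- that is, showing that in the window $(2\cdot2^t,\,2.5\cdot2^t)$ the volume of a general $[t]$-trade is still constrained exactly as in the simple case --- is the part that lies beyond the direct Reed--Muller translation, and is where I would expect the real work of \cite{Kro:gaps} to be concentrated.
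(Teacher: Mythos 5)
Your reconstruction for simple trades is correct and is exactly the route behind the paper's statement: the paper does not reprove this theorem at all, but imports it from \cite{Kro:gaps}, and the proof there is precisely your translation (simple $[t]$-trade $\to$ unitrade $T_+\cup T_-$ of cardinality $2\vol(T)$ $\to$ codeword of $\RM(v-t-1,v)$ of weight strictly between $2d$ and $2.5\,d$, where $d=2^{t+1}$) followed by the Kasami--Tokura--Azumi weight classification of \cite{KTA:76} and the bookkeeping that matches the admissible weights to the families (i)--(iii). Your use of Lemma~\ref{rem:t-4}(ii) for the innermost window is also consistent with how the paper handles that range.

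One correction to your final paragraph, which contains a genuine misconception rather than a gap in your argument: the passage to trades with repeated blocks is not ``where the real work of \cite{Kro:gaps} is concentrated'' --- it is not there at all. The paper's introduction states explicitly that even the gaps conjecture below $2\cdot2^t$ remains open for trades with repeated blocks, and the abstract describes the present result as the spectrum of volumes of \emph{simple} trades up to $2.5\cdot2^t$; likewise Lemma~\ref{gaps} is stated for unitrades and, ``in particular,'' simple trades. So Theorem~\ref{l:Kro2.5}, despite its literal wording, must be read with ``simple'' (equivalently, as a statement about $[t]$-unitrades, which is the form proved in \cite{Kro:gaps}). You correctly identified that the identity ``cardinality $=2\vol(T)$'' breaks down for multisets and that the Reed--Muller translation therefore cannot reach non-simple trades; the resolution is that the theorem's intended scope is simple trades, not that the cited reference closes that gap.
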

In Corollary~\ref{c:form(i)}, we showed that $[t]$-trades with volumes of the form (i) do not exist (except for $i=t-2$ and $t-3$ which can be represented in the form (ii) and (iii), respectively).
In this section, we show by construction that they do exist with volumes of the forms (ii) and (iii). So the spectrum of volumes of $[t]$-trades in the range $2\cdot2^t$ and $2.5\cdot2^t$ is completely determined. For the construction, we employ the following observation of \cite{Kro:gaps}.

\begin{lem}  \label{T1DeltaT2}
 Assume that $(T_+, T_-)$ and $(T'_+, T'_-)$ are two different simple $[t]$-trades such that $T_+\cap T'_+=T_-\cap T'_-=\emptyset$. Then
 $\big((T_+ \cup T'_+)\setminus(T_- \cup T'_-),\, (T_- \cup T'_-)\setminus(T_+ \cup T'_+)\big)$ is a
  simple $[t]$-trade.
\end{lem}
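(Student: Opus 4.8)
The plan is to work entirely in the group-ring representation \eqref{eq:pol}, treating each trade as a $\mathbb Z$-valued function on $2^V$. Because $T=(T_+,T_-)$ is simple, its legs are sets, so the coefficient of a block $X$ is
$$\tau_X(T)=[X\in T_+]-[X\in T_-]\in\{-1,0,1\},$$
where $[\,\cdot\,]$ denotes the Iverson bracket; the same holds for $T'=(T'_+,T'_-)$. The first, essentially free, observation is that the defining condition \eqref{eq:def} is linear in the coefficients $\tau_X$, so the pointwise sum $T+T'$ again satisfies \eqref{eq:def} and hence is a $[t]$-trade.

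The substantive step is to verify that $T+T'$ is \emph{simple}, i.e.\ that each of its coefficients again lies in $\{-1,0,1\}$. Writing
$$\tau_X(T+T')=\big([X\in T_+]+[X\in T'_+]\big)-\big([X\in T_-]+[X\in T'_-]\big),$$
I would invoke the hypotheses: $T_+\cap T'_+=\emptyset$ forces $[X\in T_+]+[X\in T'_+]\in\{0,1\}$, while $T_-\cap T'_-=\emptyset$ forces $[X\in T_-]+[X\in T'_-]\in\{0,1\}$. Thus every coefficient of $T+T'$ is a difference of two $\{0,1\}$-values and so lies in $\{-1,0,1\}$, as needed.

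It then remains only to identify the two legs. A block $X$ receives coefficient $+1$ precisely when $X\in T_+\cup T'_+$ and $X\notin T_-\cup T'_-$, coefficient $-1$ in the symmetric case, and coefficient $0$ when it lies in both $T_+\cup T'_+$ and $T_-\cup T'_-$ (such a block cancels). Hence the positive and negative legs of $T+T'$ are exactly $(T_+\cup T'_+)\setminus(T_-\cup T'_-)$ and $(T_-\cup T'_-)\setminus(T_+\cup T'_+)$, which is the pair in the statement; these sets are disjoint by construction, so the pair is a bona fide simple $[t]$-trade.

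There is no genuine obstacle here: the argument is a short bookkeeping computation, and the only place requiring care is the coefficient bound, which is exactly where the disjointness hypotheses $T_+\cap T'_+=T_-\cap T'_-=\emptyset$ are used. (The hypothesis that the two trades be different is not needed for the coefficient bound; for nonvoid trades it is already implied by $T_+\cap T'_+=\emptyset$, and it merely excludes the degenerate case $T=T'$.)
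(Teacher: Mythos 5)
Your proof is correct and complete. Be aware, though, that the paper itself does not prove Lemma~\ref{T1DeltaT2} at all: it is quoted as an observation from \cite{Kro:gaps}, so there is no in-paper argument to compare yours against. Your route --- viewing both trades as $\mathbb{Z}$-valued functions in the form \eqref{eq:pol}, using the linearity of the defining condition \eqref{eq:def} to conclude that $T+T'$ again satisfies it, and invoking the two disjointness hypotheses exactly where they are needed, namely to keep every coefficient of $T+T'$ in $\{-1,0,1\}$ --- is the natural argument, and reading off the blocks of coefficient $+1$ and $-1$ does yield precisely the stated legs, which are disjoint by construction. Two tiny remarks: a block has coefficient $0$ either by cancellation or by lying in none of the four legs (your wording mentions only the first case), and your closing parenthetical is right that ``different'' plays no role in the verification --- for non-void trades it already follows from $T_+\cap T'_+=\emptyset$, and even in the degenerate case where $T+T'$ vanishes identically the conclusion still holds, since the void trade is a (simple) $[t]$-trade by the paper's conventions.
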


\begin{thm} \label{TradeContruaction} There exist simple $[t]$-trades of volumes:
\begin{itemize}
  \item[\rm(i)] $2^{t+1}+2^{t-1}-2^i$ for $i=0,\ldots,t-2$;
  \item[\rm(ii)]  $2^{t+1}+2^{t-1}-3\cdot2^i$ for $i=0,\ldots,t-3$.
\end{itemize}
 \end{thm}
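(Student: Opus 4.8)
The plan is to reduce everything to two ``odd-volume'' base cases and then generate the rest by repeated doubling. The starting observation is the factorization
\[ 2^{t+1}+2^{t-1}-2^{i}=2^{i}\bigl(2^{s+1}+2^{s-1}-1\bigr), \qquad 2^{t+1}+2^{t-1}-3\cdot2^{i}=2^{i}\bigl(2^{s+1}+2^{s-1}-3\bigr), \]
with $s=t-i$: as $i$ runs over $\{0,\dots,t-2\}$ the index $s$ runs over $\{2,\dots,t\}$, and as $i$ runs over $\{0,\dots,t-3\}$ it runs over $\{3,\dots,t\}$. By Lemma~\ref{l:dbl}, if $P$ is a simple $[s]$-trade whose foundation misses an element $i$, then $(1-x_i)P$ is a simple $[s+1]$-trade of volume $2\vol(P)$ (simplicity is clear since $i$ is fresh). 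Applying this $t-s$ times with new elements multiplies the volume by $2^{t-s}=2^{i}$ and raises the strength to $t$. Hence it suffices to construct, for every $s\ge2$, a simple $[s]$-trade of volume $2^{s+1}+2^{s-1}-1$, and, for every $s\ge3$, a simple $[s]$-trade of volume $2^{s+1}+2^{s-1}-3$.

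For the base cases I would glue two explicit trades by Lemma~\ref{T1DeltaT2}. The first is a simple $[s]$-trade $P$ of volume $1.5\cdot2^{s}=3\cdot2^{s-1}$, obtained from the simple $[1]$-trade $R$ of volume $3$ of Example~\ref{ex:2} by applying Lemma~\ref{l:dbl} a total of $s-1$ times with fresh elements. The second is a minimum $[s]$-trade $Q$ of volume $2^{s}$, taken in the product form of Lemma~\ref{l:min}; recall that the support of such a $Q$ is an $(s+1)$-dimensional affine subspace and that its two legs are cut out by an affine character on that subspace. The idea is to place $Q$ so that its support meets the blocks of $P$ in a prescribed small set, all remaining points of the support involving fresh coordinates (hence distinct from every block of $P$), and to fix the character so that the shared blocks receive the sign opposite to the one they carry in $P$. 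Then the hypotheses $P_+\cap Q_+=P_-\cap Q_-=\emptyset$ of Lemma~\ref{T1DeltaT2} hold, the number of cancelled pairs equals the number of shared blocks, and the output is a simple $[s]$-trade of volume $1.5\cdot2^{s}+2^{s}-(\text{shared})$.

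For family~(i) I would share a single block: take any block $b$ of $P$ and let $Q$ be the shift $b\cdot\prod_{j=1}^{s+1}(1-x_{f_j})$ on fresh coordinates $f_1,\dots,f_{s+1}$, with legs labelled so that $b$ lies in the leg of $Q$ opposite to its leg in $P$. Every other block of $Q$ contains some $f_j$, so $b$ is the only coincidence and the volume becomes $2^{s+1}+2^{s-1}-1$. For family~(ii) I would share three blocks $b_1,b_2,b_3$ of $P$: choosing them so that $b_1$ and $b_2,b_3$ lie in opposite legs of $P$ and so that the plane they span admits a disjoint basis that extends by $s-1$ fresh singletons, I would build the minimum trade $Q$ supported on that plane together with the fresh directions. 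The character of a minimum trade forces the four plane-blocks $b_1,b_2,b_3,b_1\oplus b_2\oplus b_3$ to have signs $(+,-,-,+)$, which (after globally swapping the legs of $Q$ if needed) is exactly opposite to $P$ on $b_1,b_2,b_3$; provided $b_1\oplus b_2\oplus b_3$ is not a block of $P$, these three are the only coincidences and the volume is $2^{s+1}+2^{s-1}-3$.

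The step I expect to be the main obstacle is the bookkeeping that makes the minimum trade $Q$ simultaneously (a)~a genuine minimum trade, i.e.\ of the disjoint product form required by Lemma~\ref{l:min}, (b)~supported so that its intersection with the blocks of $P$ is exactly the prescribed one (in particular $b_1\oplus b_2\oplus b_3\notin P$ for family~(ii)), and (c)~equipped with the sign pattern needed for cancellation. Concretely this amounts to choosing the shared blocks of $P$ in sufficiently general position; since $P$ is completely explicit (it is built from $R$ by disjoint doublings), the needed blocks can be exhibited directly, and the finitely many smallest instances $s=2$ (volume $9$) and $s=3$ (volumes $17,18,19$) can in addition be confirmed by the enumeration of Section~\ref{s:comp}. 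Once $Q$ is in place, Lemma~\ref{T1DeltaT2} supplies validity and simplicity for free, and the doubling step of the first paragraph finishes the proof.
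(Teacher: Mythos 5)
Your proposal is correct, but it takes a genuinely different route from the paper's. The paper works directly for each pair $(t,i)$: it takes three $(t{+}1)$-dimensional linear spans $T_1$, $T_2$, $T_4$ with carefully chosen generators, splits each into legs by weight parity, applies Lemma~\ref{T1DeltaT2} twice, and computes the volume as $|T_1\oplus T_2\oplus T_4|/2$ by inclusion--exclusion over the pairwise and triple intersections of the three subspaces. You instead reduce to a single base case per family via the doubling operation of Lemma~\ref{l:dbl} (legitimate: multiplying by $(1-x_f)$ with $f$ fresh preserves simplicity, doubles the volume and raises $t$ by one), and then glue the explicit trade $P$ of volume $1.5\cdot 2^{s}$ with one minimum trade $Q$ of Lemma~\ref{l:min}, arranged to share exactly one block (family (i)) or exactly three blocks (family (ii)), each shared block carrying opposite signs in $P$ and $Q$. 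This buys a more local volume count (one or three cancellations instead of inclusion--exclusion over three subspaces) at the price of a ``general position'' choice of shared blocks, which you flag but do not carry out for family (ii); that step does go through, and here is the completion. In $P=R\cdot\prod_{j=1}^{s-1}(1-x_{g_j})$, where $R$ is the trade of Example~\ref{ex:2}, take $b_1=x_1x_2x_3\in P_+$ and $b_2=x_1x_2,\ b_3=x_2x_3\in P_-$. Then $b_1\oplus b_2=x_3$ and $b_1\oplus b_3=x_1$ are disjoint, so the plane through $b_1,b_2,b_3$ carries the minimum trade $x_2(x_1-1)(x_3-1)$, whose signs on $(b_1,b_2,b_3,b_1\oplus b_2\oplus b_3)$ are $(+,-,-,+)$, and the fourth point $b_1\oplus b_2\oplus b_3=x_2$ is not a block of $P$. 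Hence $Q=-\,x_2(x_1-1)(x_3-1)\prod_{j=1}^{s-1}(1-x_{f_j})$, with the $f_j$ fresh, together with $P$ satisfies the hypotheses of Lemma~\ref{T1DeltaT2}; exactly three blocks cancel, giving volume $3\cdot 2^{s-1}+2^{s}-3=2^{s+1}+2^{s-1}-3$ as required (family (i) needs no such care, since any block $b$ of $P$ works there). With this one line added, your argument is a complete and valid alternative proof of Theorem~\ref{TradeContruaction}.
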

\begin{proof}{(i) Let
\begin{align*}
   T_1&:=\left\langle\{1\},\ldots,\{t+1\}\right\rangle,\\
   T_2&:=\langle\{1\},\ldots,\{t-1\},\{t+2\},\{t+3\}\rangle.
\end{align*}
Define $T^+_1$ ($T^-_1$) to be the set of vectors of $T_1$ with an odd (even) weight  and $T^+_2$ ($T^-_2$) to be the set of vectors of $T_2$ with an even (odd) weight.
We have $T^+_1\cap T^+_2=T^-_1\cap T^-_2=\emptyset$. So $T_3=(T^+_3,T^-_3)$ with
$$T^+_3:=(T^+_1\cup T^+_2)\setminus(T^-_1\cup T^-_2),~~ T^-_3:=(T^-_1\cup T^-_2)\setminus(T^+_1\cup T^+_2)$$ is a $[t]$-trade of volume $|T_1\oplus T_2|/2$ where $\oplus$ denotes symmetric difference.
Now let
  $$T_4:=\langle\{1\},\ldots,\{i\},\{t\},\{t+4\},\ldots,\{2t-i+3\}\rangle,$$
  with $T^+_4$ ($T^-_4$) being the set of vectors of $T_4$ with an even (odd) weight.
We have $T^+_3\cap T^+_4=\emptyset$.
To see this, let $B\in T^+_3\cap T^+_4$. As $T^+_3\subseteq T^+_1\cup T^+_2$, we have $B\in T^+_4\cap(T^+_1\cup T^+_2)$.
The blocks of both $T^+_4$, $T^+_2$ have even weights while those of $T^+_1$ have  odd weights. It follows that $B\in T^+_4\cap T^+_2\subseteq \langle\{1\},\ldots,\{i\}\rangle$. So $B\in\langle\{1\},\ldots,\{i\}\rangle$ with an even weight  and so $B\in T^-_1$ which implies that
$B\not\in T^+_3$, a contradiction. Analogously we have $T^-_3\cap T^-_4=\emptyset$.
So $T_5:=(T^+_5,T^-_5)$ with
\begin{equation}\label{T5}
T^+_5:=(T^+_3\cup T^+_4)\setminus(T^-_3\cup T^-_4),~~ T^-_5:=(T^-_3\cup T^-_4)\setminus(T^+_3\cup T^+_4)
\end{equation}
is a $[t]$-trade similarly. For its volume we have
\begin{align*}
2\vol(T_5)&=|T_1\oplus T_2\oplus T_4|\\
&=|T_1|+|T_2|+|T_4|-2|T_1\cap T_2|-2|T_1\cap T_4|-2|T_2\cap T_4|+4|T_1\cap T_2\cap T_4|\\
&=2^{t+1}+2^{t+1}+2^{t+1}-2\cdot2^{t-1}-2\cdot2^{i+1}-2\cdot2^i+4\cdot2^i\\
&=2(2^{t+1}+2^{t-1}-2^i),
\end{align*}
as required.

(ii) Let $T_j=(T^+_j,T^-_j)$ for $j=1,2,3$ be as in the case (i) and
  $$ T_4:=\langle\{1\},\ldots,\{i\},\{t\},\{t+1\},\{t+4\},\ldots,\{2t-i+2\}\rangle,$$
  with $T^+_4$ ($T^-_4$) to be the set vectors of $T_4$ of even (odd) weight.
Here we have $T^+_3\cap T^+_4=T^-_3\cap T^-_4=\emptyset$. We define $T_5:=(T^+_5,T^-_5)$ similar to \eqref{T5}. So it is a $[t]$-trade with
\begin{align*}
2\vol(T_5)&=|T_1\oplus T_2\oplus T_4|\\
&=|T_1|+|T_2|+|T_4|-2|T_1\cap T_2|-2|T_1\cap T_4|-2|T_2\cap T_4|+4|T_1\cap T_2\cap T_4|\\
&=2^{t+1}+2^{t+1}+2^{t+1}-2\cdot2^{t-1}-2\cdot2^{i+2}-2\cdot2^i+4\cdot2^i\\
&=2(2^{t+1}+2^{t-1}-3\cdot2^i),
\end{align*}
as desired.
}\end{proof}
From Corollary~\ref{c:form(i)}, Theorems~\ref{l:Kro2.5} and \ref{TradeContruaction}, we have the following.
\begin{cor} The spectrum of volumes of $[t]$-trades $T$ with $2\cdot2^t<\vol(T)<2.5\cdot2^t$ is
$$\{2^{t+1}+2^{t-1}-2^i : i=0,\ldots,t-2\}\cup\{2^{t+1}+2^{t-1}-3\cdot2^i : i=0,\ldots,t-3\}.$$
\end{cor}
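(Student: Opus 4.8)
The plan is to prove the two inclusions separately: that every volume in the displayed set is realized by a simple $[t]$-trade, and that no $[t]$-trade in the range $2\cdot2^t<\vol(T)<2.5\cdot2^t$ has a volume outside that set. The achievability direction is immediate: Theorem~\ref{TradeContruaction}(i) produces simple $[t]$-trades of every volume $2^{t+1}+2^{t-1}-2^i$ with $0\le i\le t-2$, and Theorem~\ref{TradeContruaction}(ii) produces simple $[t]$-trades of every volume $2^{t+1}+2^{t-1}-3\cdot2^i$ with $0\le i\le t-3$. Hence each element of the claimed spectrum occurs.

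For the reverse inclusion I would start from Theorem~\ref{l:Kro2.5}, which tells us that any $[t]$-trade of volume strictly between $2\cdot2^t$ and $2.5\cdot2^t$ has volume of one of the three forms (i), (ii), (iii) listed there. Forms (ii) and (iii) are exactly the two sets appearing in the statement, so the only task is to dispose of form (i), namely the volumes $2^{t+1}+2^i$ for $\lceil(t-1)/2\rceil\le i\le t-2$. Here I invoke Corollary~\ref{c:form(i)}, which rules out $\vol(T)=2^{t+1}+2^i$ for every $i$ with $(t-1)/2\le i\le t-4$; this eliminates all of form (i) except the two largest indices $i=t-3$ and $i=t-2$. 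The key bookkeeping step is to check that these two surviving values are not genuinely new, since
$$2^{t+1}+2^{t-2}=2^{t+1}+2^{t-1}-2^{t-2},\qquad 2^{t+1}+2^{t-3}=2^{t+1}+2^{t-1}-3\cdot2^{t-3},$$
so that $i=t-2$ of form (i) coincides with $i=t-2$ of form (ii), and $i=t-3$ of form (i) coincides with $i=t-3$ of form (iii). Combining the elimination with these two overlap identities shows that the attainable volumes lie inside the displayed union, which completes the argument.

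There is no serious obstacle here, as all the substantive work is contained in Theorems~\ref{l:Kro2.5} and~\ref{TradeContruaction} and in Corollary~\ref{c:form(i)}; the only points requiring care are arithmetic and indexing. Specifically, one must verify that the eliminated range $(t-1)/2\le i\le t-4$ together with the two overlap identities exhausts all of form (i), and in particular that for small $t$, where the eliminated range may be empty, both surviving indices $t-3,t-2$ still fall under the overlap identities (for instance at $t=4$ form (i) consists only of $i=t-2$). A second point to keep in mind is the simple-versus-general distinction: the exhaustiveness half relies on Corollary~\ref{c:form(i)}, which is established for \emph{simple} $[t]$-trades, so the spectrum statement is to be read for simple $[t]$-trades (the general case for trades with repeated blocks being outside the scope of the cited non-existence result).
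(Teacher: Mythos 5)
Your proposal is correct and follows exactly the paper's own route: the paper derives this corollary by citing the same three results (Theorem~\ref{l:Kro2.5} for the restriction to forms (i)--(iii), Corollary~\ref{c:form(i)} to eliminate form (i), and Theorem~\ref{TradeContruaction} for achievability), with the overlap identities $2^{t+1}+2^{t-2}=2^{t+1}+2^{t-1}-2^{t-2}$ and $2^{t+1}+2^{t-3}=2^{t+1}+2^{t-1}-3\cdot2^{t-3}$ noted in the surrounding text. Your added care about the indexing at the endpoints $i=t-3,\,t-2$ and about reading the statement for \emph{simple} $[t]$-trades (since Corollary~\ref{c:form(i)} only covers the simple case) is accurate and, if anything, makes the argument more complete than the paper's one-line derivation.
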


%===================================================================
\section{Characterization of small $[t]$-trades for $t=1,2$ }\label{s:vol3,6}

We say that two trades are \emph{equivalent}
if one is obtained from the other by some
permutation of the elements of $V$,
some  shifts, and, optionally, the swap of the two components $T_+$, $T_-$ of the trade.
In this section we characterize $[1]$-trades of volume $3$ and $[2]$-trades of volume $6$ up to equivalence.

\subsection[{[1]}-trades of volume 3]{[1]-trades of volume $3$} \label{s:vol3}
By the definition,
a small $[1]$-trade has volume
smaller than $4$.
Lemma~\ref{l:min} describes
the $[1]$-trades of minimum nonzero volume $2$;
the remaining value is considered in the following simple theorem.

\begin{thm}
\label{th:[1]3}
Every $[1]$-trade of volume $3$
is a shift of
$(\{Y_1,Y_2,Y_3\},\{Z_1,Z_2,Z_3\})$,
where
$Y_1$, $Y_2$, $Y_3$ are mutually disjoint,
$Z_1$, $Z_2$, $Z_3$ are also mutually disjoint,
$Y_1Y_2Y_3=Z_1Z_2Z_3$,
and $Y_i\ne Z_j$ for every $i,j\in\{1,2,3\}$.
\end{thm}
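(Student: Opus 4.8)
The plan is to normalize $T$ by a shift and then read off the required structure directly from the replication numbers. Since shifts form a group, the property ``being a shift of a trade of the stated form'' is itself shift-invariant, so it suffices to exhibit one convenient representative in the shift-orbit of $T$ that has that form. By Lemma~\ref{l:reduced} I may take this representative to be a reduced $[1]$-trade $T_0=(T_+,T_-)$ of volume $3$, because a shift is a bijection of $2^V$ and hence (together with Lemma~\ref{l:shift}) preserves both the volume and the disjointness of the two legs. Concretely, if $T_0 = S\oplus T$ is reduced and has the stated form, then $T = S\oplus T_0$ is a shift of a trade of that form, as required. I write $T_+=\{Y_1,Y_2,Y_3\}$ and $T_-=\{Z_1,Z_2,Z_3\}$ as multisets.

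The key step is that reducedness pins down every replication. Since $T_0$ is reduced, $r_\ell\le\tfrac12\vol(T_0)=\tfrac32$ for all $\ell\in\found(T_0)$; as $r_\ell$ is a positive integer on the foundation, this forces $r_\ell=1$. Consequently no element can lie in two blocks of the same leg: if $\ell\in Y_i\cap Y_j$ with $i\ne j$ (a case that also detects any repeated \emph{nonempty} block $Y_i=Y_j$, since the two multiset entries are counted separately), then $\ell$ would contribute at least $2$ to $r_\ell$, a contradiction. Hence $Y_1,Y_2,Y_3$ are pairwise disjoint, and the same argument applied to $T_-$ makes $Z_1,Z_2,Z_3$ pairwise disjoint. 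This is compatible with a repeated block only when that block is empty, which is exactly the coincidence the statement permits. This is the crux; everything else is bookkeeping.

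It remains to check the two relations between the legs. The identity $Y_1Y_2Y_3=Z_1Z_2Z_3$ is immediate from Lemma~\ref{l:[1]}: for a $[1]$-trade $\bigoplus_{X\in T_+\uplus T_-}X=\emptyset$, whence $Y_1\oplus Y_2\oplus Y_3=Z_1\oplus Z_2\oplus Z_3$ (using $-X=X$ over $\mathrm{GF}(2)$). Finally $Y_i\ne Z_j$ for all $i,j$, because the two legs of a trade are disjoint collections --- a common block would lie in both $T_+$ and $T_-$ --- and this disjointness survives the reducing shift by the remark above. These four facts together say precisely that $T_0$ has the asserted form, and therefore $T$ is a shift of such a trade.

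I expect no serious obstacle here: the entire argument turns on the single inequality $r_\ell\le\tfrac32$ forcing $r_\ell=1$. The only points that require care are the multiset conventions --- so that a repeated nonempty block is correctly flagged by the replication count, and so that Lemma~\ref{l:[1]} is read with multiplicities --- and the observation that the coincidences $Y_i=Y_j$ allowed by disjointness can occur only for empty blocks, so that the pairwise-disjointness clause of the statement is genuinely met.
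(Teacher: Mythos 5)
Your proof is correct and follows essentially the same route as the paper: the paper shifts element by element to force every replication into $\{0,1\}$, which is exactly what your appeal to Lemma~\ref{l:reduced} accomplishes (its proof is that very shift argument, and $r_\ell\le\tfrac32$ forces $r_\ell\le1$), after which the disjointness, the identity $Y_1Y_2Y_3=Z_1Z_2Z_3$ via Lemma~\ref{l:[1]}, and $Y_i\ne Z_j$ follow as you say. The only difference is that you spell out the bookkeeping (multiset conventions, leg-disjointness surviving shifts) that the paper's one-line proof leaves implicit.
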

\begin{proof}{
Let $(T_+,T_-)$ be a $[1]$-trade,
then every element $i$ occurs
in the same number of blocks
from $T_+$ and from $T_-$.
If this number is $2$ or $3$,
then we consider the $x_i$-shift,
for which it is $1$ or $0$.
Making this for all elements,
we get a $[1]$-trade satisfying
the conditions
from the conclusion of the theorem.}
\end{proof}

%===================================================================
\subsection[{[2]}-trades of volume 6]{$[2]$-trades of volume $6$}\label{s:vol6}
In the following four propositions, we define four types of $[2]$-trades of volume $6$.
The main result of this section states that every $[2]$-trade of volume $6$ is of one of these four types.

\begin{pro}\label{l:3-3}
Assume that a $[2]$-trade $T = (T_+,T_-)$ of volume $6$ is represented as
$$
T =
(1-XY_1)(1-XY_2)(1-XY_3)-
(1-XZ_1)(1-XZ_2)(1-XZ_3)
$$
where $X$, $Y_1$, $Y_2$, $Y_3$ are mutually disjoint sets,
$X$, $Z_1$, $Z_2$, $Z_3$ are also mutually disjoint sets,
$Y_1$, $Y_2$, $Y_3$, $Z_1$, $Z_2$, $Z_3$
are mutually different nonempty sets,
and $Y_1Y_2Y_3=Z_1Z_2Z_3$
(we note that $X$ can be empty
and a relation of type
$Z_i=Y_jY_k$ is possible).
Then, every extension $T'$ of
$T $
has the same form, up to a shift.
\end{pro}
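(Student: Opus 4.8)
The plan is to first convert the notion of ``extension'' into the combinatorial data of a $[1]$-subtrade, and then to show that every such subtrade re-expresses $T$ in the $(3\text{-}3)$ shape.

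\textbf{Reduction to $[1]$-subtrades.} Let $T'$ be an extension of $T$ by a new element $x_\ell$, $\ell\notin\found(T)$, so that the $\ell$-projection of $T'$ is $T$ and $\vol(T')=\vol(T)$. By Corollary~\ref{c:decomp} applied to $T'$ with $i=\ell$, I would write $T'=T-(1-x_\ell)P'=T-P'+x_\ell P'$, where $P'$ is a $[1]$-trade not involving $\ell$. Since the projection must return $T$ with no loss of volume, the blocks of $P'$ must be actual blocks of $T$ with the inherited signs, i.e.\ $P'_+\subseteq T_+$ and $P'_-\subseteq T_-$, and $P'$ is then a $[1]$-\emph{subtrade} of $T$; conversely, adjoining $x_\ell$ to the blocks of any $[1]$-subtrade produces a simple trade of the same volume because $x_\ell$ is fresh. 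Thus extensions of $T$ correspond exactly to the $[1]$-subtrades $P'$, and the whole statement reduces to: for every $[1]$-subtrade $P'$, the trade $T-(1-x_\ell)P'$ is of $(3\text{-}3)$ form up to a shift.

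\textbf{The construction behind every extension.} Expanding the two products and cancelling the common blocks $\emptyset$ and $XY_1Y_2Y_3=XZ_1Z_2Z_3$, one gets
\[
T_+=\{\,Y_1Y_2,\,Y_1Y_3,\,Y_2Y_3,\ XZ_1,\,XZ_2,\,XZ_3\,\},\qquad
T_-=\{\,XY_1,\,XY_2,\,XY_3,\ Z_1Z_2,\,Z_1Z_3,\,Z_2Z_3\,\}.
\]
The key observation is that appending $\ell$ to a chosen subset of the \emph{defining parameters} $X,Y_1,Y_2,Y_3,Z_1,Z_2,Z_3$ manifestly keeps the $(3\text{-}3)$ form: adjoining $\ell$ to $X$ adds $x_\ell$ to exactly the six blocks containing $X$ and yields the $(3\text{-}3)$ form with $X$ replaced by $Xx_\ell$; adjoining $\ell$ to a single $Y_k$ and a single $Z_m$ adds $x_\ell$ to the three blocks meeting $Y_k$ and the three meeting $Z_m$, giving $(3\text{-}3)$ with $Y_k\mapsto Y_kx_\ell$, $Z_m\mapsto Z_mx_\ell$. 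To preserve the constraint $Y_1Y_2Y_3=Z_1Z_2Z_3$ the number of $Y$'s and the number of $Z$'s that receive $\ell$ must have equal parity, and to keep the two disjointness families intact one may adjoin $\ell$ to $X$ alone, or to at most one $Y_k$ together with at most one $Z_m$. Finally, since $(3\text{-}3)$ is visibly closed under the $x_\ell$-shift (which toggles $\ell$-membership of all blocks), each such parameter-append and its shifted ``complement'' $T-P'$ are both of the required form; the void subtrade gives $T$ itself and the full subtrade $P'=T$ gives the $x_\ell$-shift of $T$.

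\textbf{Completeness (the main obstacle).} The real work, and the part I expect to be hardest, is the converse: that \emph{every} $[1]$-subtrade $P'$ arises — up to the $x_\ell$-shift — from such a parameter-append. The subtrades are genuinely more numerous than the two ``halves'' $(\{Y_iY_j\},\{Z_iZ_j\})$ and $(\{XZ_i\},\{XY_i\})$ one first writes down (for instance $(\{XZ_1,Y_1Y_2,Y_1Y_3\},\{XY_1,Z_1Z_2,Z_1Z_3\})$ is balanced as well), so one cannot simply enumerate two cases. I would prove completeness by a linear count over $\mathrm{GF}(2)$: record, for each parameter, which of the twelve blocks it toggles, and show that the resulting incidence vectors, together with the all-ones (shift) vector, span precisely the space of supports of balanced sub-collections; the integral balance conditions of Lemma~\ref{l:[1]}, tested first at a coordinate of $X$ (which equates the number of chosen $XZ_i$ and $XY_i$) and then at the coordinates of $s=Y_1Y_2Y_3$ (which pins down the chosen $Y$-pairs and $Z$-pairs), reconstruct the unique set of parameters to append, with Theorem~\ref{th:[1]3} excluding spurious volume-$2$ subtrades via the hypothesis $Y_i\neq Z_j$. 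The delicate bookkeeping is to verify that the new parameters remain valid — mutually disjoint within each family, nonempty, and distinct — in the degenerate configurations explicitly permitted by the statement ($X=\emptyset$ or $Z_i=Y_jY_k$), where some appends must instead be read off after a shift or are forced to collapse the volume below $6$. By contrast, the forward computations of the previous paragraph are routine manipulations in the group ring $\mathbb{Z}[(2^V,\oplus)]$.
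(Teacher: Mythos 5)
Your reduction of extensions to $[1]$-subtrades of $T$, normalized to volume at most $3$ via the $x_\ell$-shift, is exactly the skeleton of the paper's proof (which phrases it through Lemma~\ref{l:sub} as $T'_+=S_+\uplus x_s Q_+$, $T'_-=S_-\uplus x_s Q_-$ with $Q$ a $[1]$-subtrade of volume $0$, $2$ or $3$), and your final list of admissible subtrades --- the void one, the ``append to $X$'' one, the ``append to one $Y_k$ and one $Z_m$'' ones, and their shift-complements --- is the correct answer. But you never prove completeness, and completeness is the entire content of the proposition. The paper establishes it by a direct case analysis: every candidate volume-$2$ subtrade forces an identity such as $Z_3=Y_3$ or $Z_1=Y_3$, contradicting the hypothesis that $Y_1,Y_2,Y_3,Z_1,Z_2,Z_3$ are pairwise distinct, so volume $2$ is impossible; and every volume-$3$ subtrade is shown, after the shift and relabelling, to be either $(\{XZ_1,XZ_2,XZ_3\},\{XY_1,XY_2,XY_3\})$ or one of the mixed cases that collapse to the displayed forms. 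Your text replaces all of this with ``I would prove completeness by a linear count over $\mathrm{GF}(2)$\,\ldots'' and ``the delicate bookkeeping is to verify\,\ldots'' --- a plan, not an argument, and you yourself flag it as the hardest part.

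Moreover, the plan as stated cannot work. The set of supports of $[1]$-subtrades of $T$ is \emph{not} a $\mathrm{GF}(2)$-subspace, because the defining condition is an integral equality of replications, not a parity condition. Concretely, the append-to-$(Y_1,Z_1)$ subtrade $(\{XZ_1,Y_1Y_2,Y_1Y_3\},\{XY_1,Z_1Z_2,Z_1Z_3\})$ and the append-to-$(Y_1,Z_2)$ subtrade $(\{XZ_2,Y_1Y_2,Y_1Y_3\},\{XY_1,Z_1Z_2,Z_2Z_3\})$ have symmetric difference $(\{XZ_1,XZ_2\},\{Z_1Z_3,Z_2Z_3\})$, which satisfies every mod-$2$ (unitrade) condition but is not a $[1]$-trade: the elements of the nonempty set $Z_3$ occur twice on the negative side and never on the positive side. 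So the parameter vectors cannot ``span precisely the space of supports of balanced sub-collections'' if balanced means subtrade; and if balanced only means mod-$2$ balanced, then lying in the span is necessary but far from sufficient, and you are thrown back onto exactly the integral case analysis (the paper's Cases 1, 2a, 2b) that you deferred. Two smaller inaccuracies point the same way: Lemma~\ref{l:[1]} supplies only the XOR condition, not the ``integral balance conditions'' your sketch attributes to it; and when blocks coincide (e.g., $XZ_1=Y_2Y_3$, explicitly allowed by the hypothesis) the legs are multisets, so your inclusion $P'_+\subseteq T_+$ must be read with multiplicities, as the paper is careful to do.
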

\begin{proof}{
We have
$$T_+ = \{XZ_1,XZ_2,XZ_3,Y_1Y_2,Y_2Y_3,Y_1Y_3\},\quad
T_- = \{XY_1,XY_2,XY_3,Z_1Z_2,Z_2Z_3,Z_1Z_3\}.$$

By Lemma~\ref{l:sub} and the definition, an extension $(T'_+,T'_-)$ has the form $T'_+= S_+ \uplus x_s Q_+$,
$T'_-= S_- \uplus x_s Q_-$,
where
$T_+= S_+ \uplus Q_+$,
$T_-= S_- \uplus Q_-$, and
$S=(S_+,S_-)$, $Q=(Q_+,Q_-)$ are $[1]$-trades.
(Note that the multiset union $\uplus$ is essential here,
as some blocks can have multiplicity $2$;
e.g., if $XZ_1=Y_2Y_3$.)
W.l.o.g., we may assume that $\vol(Q)\le3$ (otherwise, we consider the $x_s$-shift).
If it is $0$, the statement holds trivially; $1$ is not possible by Lemma~\ref{l:min}. So it suffices to consider the following two cases.

\noindent{\bf Case 1.} $\vol(Q)=2$.

 It is not difficult to see that $Q$ cannot be a subtrade of $T$.
Indeed, if
$Q_+ = \{Y_1Y_2,Y_1Y_3\}$
(similarly, $\{Y_1Y_2,Y_2Y_3\}$ or $\{Y_1Y_3,Y_2Y_3\}$),
then every element of $Y_1$ occurs twice in the blocks of $Q_+$.
The same should be true for $Q_-$;
so, either $Q_-$ contains $XY_1$, or $Q_- =\{Z_iZ_j,Z_iZ_k\}$.
In the first case,
utilizing the definition of a $[1]$-trade,
we see that the second block of $Q_-$ is $XY_1Y_2Y_3$,
which is not a block from $T_-$, a contradiction.
In the second case, taking into account that $Y_1Y_2Y_3=Z_iZ_jZ_k$,
we conclude that $Y_1=Z_i$, which does not fit the hypothesis of the proposition.

If
$Q_+ = \{XZ_1,XZ_2\}$
(similarly, $\{XZ_1,XZ_3\}$ or $\{XZ_2,XZ_3\}$), then the elements of $Z_3$ do not occur in the blocks of $Q_+$.
The same should be true for $Q_-$.
So, $Q_-$ does not contain $Z_1Z_3$ or $Z_2Z_3$. If it contains $Z_1Z_2$, then the second block is $X$, which is not from $T_-$, again a contradiction. Therefore,
$Q_+ = \{XY_i,XY_j\}$ and w.l.o.g.,
$Q_+ = \{XY_1,XY_2\}$.
But this leads to $Z_1Z_2=Y_1Y_2$,
and from $Z_1Z_2Z_3=Y_1Y_2Y_3$ we find that  $Z_3=Y_3$, which contradicts the hypothesis of the proposition.

If
$Q_+ = \{XZ_1,Y_1Y_2\}$
(similarly, every remaining case),
then we can assume that
$Q_- = \{XY_i,Z_jZ_k\}$
(the other cases are shown above).
From $XZ_1Y_1Y_2 = XY_iZ_jZ_k$ we see that $Q_- = \{XY_3,Z_2Z_3\}$.
We now see that every element occurs exactly twice in blocks of $Q_+ \cup Q_-$.
By the definition of a $[1]$-trade,
every element occurs exactly once in blocks of $Q_+$ (similarly,  $Q_-$).
But this means that $Z_1=Y_3$, a contradiction.

\noindent{\bf Case 2.} $\vol(Q)=3$ (and so  $\vol(S)=3$).

%(ii) {Assume the volume of $(Q_+,Q_-)$ (and hence, of  $(S_+,S_-)$) is $3$}.
% (up to the $x_s$-shift
% and a enumeration of $Z_1$, $Z_2$, $Z_3$).
Either $Q_+$, or $S_+$ contains $XZ_i$ and $XZ_j$ for some different $i$ and $j$.
W.l.o.g. we can assume that
$Q_+$ contains $XZ_1$, $XZ_2$. Consider the following two subcases.

(2a) {$Q_+ = \{XZ_1,XZ_2,XZ_3\}$}.
All elements of $Z_1Z_2Z_3$
occur exactly once
in the blocks of $Q_+$ and,
hence, in the blocks of $Q_-$.
So, $Q_-$ cannot have two blocks
from $Z_1Z_2$, $Z_1Z_3$, $Z_2Z_3$
and must have at least two blocks
from $XY_1$, $XY_2$, $XY_3$.
The third block of $Q_-$
is uniquely determined and
$Q_-=\{ XY_1,XY_2,XY_3\}$.
We see that the claim
of the proposition holds with
$$
T' =
(1-X'Y_1)(1-X'Y_2)(1-X'Y_3)-
(1-X'Z_1)(1-X'Z_2)(1-X'Z_3),
\qquad X'=x_sX.
$$

(2b) W.l.o.g., let {$Q_+ = \{XZ_1,XZ_2,Y_1Y_2\}$}.
We can assume that $Q_-$ contains
$XY_i, Z_1Z_j$ for some $i\in\{1,3\}$,
$j\in\{2,3\}$ (the other possibilities are similar or considered in the subcase (2a).
Then the third element of $Q_-$ is
$W=XZ_1 \oplus XZ_2 \oplus Y_1Y_2  \oplus XY_i  \oplus Z_1Z_j=XZ_2Z_jY_1Y_2Y_i$.

If $j=2$, then $W$ can only be $XY_2$, in which case
\begin{equation}\label{eq:j2}
T' =
(1-XY_1)(1-XY_2)(x_s-XY_3)-
(1-XZ_1)(1-XZ_2)(x_s-XZ_3).
\end{equation}
Then, the $x_s$-shift of
$T$ has the required form.

If $j=3$ and $i=3$, then $W=XZ_1$, which is not a block of $T_-$.

If $j=3$ and $i=1$, we have
$Q_-=\{ XY_1, Z_1Z_3, W\}$,
where
$W=XZ_2Z_3Y_2$
should be a block of $T_-$.
Clearly, $W\ne XY_2$
and $\ne Z_2Z_3$;
also $W\ne XY_1$
(as $Z_2Z_3\ne Y_1Y_2$
by the proposition hypothesis)
and, similarly, $W\ne XY_3$.
If $W=Z_1Z_2$, then $XY_2=Z_1Z_3$, which is possible, but then
$Q_-=\{XY_1,Z_1Z_3=XY_2, Z_1Z_2\}$
corresponds to (\ref{eq:j2}), considered above.
Finally, if $W=Z_1Z_3$, then we have $Z_1Z_2=XY_2$,
% and
% $Q_-=\{ XY_1, Z_1Z_3, Z_1Z_3\}$,
% which cannot be a multisubset of $T_-$ taking into account that
which means that
$XZ_3=Y_1Y_2$
% (from $Z_1Z_2=XY_2$).
and leads to the subcase (2a).
}
\end{proof}

\begin{pro}\label{l:2-2}
Assume that a $[2]$-trade $T = (T_+,T_-)$ of volume $6$ is represented as
$$
T =
(1-Y_1)(1-Y_2)(1-Y_3)-
(1-Z_1)(1-Z_2)(1-Z_3)
$$
where $Y_1$, $Y_2$, $Y_3$ are mutually disjoint nonempty sets, and likewise
$Z_1$, $Z_2$, $Z_3$ are mutually disjoint nonempty sets,
$Y_1$, $Y_2$, $Y_3$, $Z_1$, $Z_2$, $Z_3$
are mutually different nonempty sets,
and $Y_1Y_2=Z_1Z_2$.
Then every extension $T'$ of
$T $
has the same form, up to a shift.
\end{pro}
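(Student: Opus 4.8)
The plan is to adapt the argument of Proposition~\ref{l:3-3} to the present form. I would begin by expanding $T$ into blocks; the hypothesis $Y_1Y_2=Z_1Z_2$ makes the terms $+Y_1Y_2$ and $-Z_1Z_2$ cancel, so that
$$T_+=\{Y_1Y_3,\,Y_2Y_3,\,Z_1,\,Z_2,\,Z_3,\,Z_1Z_2Z_3\},\qquad T_-=\{Y_1,\,Y_2,\,Y_3,\,Y_1Y_2Y_3,\,Z_1Z_3,\,Z_2Z_3\}.$$
As in Proposition~\ref{l:3-3}, Lemma~\ref{l:sub} shows that an extension $T'$ by a new element $s$ corresponds to a splitting $T=S+Q$ into two $[1]$-subtrades with $T'_+=S_+\uplus x_sQ_+$ and $T'_-=S_-\uplus x_sQ_-$; replacing $T'$ by its $x_s$-shift if necessary, I may assume $\vol(Q)\le3$. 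By Lemma~\ref{l:min} a nonvoid $[1]$-trade has volume at least $2$, so only $\vol(Q)\in\{0,2,3\}$ occur, and $\vol(Q)=0$ is trivial.

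The core of the proof is to enumerate the $[1]$-subtrades $Q$ of volumes $2$ and $3$ and to re-express each resulting $T'$ in the required form after a shift. For $\vol(Q)=2$ I would check that the only subtrades are
$$P=(1-Y_1)(1-Y_2)-(1-Z_1)(1-Z_2),\qquad -Y_3(1-Y_1)(1-Y_2),\qquad Z_3(1-Z_1)(1-Z_2);$$
for example, with $Q=P$ one gets $T'=(1-Y_1)(1-Y_2)(x_s-Y_3)-(1-Z_1)(1-Z_2)(x_s-Z_3)$, and its $x_s$-shift is of the stated form with $Y_3,Z_3$ replaced by $x_sY_3,x_sZ_3$ (the other two subtrades are handled the same way). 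For $\vol(Q)=3$, Theorem~\ref{th:[1]3} forces $Q$, up to a shift, to consist of three mutually disjoint blocks in each leg, which severely restricts the admissible splittings; a representative is $Q=(1-Y_1)(1-Y_3)-(1-Z_1)(1-Z_3)$, for which $S=T-Q$ gives $x_sT'=(1-Y_1)(1-x_sY_2)(1-Y_3)-(1-Z_1)(1-x_sZ_2)(1-Z_3)$, which is again of the stated form under the pairing $Y_1(x_sY_2)=Z_1(x_sZ_2)=x_sW$, where $W=Y_1Y_2=Z_1Z_2$.

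The main obstacle is this enumeration, and in particular the volume-$3$ case: one must list every way to split the six blocks of each leg into two compatible volume-$3$ subtrades and then recognise each $T'$ inside the canonical block pattern $\{A_1A_3,A_2A_3,B_1,B_2,B_3,B_1B_2B_3\}$ versus $\{A_1,A_2,A_3,A_1A_2A_3,B_1B_3,B_2B_3\}$ with $A_1A_2=B_1B_2$. I expect the workload to collapse to two representatives (according to whether the block $Z_1Z_2Z_3$ or $Z_3$ is used) by exploiting the symmetries $Y_1\leftrightarrow Y_2$, $Z_1\leftrightarrow Z_2$ and the leg-swap $T\mapsto-T$, together with the factored group-ring representation, in which the new element $s$ is simply absorbed into one set of each distinguished pair. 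Finally, the degenerate coincidences among the $Y_i$ and $Z_j$ that the statement allows (for instance $Y_3\cap Z_3\ne\emptyset$ or a relation $Z_i=Y_jY_k$) would need to be checked separately not to break the matching, exactly as in the parenthetical remarks of Proposition~\ref{l:3-3}.
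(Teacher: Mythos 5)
Your skeleton is the same as the paper's: the block expansion of $T$ is correct, the reduction via Lemma~\ref{l:sub} to a $[1]$-subtrade $Q$ with $\vol(Q)\in\{0,2,3\}$ (after an $x_s$-shift) is exactly the paper's first step, the three volume-$2$ subtrades you list coincide with the paper's cases (a)--(c), and your volume-$3$ representative $(1-Y_1)(1-Y_3)-(1-Z_1)(1-Z_3)$ with the re-expression $x_sT'=(1-Y_1)(1-x_sY_2)(1-Y_3)-(1-Z_1)(1-x_sZ_2)(1-Z_3)$ is computed correctly (indeed more cleanly than the formula displayed in the paper's subcase (d)). But what you have written is a plan, not a proof: the entire mathematical content of this proposition is the \emph{completeness} of the enumeration of $[1]$-subtrades, and that is exactly the part you defer (``I would check'', ``I expect the workload to collapse'').

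Concretely, two things are missing. First, for $\vol(Q)=2$ you give no argument that your three subtrades are the only ones. The paper needs a structural claim here: partitioning the twelve blocks into the quadruples $\boldsymbol X=\{Z_1,Z_2,Y_1,Y_2\}$, $\boldsymbol Y=\{Y_2Y_3,Y_1Y_3,Y_3,Y_1Y_2Y_3\}$, $\boldsymbol Z=\{Z_3,Z_1Z_2Z_3,Z_2Z_3,Z_1Z_3\}$, it shows via Lemma~\ref{l:sub} (applied to elements of $Z_3\setminus Y_3$ and $Y_3\setminus Z_3$) that $Q_+$ and $Q_-$ meet each quadruple in equally many blocks, and then kills all mixed pairs with Lemma~\ref{l:[1]}; you have no substitute for this. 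Second, for $\vol(Q)=3$, Theorem~\ref{th:[1]3} describes $Q$ only up to an \emph{unknown} shift, and you never convert that into a usable constraint on which triples of blocks of $T_+$ and $T_-$ can occur; your symmetries cannot finish the job either. The paper first uses the volume-$2$ classification (plus nonexistence of volume-$1$ trades) to force one block of $Q_+$ from each quadruple, then, even after normalizing $YY'=Y_1$, $ZZ'=Z_1$ by the symmetries you mention, still has eight candidate triples, of which six must be shown \emph{not} to be $[1]$-trades by an explicit computation (subtracting from the valid subtrade (e) and comparing with (c)). Note also that both surviving subtrades, your representative and the one through $Z_1Z_2Z_3$, are each individually invariant under the leg-swap-plus-$Y_i\leftrightarrow Z_i$ symmetry, so symmetry genuinely cannot certify which candidates fail; some case-by-case elimination is unavoidable, and it is absent from your proposal.
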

\begin{proof}{
We have
$$T_+ = \{Z_1,Z_2,   Y_2Y_3,Y_1Y_3,   Z_3,Z_1Z_2Z_3\},\quad
T_- = \{Y_1,Y_2,   Y_3,Y_1Y_2Y_3,   Z_2Z_3,Z_1Z_3\}.$$
 Repeating the arguments of the previous proof, we conclude that we have to check
 all possibilities for a $[1]$-subtrade $Q=(Q_+,Q_-)$ of volume $2$ or $3$.

 Denote
 $$\boldsymbol X:=\{Z_1,Z_2,\underline{Y_1},\underline{Y_2}\},\ \ \boldsymbol Y:=\{Y_2Y_3,Y_1Y_3,\underline{Y_3},\underline{Y_1Y_2Y_3}\},\ \ \boldsymbol Z:=\{Z_3, Z_1Z_2Z_3, \underline{Z_2Z_3}, \underline{Z_1Z_3}\}$$
 (the underlined blocks are from $T_-$, the other are from $T_+$).
 We first note the following fact.

(*) \em The sets $Q_+$ and $Q_-$ have the same number of elements from each of $\boldsymbol X$, $\boldsymbol Y$, $\boldsymbol Z$. \em

 Indeed, since $Y_3$ and $Z_3$ are different, we have $Y_3\backslash Z_3 \neq \emptyset$ or $Z_3\backslash Y_3 \neq \emptyset$.
 Assume w.l.o.g. that $Z_3\backslash Y_3$ is not empty; i.e., it contains some element $x_i$.
 By Lemma~\ref{l:sub}, $Q_+\cap \boldsymbol Z$ and $Q_-\cap \boldsymbol Z$ are the legs of a $[0]$-trade;
 hence, the cardinalities of this intersection are equal.
 Next, consider an element $x_j$ from $Y_3$. If $x_j\not\in Z_3$, then, similar to the argument above, we obtain that
 $|Q_+ \cap \boldsymbol Y| = |Q_- \cap \boldsymbol Y|$.
 If $x_j\in Z_3$, then we have $|Q_+ \cap( \boldsymbol Y \cup \boldsymbol Z)| = |Q_- \cap (\boldsymbol Y \cup \boldsymbol Z)|$.
 In any case, the whole statement of (*) follows.

\noindent{\bf Case 1.} $\vol(Q)=2$.
 %(i) {Case  $|Q_+|=|Q_-|=2$}.

 Assume that
 $Q_+$ has one block from $\boldsymbol X$, say $X$,
 and one block from $\boldsymbol Y$, say $Y$.
 Then, from (*),
 $Q_+$ also has one block from $\boldsymbol X$,
 say $X'$, and one block from $\boldsymbol Y$, say $Y'$.
 We have $XX' = Z_iY_j$ and $YY'=Y_k$
 for some $i,j,k\in\{1,2\}$.
 In any case, $XX'YY'=Z_l$  for some $l\in\{1,2\}$,
 which contradicts Lemma~\ref{l:[1]}.
 So, $Q_+$ cannot have one block from $\boldsymbol X$ and one from $\boldsymbol Y$.
 Similarly, $Q_+$ cannot have one block from $\boldsymbol X$ and one from $\boldsymbol Z$,
 or one block from $\boldsymbol Y$ and one from $\boldsymbol Z$. The remaining
 possibilities satisfy the statement of the proposition:

 (a) {$Q_+ =  \{Z_1,Z_2\}$, $Q_- =  \{Y_1,Y_2\}$}; then the extension of $T$ is
 $$T' =x_s((1-Y_1)(1-Y_2)(1-x_sY_3) - (1-Z_1)(1-Z_2)(1-x_sZ_3)).$$

 (b) {$Q_+ =  \{Y_2Y_3,Y_1Y_3\}$, $Q_- =  \{Y_3,Y_1Y_2Y_3\}$}; then the extension of $T$ is
 $$T' =(1-Y_1)(1-Y_2)(1-x_sY_3) - (1-Z_1)(1-Z_2)(1-Z_3).$$

 (c) {$Q_+ =  \{Z_3,Z_1Z_2Z_3\}$, $Q_- =  \{Z_2Z_3,Z_1Z_3\}$}; then the extension of $T$ is
 $$T' =(1-Y_1)(1-Y_2)(1-Y_3) - (1-Z_1)(1-Z_2)(1-x_sZ_3).$$

% In all these three cases, the statement of the proposition is true.

\noindent{\bf Case 2.} $\vol(Q)=3$
%(ii) {Case  $|Q_+|=|Q_-|=3$}.

$Q_+$ cannot intersect
one of $\boldsymbol X$, $\boldsymbol Y$, $\boldsymbol Z$
in two blocks,
otherwise it contains a $[1]$-subtrade of volume $2$ ((a), (b), or (c))
and the difference would be a $[1]$-trade of volume $1$.
So, $Q_+=\{X,Y,Z\}$ and $Q_-=\{X',Y',Z'\}$
for some $X$, $Y$, $Z$, $X'$, $Y'$, $Z'$
from
$\boldsymbol X \cap T_+ $,
$\boldsymbol Y \cap T_+ $,
$\boldsymbol Z \cap T_+ $,
$\boldsymbol X \cap T_- $,
$\boldsymbol Y \cap T_- $,
$\boldsymbol Z \cap T_- $,
respectively.
We have $YY' = Y_i$ and $ZZ'=Z_j$, where $i,j\in\{1,2\}$.
Assume w.l.o.g. that $YY' = Y_1$ and $ZZ'=Z_1$.
It follows from Lemma~\ref{l:[1]} that $XX'=Y_1Z_1$.
With these assumptions, $X'$, $Y'$,
and $Z'$ are uniquely determined by $X$, $Y$, and $Z$.
It remains to consider the eight possibilities to choose $X$, $Y$, and $Z$
($X\in \{Z_1,Z_2\}$, $Y \in \{Y_1 Y_3, Y_2 Y_3\} $, $Z\in \{Z_3, Z_1Z_2Z_3\}$).
The following two possibilities  are in agree with the proposition statement:

(d) {$Q_+ =  \{Z_1, Y_1 Y_3, Z_3  \}$}, $Q_- =  \{Y_1, Y_3, Z_1Z_3  \}$; then the extension of $T$ is
$$T' =(1-x_sY_1)(1-Y_2)(1-Y_3) - (1-x_sZ_1)(1-Z_2)(1-Z_3).$$

(e) {$Q_+ =  \{Z_2, Y_2 Y_3, Z_1Z_2Z_3 \}$}, $Q_- =  \{Y_2, Y_1 Y_2 Y_3, Z_2Z_3 \}$; then the extension of $T$ is
$$T' =x_s((1-x_sY_1)(1-Y_2)(1-Y_3) - (1-x_sZ_1)(1-Z_2)(1-Z_3)).$$

Consider the six other possibilities to choose $X$, $Y$, $Z$ from
$\boldsymbol X \cap T_+ $,
$\boldsymbol Y \cap T_+ $,
$\boldsymbol Z \cap T_+ $.
For example, let {$Q_+ =  \{Z_2, Y_2 Y_3, Z_3  \}$} (the other five cases are similar);
so, $Q_- =  \{Y_2, Y_1 Y_2 Y_3, Z_1Z_3  \}$.
Subtracting $(Q_-,Q_+)$ from the $[1]$-trade (e) above, we get
$$(\{Z_1Z_2Z_3,Z_1Z_3 \},\{Z_2Z_3,Z_3 \}),$$ which is not a $[1]$-trade (compare with (c) above).
Hence, $(Q_-,Q_+)$ is not a $[1]$-trade either.

Therefore, under the assumption that $YY' = Y_1$ and $ZZ'=Z_1$, in only two subcases, (d) and (e), we have trades.
The other cases ($YY' = Y_1$ and $ZZ'=Z_2$, $YY' = Y_2$ and $ZZ'=Z_1$, $YY' = Y_2$ and $ZZ'=Z_2$) are similar.}
\end{proof}

\begin{pro}\label{l:1-3}
Let a $[2]$-trade
$T = (T_+,T_-)$
of volume $6$ be represented as
$$
T =
(1-Y_1)(1-Y_2)(1-Y_3)-
(1-Z_1)(1-Z_2)(1-Y_1Y_2Y_3)
$$
where $Y_1$, $Y_2$, $Y_3$, $Z_1$, $Z_2$
are mutually disjoint nonempty sets.
Then, every extension $T'$ of
$T $
has the same form, up to a shift.
\end{pro}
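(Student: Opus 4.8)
The plan is to follow the template of the proofs of Propositions~\ref{l:3-3} and~\ref{l:2-2}. First I would expand $T$ in the group ring: writing $W:=Y_1Y_2Y_3$, the blocks $\emptyset$ and $W$ cancel, leaving
\[
T_+=\{Y_1Y_2,\,Y_1Y_3,\,Y_2Y_3,\,Z_1,\,Z_2,\,Z_1Z_2W\},\qquad
T_-=\{Y_1,\,Y_2,\,Y_3,\,Z_1Z_2,\,Z_1W,\,Z_2W\}.
\]
By Lemma~\ref{l:sub}, an extension has the form $T'=S+x_sQ$ with $T=S+Q$, where $Q=(Q_+,Q_-)$ is a $[1]$-subtrade of $T$ (the blocks receiving the new element $x_s$) and $S=T-Q$. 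As in the earlier proofs I may assume $\vol(Q)\le3$ (otherwise pass to the $x_s$-shift, which replaces $Q$ by its complement $S$); the value $\vol(Q)=1$ is excluded by Lemma~\ref{l:min}, and $\vol(Q)=0$ is trivial. So everything reduces to classifying the $[1]$-subtrades of $T$ of volumes $2$ and $3$ and inspecting the shape of the corresponding $T'$.

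The key simplification is that every block of $T$ is a union of the five pairwise disjoint nonempty ``atoms'' $Y_1,Y_2,Y_3,Z_1,Z_2$ (with $W=Y_1Y_2Y_3$). Hence a sub-collection $(Q_+,Q_-)$ is a $[1]$-subtrade exactly when $Q_+$ and $Q_-$ contain each atom the same total number of times, i.e.\ when the atom-occurrence vectors of $Q_+$ and $Q_-$ agree in $\mathbb Z^5$. This turns the search into a short finite enumeration, organized by atom occurrences as in fact~(*) of the proof of Proposition~\ref{l:2-2}; the symmetry group permuting $\{Y_1,Y_2,Y_3\}$ and swapping $Z_1\leftrightarrow Z_2$, together with complementation $Q\mapsto T-Q$, then cuts the cases to be checked down to essentially one of each volume.

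The enumeration should yield the following. For $\vol(Q)=2$ the only subtrades are $Q=Z_i(1-Z_j)(1-W)$ with $\{i,j\}=\{1,2\}$, and a direct computation of $T'=T-(1-x_s)Q$ gives
\[
T'=(1-Y_1)(1-Y_2)(1-Y_3)-(1-x_sZ_i)(1-Z_j)(1-W),
\]
which is of the required form with $Z_i$ enlarged to $Z_i\cup\{s\}$ (no shift needed). For $\vol(Q)=3$ the subtrades are, up to the symmetry and complementation above, $Q=(1-Y_i)(1-Y_j)-(1-Z_1)(1-Z_2)$; substituting into $T'=T-(1-x_s)Q$ and collecting the common factors yields
\[
T'=(1-Y_i)(1-Y_j)(x_s-Y_k)-(1-Z_1)(1-Z_2)(x_s-W),
\]
whose $x_s$-shift is $(1-Y_i)(1-Y_j)(1-x_sY_k)-(1-Z_1)(1-Z_2)(1-x_sW)$, once more the required form with $Y_k$ enlarged to $Y_k\cup\{s\}$ (note $x_sW=Y_iY_j(x_sY_k)$). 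A complementary subtrade $T-Q$ produces the $x_s$-shift of the trade just obtained, hence is of the same form automatically.

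The main obstacle is not the algebra — once a subtrade is written in product form, the identities above make the shape of $T'$ transparent — but the completeness of the classification: I must be certain the lists above exhaust all volume-$2$ and volume-$3$ $[1]$-subtrades, with no stray subtrade (for instance one built from $Y_iY_j,Y_iY_k$) slipping through. Keeping the bookkeeping honest via atom-occurrence vectors and the symmetry of $T$ is what makes this safe, and it is the step most analogous to, and about as delicate as, the case analysis in Proposition~\ref{l:2-2}.
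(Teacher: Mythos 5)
Your strategy coincides with the paper's own proof: expand $T$ into its twelve blocks (your lists are exactly the paper's), write an extension as $T'=T-(1-x_s)Q$ for a $[1]$-subtrade $Q$, reduce to $\vol(Q)\in\{2,3\}$ via the $x_s$-shift and Lemma~\ref{l:min}, classify the subtrades, and verify the form of each resulting $T'$. Your two claimed families are precisely what the paper finds (its Case~1 for volume $2$; its subcases (4c) and (4d) for volume $3$, which are complements of each other), your product-form computations of $T'$ are correct, and so is your observation that a complementary subtrade only produces an $x_s$-shift of the trade already obtained.

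The genuine shortfall is the one you flag yourself: ``the enumeration should yield'' is an assertion, and proving that \emph{no other} volume-$2$ or volume-$3$ subtrade exists is the entire body of the paper's proof --- it introduces the block families $\boldsymbol Y$, $\boldsymbol Z$, $\boldsymbol Z'$, proves a balance claim, and runs Cases 1--4, in which Cases 2, 3, 4a, 4b all end in contradictions. Your atom-occurrence framework does close this gap, and quite cleanly, once one correction is made: equality of atom vectors alone does not characterize $[1]$-subtrades (e.g.\ $(\{Z_1,Z_2\},\{Z_1Z_2\})$ has equal atom vectors); you must also impose $|Q_+|=|Q_-|$, which you do implicitly by fixing the volume. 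Concretely: recording each block by its set of atoms, the blocks of $T_+$ have atom-sizes $2,2,2,1,1,5$ and those of $T_-$ have sizes $1,1,1,2,4,4$; equal atom vectors force equal totals, so for $\vol(Q)=2$ the only common achievable totals are $2,3,6$, and only total $6$ admits matching vectors, giving your $Z_i(1-Z_j)(1-W)$; for $\vol(Q)=3$ the common totals are $4,6,7,9$, and only $4$ and $9$ admit matches, giving your $(1-Y_i)(1-Y_j)-(1-Z_1)(1-Z_2)$ and its complement. With that short verification written out, your argument is complete and is essentially the paper's proof with a tidier bookkeeping of the case analysis.
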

\begin{proof}{
 We have
$$T_+ = \{Y_1Y_2,Y_1Y_3,Y_2Y_3, Z_1, Z_2, Y_1 Y_2 Y_3 Z_1Z_2\},\quad
T_- = \{Y_1,Y_2,Y_3, Z_1Z_2, Z_1Y_1Y_2Y_3, Z_2Y_1Y_2Y_3 \}.$$
 Repeating the arguments of the  proofs of Propositions \ref{l:3-3} and \ref{l:2-2}, we need to check
 all possibilities for a $[1]$-subtrade $Q=(Q_+,Q_-)$ of volume $2$ or $3$.

 Denote
 \begin{align*}
\boldsymbol Y&:=\{\underline{Y_1},\underline{Y_2},\underline{Y_3},Y_1 Y_2,Y_1 Y_3, Y_2 Y_3\},\\
\boldsymbol Z&:=\{Z_1,\underline{Z_1Z_2},\underline{Z_1Y_1 Y_2 Y_3}, Z_1Z_2Y_1 Y_2 Y_3\},\\
\boldsymbol Z'&:=\{Z_2,\underline{Z_1Z_2},\underline{Z_2Y_1 Y_2 Y_3}, Z_1Z_2Y_1 Y_2 Y_3\}.
 \end{align*}
 Similarly to the claim (*) in the proof of Proposition~\ref{l:2-2}, we have

 (*) \em $Q_+$ and $Q_-$ have the same number of elements from each of $\boldsymbol Z$ and $\boldsymbol Z'$. \em

Now, assume that $Q$ is a $[1]$-subtrade of volume $2$ or $3$. Consider the following four cases, which exhaust all possibilities.

\noindent{\bf Case 1.} $|Q_+ \cap \boldsymbol Z| = 2$ or $|Q_+ \cap \boldsymbol Z'| = 2$.

Without loss of generality assume $|Q_+ \cap \boldsymbol Z| = 2$.
Necessarily we have $|Q_- \cap \boldsymbol Z| = 2$, and so
 $Q_+ \supseteq \{Z_1,Z_1Z_2Y_1 Y_2 Y_3\}$, $Q_- \supseteq \{Z_1Z_2,Z_1Y_1 Y_2 Y_3\}$.
 We see that $(\{Z_1,Z_1Z_2Y_1 Y_2 Y_3\},\{Z_1Z_2,Z_1Y_1 Y_2 Y_3\})$ is a $[1]$-trade,
 and we cannot add one more element to each leg keeping the $[1]$-trade property.
 So, $\vol(Q)=2$ and
 $$
T' =
(1-Y_1)(1-Y_2)(1-Y_3)-
(1-x_sZ_1)(1-Z_2)(1-Y_1Y_2Y_3).
$$

\noindent{\bf Case 2.} $|Q_+ \cap \boldsymbol Z| =|Q_+ \cap \boldsymbol Z'| = 0$.

In this case we have  $Q_+ \subseteq \{Y_1Y_2,Y_1Y_3,Y_2Y_3\}$ and $Q_- \subseteq \{Y_1,Y_2,Y_3\}$.
The leg $Q_+$ has two intersecting blocks, but the blocks of $Q_-$ are mutually disjoint;
we have an obvious contradiction with the definition of a $[1]$-trade.

\noindent{\bf Case 3.}   $|Q_+ \cap \boldsymbol Z|= 1$ and $|Q_+ \cap \boldsymbol Z'| = 0$ (similarly, $|Q_+ \cap \boldsymbol Z|= 0$ and $|Q_+ \cap \boldsymbol Z'| = 1$).

From (*) we have that $Z_1\in Q_+$, $Z_1Y_1Y_2Y_3 \in Q_-$, and every other block of $Q_+$ or $Q_-$ belongs to $\boldsymbol Y$.
Since all elements of $Y_1Y_2Y_3$ occur in $Q_-$, at least two of $Y_1Y_2$, $Y_1Y_3$, $Y_2Y_3$ belong to $Q_+$ (in particular, the volume of $Q$ is $3$, not $2$).
W.l.o.g. assume $Q+=\{Z_1,Y_1Y_2,Y_1Y_3\}$. We see that the elements of $Y_1$ occurs twice in $Q_+$; hence, $Q_-$ contains $Y_1$.
By Lemma~\ref{l:[1]}, the third block in $Q_-$ is $Z_1\oplus Y_1Y_2\oplus Y_1Y_3 \oplus Z_1Y_1Y_2Y_3  \oplus Y_1$, i.e., $\emptyset$.
Since $\emptyset\not\in T_-$, hence we reach at a contradiction.

\noindent{\bf Case 4.}  $|Q_+ \cap \boldsymbol Z| =|Q_+ \cap \boldsymbol Z'| = 1$.

 Consider the following subcases.

(4a) {$Z_1Z_2Y_1Y_2Y_3 \in Q_+$, $Z_1Z_2 \in Q_-$, the other blocks are from $\boldsymbol Y$}. Since all elements of $Y_1Y_2Y_3$ occur in $Q_+$,
$Q_-$ must contain each of $Y_1$, $Y_2$, $Y_3$, which is impossible since $|Q_-|\le 3$.

(4b) {$Z_1,Z_2 \in Q_+$, $Z_1Y_1Y_2Y_3,Z_2Y_1Y_2Y_3 \in Q_-$, the other blocks are from $\boldsymbol Y$}.
Since all elements of $Y_1Y_2Y_3$ occur in $Q_-$ twice,
$Q_+$ must contain each of $Y_1Y_2$, $Y_1Y_3$, $Y_2Y_3$, which is impossible as $|Q_+|\le 3$.

(4c) {$Z_1Z_2Y_1Y_2Y_3 \in Q_+$, $Z_1Y_1Y_2Y_3,Z_2Y_1Y_2Y_3 \in Q_-$, the other blocks are from $\boldsymbol Y$}.
Since $Z_1Z_2Y_1Y_2Y_3 \oplus Z_1Y_1Y_2Y_3 \oplus Z_2Y_1Y_2Y_3 = Y_1Y_2Y_3 \not\in T_+$,
 from Lemma~\ref{l:[1]} we observe that the $[1]$-trade $(Q_+,Q_-)$ cannot have volume $2$.
So, $Q_+$ has two elements from $\boldsymbol Y$, say $Y_iY_j$ and $Y_iY_k$.
By Lemma~\ref{l:[1]} we find $Y_i\in Q_-$, and so
$$
T' =
(1-x_sY_i)(1-Y_j)(1-Y_k)-
(1-Z_1)(1-Z_2)(1-x_sY_1Y_2Y_3).
$$

(4d) {$Z_1,Z_2 \in Q_+$, $Z_1Z_2 \in Q_-$, the other blocks are from $\boldsymbol Y$}.
Similarly to the  subcase (4c), we have
$$
T' =
x_s(1-x_sY_i)(1-Y_j)(1-Y_k)-
x_s(1-Z_1)(1-Z_2)(1-x_sY_1Y_2Y_3).
$$
}\end{proof}

\begin{pro}\label{l:1-1}
Assume that
$$T=(\{Y_1,Y_2,Y_3,XZ_1,XZ_2,XZ_3\},
\{Z_1,Z_2,Z_3,XY_1,XY_2,XY_3\}),$$
where
$X$, $Y_1$, $Y_2$, $Y_3$
are mutually disjoint,
$X$, $Z_1$, $Z_2$, $Z_3$
are mutually disjoint,
$Y_1Y_2Y_3=Z_1Z_2Z_3$,
 $Y_i\ne Z_j$
for every $i,j\in\{1,2,3\}$ and $X\ne\emptyset$
Then, every extension of $T$
has the same form,
up to a shift.
\end{pro}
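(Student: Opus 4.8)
The plan is to reproduce the scheme of Propositions~\ref{l:3-3}, \ref{l:2-2} and~\ref{l:1-3}. Writing the legs out as
$$T_+ = \{Y_1,Y_2,Y_3,XZ_1,XZ_2,XZ_3\},\qquad T_- = \{Z_1,Z_2,Z_3,XY_1,XY_2,XY_3\},$$
an extension $T'$ of $T$ by a fresh element $s$ decomposes, by Lemma~\ref{l:sub}, as $T'=S+x_sQ$, where the blocks of $T$ are split into two $[1]$-subtrades $S=(S_+,S_-)$ and $Q=(Q_+,Q_-)$ with $T=S\uplus Q$ and $x_s$ appended to the blocks of $Q$. Taking the $x_s$-shift if needed, I may assume $\vol(Q)\le 3$; volume $0$ is trivial and volume $1$ is impossible by Lemma~\ref{l:min}. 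So the task reduces to listing all $[1]$-subtrades $Q$ of volumes $2$ and $3$ and checking that each resulting $T'$ has the prescribed form up to a shift.

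The organizing device is the split of the blocks of $T$ into those meeting $X$, namely $\boldsymbol U:=\{XZ_1,XZ_2,XZ_3,XY_1,XY_2,XY_3\}$, and those disjoint from $X$, namely $Y_1,Y_2,Y_3,Z_1,Z_2,Z_3$. Since $X\ne\emptyset$, fixing an element of $X$ and using that $Q$ is a $[1]$-trade gives $|Q_+\cap\boldsymbol U|=|Q_-\cap\boldsymbol U|=:a$ (the number of $XZ$-blocks in $Q_+$ equals the number of $XY$-blocks in $Q_-$). The heart of the argument is then a multiplicity comparison: equating the element-multiplicity vectors of $Q_+$ and $Q_-$ and cancelling the contribution of $X$, the $[1]$-trade condition becomes an identity between a $\pm$-combination of the $\mathbf 1_{Y_p}$ and a $\pm$-combination of the $\mathbf 1_{Z_q}$ on the common support $Y_1Y_2Y_3=Z_1Z_2Z_3$. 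Here the mere ``$\bigoplus=\emptyset$'' form of Lemma~\ref{l:[1]} is \emph{not} enough; one really needs the full multiplicity condition, and this is the delicate step. Since the $Y$- and $Z$-partitions of the support are compared block by block, the hypotheses $Y_1Y_2Y_3=Z_1Z_2Z_3$ and, crucially, $Y_i\ne Z_j$ rule out every ``mixed'' pattern: any non-aligned index choice forces some $Y_i=Z_j$ (or an empty $Y_i$, $Z_j$), a contradiction. What survives is, for volume $2$, only the aligned subtrades $Q=(\{Y_i,XZ_k\},\{Z_k,XY_i\})$, and, for volume $3$, only $a=0$, giving $Q=(\{Y_1,Y_2,Y_3\},\{Z_1,Z_2,Z_3\})$, and $a=3$, giving $Q=(\{XZ_1,XZ_2,XZ_3\},\{XY_1,XY_2,XY_3\})$. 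The built-in symmetry of $T$ --- its $X$-shift sends $(T_+,T_-)$ to $(T_-,T_+)$ while interchanging the $Y$- and $Z$-roles, and thus pairs the cases $a$ and $\vol(Q)-a$ --- halves this casework.

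It then remains to confirm that each surviving $Q$ yields an extension of the required form, which is routine bookkeeping. For the volume-$2$ subtrade the extension is of the prescribed form with $X'=X$, $Y_i'=x_sY_i$, $Z_k'=x_sZ_k$ and the other sets unchanged; for $a=3$ it already has the form with $X'=x_sX$; and for $a=0$ the same holds after an $x_s$-shift (which merely swaps the roles of $S$ and $Q$). In every case, because $s$ is new, the disjointness of $X',Y_1',Y_2',Y_3'$ and of $X',Z_1',Z_2',Z_3'$, the identity $Y_1'Y_2'Y_3'=Z_1'Z_2'Z_3'$, the inequalities $Y_i'\ne Z_j'$, and $X'\ne\emptyset$ all follow at once. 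Thus every extension of $T$ is, up to a shift, again of the form in the statement.
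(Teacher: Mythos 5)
Your argument is correct, but it is genuinely different from the paper's. The paper does not classify subtrades of this particular volume-$6$ trade at all: it notes that $T=(1-X)\bar\sigma$, where $\bar\sigma=(\{Y_1,Y_2,Y_3\},\{Z_1,Z_2,Z_3\})$ is a small $[1]$-trade, and derives Proposition~\ref{l:1-1} as a special case of the general Proposition~\ref{l:1-1gen}, valid for every small $[t-1]$-trade $\bar\sigma$ and nonempty $X$ disjoint from its blocks. There the extension is written as $T'=x_s\bar\kappa+(T-\bar\kappa)$ with $\bar\kappa$ a small $[t-1]$-subtrade, $\bar\kappa$ is projected along $X$, and a counting argument resting on the minimum-volume bound (Lemma~\ref{l:odd}) shows that either the projection $\bar\kappa^p$ is void or every block of $\bar\kappa^p+\bar\sigma$ has even multiplicity; since a small trade has no proper subtrades, $(\bar\kappa^p+\bar\sigma)/2$ must be $0$ or $\bar\sigma$, giving the three forms $(1-X)\bar\sigma'$, $(1-x_sX)\bar\sigma$, $(x_s-X)\bar\sigma$. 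That route is uniform in $t$, avoids casework, and isolates a reusable structural fact, though to conclude it still needs Theorem~\ref{th:[1]3} to see that the extension $\bar\sigma'$ of the inner volume-$3$ trade again has the standard form. Your route is instead the elementary one of Propositions~\ref{l:3-3}--\ref{l:1-3}: decompose $T'=S+x_sQ$, reduce to $\vol(Q)\in\{2,3\}$ by a shift, and classify the $[1]$-subtrades $Q$. Your organizing observations are sound: since $X\ne\emptyset$, each leg of $Q$ contains the same number $a$ of $X$-blocks, and the $X$-shift composed with swapping legs is a symmetry of $T$ pairing the counts $a$ and $\vol(Q)-a$. The list of surviving subtrades and the verification that each yields an extension of the required form are correct.

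The one weakness is that the heart of your proof --- that every non-aligned pattern is impossible --- is asserted rather than carried out. The assertion is true, and each case is short; e.g., for $Q_+=\{Y_i,XZ_k\}$, $Q_-=\{Z_l,XY_j\}$ with $i\ne j$, the multiplicity identity $\mathbf{1}_{Y_i}+\mathbf{1}_{Z_k}=\mathbf{1}_{Z_l}+\mathbf{1}_{Y_j}$, combined with the disjointness within the $Y$'s and within the $Z$'s, forces $Y_i\subseteq Z_l$ and $Z_l\subseteq Y_i$, hence $Y_i=Z_l$, a contradiction. But a complete write-up must run through the remaining cases (after your symmetry reduction: $a\in\{0,1\}$ for volume $2$ and $a\in\{0,1\}$ for volume $3$). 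One precision is also needed: an empty $Y_i$ or $Z_j$ by itself does not contradict the hypotheses of the proposition; the degenerate subcases are killed because they force some $Y_i$ and some $Z_j$ to be empty simultaneously, which again gives the forbidden equality $Y_i=Z_j$.
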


Proposition~\ref{l:1-1} is a partial case of the following more general fact.

\begin{pro}\label{l:1-1gen}
Assume that
$$T=(1-X)\bar\sigma$$
where $\bar\sigma$ is a $[t-1]$-trade of volume less than $2^t$ (i.e., small)
and $X$ is a nonempty set, disjoint from the blocks of $\bar\sigma$
(so, $T$ is a small $[t]$-trade). % of volume $\frac 32 2^{t}$
Let $T'$ be an extension of $T$.
Then
\begin{equation}\label{eq:dwq}
T'=(1-x_sX)\bar\sigma, \qquad T'=(x_s-X)\bar\sigma,
\end{equation}
or
\begin{equation}\label{eq:xvj}
T'=(1-X)\bar\sigma',
\end{equation}
where $\bar\sigma'$ is an extension of $\bar\sigma$.
\end{pro}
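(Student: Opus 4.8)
The plan is to reduce the statement to a classification of the $[t-1]$-subtrades of $T$, and then to settle that classification with a short counting argument resting on the indecomposability of small trades.

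First I would use the extension hypothesis to write $T'=P+x_sP'$, where $s$ is the new element, $s$ occurs in neither $P$ nor $P'$, and the $s$-projection of $T'$ is $T$, i.e.\ $T=P+P'$. By Lemma~\ref{l:sub} both $P$ and $P'$ are $[t-1]$-trades, and since $T'$ is an \emph{extension} there is no cancellation in $P+P'$; hence $P'$ is a $[t-1]$-subtrade of $T$ (and $P=T-P'$ is the complementary one). Because $X\ne\emptyset$, every block of $T=\bar\sigma-X\bar\sigma$ either contains all of $X$ or is disjoint from $X$; splitting $P'$ accordingly yields $P'=\sigma_1-X\sigma_2$, where $\sigma_1$ and $\sigma_2$ are subtrades of $\bar\sigma$ (the sign bookkeeping is immediate since the blocks $XB$ of $T$ carry the sign opposite to $B$). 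The three target forms correspond exactly to $(\sigma_1,\sigma_2)=(\bar\sigma,\mathbf 0)$ and $(\mathbf 0,\bar\sigma)$ (the two cases of \eqref{eq:dwq}) and to $\sigma_1=\sigma_2$ (then $T'=(1-X)\bar\sigma'$ with $\bar\sigma'=(\bar\sigma-\sigma_1)+x_s\sigma_1$, which is \eqref{eq:xvj}).

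Next I would read off what ``$P'$ is a $[t-1]$-trade'' means. Writing the defining equations $r^{P'}_S=0$ for $|S|\le t-1$ and separating $S\cap X=\emptyset$ from $S\cap X\ne\emptyset$, the equations with $S\cap X\ne\emptyset$ force $\sigma_1,\sigma_2$ to be $[t-2]$-trades (the converse situation to Lemma~\ref{l:dbl}, which is what makes $\bar\sigma'$ above a genuine extension of $\bar\sigma$), while the equations with $S\cap X=\emptyset$ say precisely that $\delta:=\sigma_1-\sigma_2$ is a $[t-1]$-trade. Thus everything reduces to the following claim: \emph{if $\sigma_1,\sigma_2$ are subtrades of the small trade $\bar\sigma$ and $\sigma_1-\sigma_2$ is a $[t-1]$-trade, then $\sigma_1=\sigma_2$ or $\{\sigma_1,\sigma_2\}=\{\bar\sigma,\mathbf 0\}$.} To prove it, assume $\delta\ne\mathbf 0$. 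Let $U$ be the block set of $\bar\sigma$ and $D$ that of $\delta$; since $\sigma_1,\sigma_2$ are subtrades of $\bar\sigma$, we have $D\subseteq U$, and $U\setminus D$ is exactly the set of blocks on which $\sigma_1$ and $\sigma_2$ agree. Both $U$ and $D$ are $[t-1]$-unitrades, hence so is $U\setminus D$; by Lemma~\ref{l:odd} every nonempty $[t-1]$-unitrade has at least $2^{t}$ blocks, while $|U|<2^{t+1}$ because $\bar\sigma$ is small. Therefore $D$ and $U\setminus D$ cannot both be nonempty, and as $D\ne\emptyset$ we get $U\setminus D=\emptyset$, i.e.\ $\bar\sigma=\sigma_1+\sigma_2$ on disjoint block sets. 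Finally $2\sigma_1=(\sigma_1+\sigma_2)+(\sigma_1-\sigma_2)=\bar\sigma+\delta$ is a $[t-1]$-trade, so $\sigma_1$ is itself a $[t-1]$-subtrade of $\bar\sigma$; were both $\sigma_1$ and $\bar\sigma-\sigma_1$ nonempty they would be $[t-1]$-trades of volume at least $2^{t-1}$ each (Lemma~\ref{l:min}), forcing $\vol(\bar\sigma)\ge2^{t}$, a contradiction. Hence $\sigma_1\in\{\mathbf 0,\bar\sigma\}$, giving the two cases of \eqref{eq:dwq}.

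The hard part is the middle step: excluding the ``mixed'' configurations in which $\sigma_1$ and $\sigma_2$ share some blocks but differ on others, so that $D\ne\emptyset$ and $U\setminus D\ne\emptyset$ simultaneously. A bare volume count does not rule these out, and what makes the argument go through is the observation that such a configuration would embed two \emph{disjoint} nonempty $[t-1]$-unitrades, $D$ and $U\setminus D$, inside $\bar\sigma$, each necessarily of size at least $2^{t}$ by Lemma~\ref{l:odd}, which is incompatible with $\vol(\bar\sigma)<2^{t}$. I have described the argument for simple trades; for trades with repeated blocks the same reasoning applies after passing to the odd-multiplicity parts, and the routine check that each of the three displayed trades is indeed an extension of $T$ follows at once from Lemmas~\ref{l:sub}, \ref{l:proj}, and~\ref{l:dbl}.
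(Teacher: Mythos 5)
Your overall route coincides with the paper's: writing $T'=P+x_sP'$, splitting the subtrade $P'=\sigma_1-X\sigma_2$, and passing to $\delta=\sigma_1-\sigma_2$ is exactly the paper's device of projecting the $x_s$-part $\bar\kappa$ of $T'$ along $X$ (your $\delta$ is its $\bar\kappa^p$); both arguments then finish with Lemma~\ref{l:odd} plus the indecomposability of small trades. For \emph{simple} $\bar\sigma$ your proof is correct. The genuine gap is the non-simple case, which the proposition must cover (small trades can have repeated blocks, e.g.\ the $[1]$-trade of volume $3$ in Example~\ref{ex:1}, and Theorem~\ref{t:[2]} applies this proposition to precisely such trades), and which you dismiss with ``the same reasoning applies after passing to the odd-multiplicity parts.'' It does not, for two reasons. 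First, the odd-multiplicity support $D_{\mathrm{odd}}$ of $\delta$ need not be contained in the odd-multiplicity support $U_{\mathrm{odd}}$ of $\bar\sigma$: a block of multiplicity $2$ in $\bar\sigma$ can receive coefficient $1$ in $\sigma_1$ and $0$ in $\sigma_2$. So your key identity $|D|+|U\setminus D|=|U|<2^{t+1}$ has no analogue for odd parts; one must instead bound $|U_{\mathrm{odd}}\oplus D_{\mathrm{odd}}|$ by counting multiplicities, which is exactly the paper's computation $a-a'+b'\le(a+2b)-(a'+b')<2^t$, using $b'\le b$ and the fact that the \emph{total multiplicity} $a+2b$ of $\bar\sigma$ is below $2^{t+1}$.

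Second, and more seriously, your concluding step collapses for multisets: from ``every block of $\bar\sigma+\delta$ has even multiplicity'' one cannot deduce that $\bar\sigma=\sigma_1+\sigma_2$ with disjoint supports --- a block of multiplicity $2$ (or $4$, \dots) in $\bar\sigma$ may be shared by $\sigma_1$ and $\sigma_2$ with coefficient $1$ each, invisibly to all odd-part bookkeeping, and then $\sigma_1+\sigma_2$ need not even equal $\bar\sigma$. Without $\sigma_1+\sigma_2=\bar\sigma$, the identity $2\sigma_1=(\sigma_1+\sigma_2)+\delta$ produces no trade, so indecomposability cannot be invoked. The repair is the paper's claim (**): using the domination $|\delta(B)|\le|\bar\sigma(B)|$ and sign-consistency, both $(\bar\sigma+\delta)/2$ and $(\bar\sigma-\delta)/2$ are subtrades of $\bar\sigma$ summing to $\bar\sigma$; indecomposability of the small trade $\bar\sigma$ (via Lemma~\ref{l:min}) then forces $\delta=\pm\bar\sigma$, and domination forces $(\sigma_1,\sigma_2)=(\bar\sigma,\mathbf{0})$ or $(\mathbf{0},\bar\sigma)$. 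With this multiplicity-aware argument substituted for your last two steps, your proof becomes complete and is then essentially identical to the paper's.
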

\begin{proof}{
We have $T'=x_s\bar\kappa+(T-\bar\kappa)$, where $\bar \kappa$ is a $[t-1]$-subtrade of $T$.
W.l.o.g., we can assume that $\bar\kappa$ is small.
Let $\bar\kappa^p$ be the projection of $\bar\kappa$ in $X$.
Then $\bar\kappa^p$ is a small $[t-1]$-trade, whose blocks are blocks of $\bar\sigma$.
Let us prove the following claim:

(*) \emph{If $\bar\kappa^p$ is not void, then all blocks of the $[t-1]$-trade $\bar\kappa^p+\bar\sigma$ have even multiplicity.}

Denote by $a$ and $b$ the number of different blocks of $\bar\sigma$ of odd and even multiplicity, respectively.
The volume of $\bar\sigma$ is at least $(a+2b)/2$; since $\bar\sigma$ is a small $[t-1]$-trade, we have
\begin{equation}\label{eq:a2b}
(a+2b)/2 < 2^t.
\end{equation}
Denote by $a'$ and $b'$ the number of blocks of $\bar\kappa^p$ of odd multiplicity whose multiplicity in $\bar\sigma$ is odd and even, respectively.
So, the number of blocks of odd multiplicity in $\bar\kappa^p+\bar\sigma$ is $a-a'+b'$.

Next, since $\bar\kappa^p$ is a small non-void $[t-1]$-trade, by Lemma~\ref{l:odd} we have
\begin{equation}\label{eq:a2t}
a'+b' \ge 2^t
\end{equation}

Now, using (\ref{eq:a2b}), (\ref{eq:a2t}),
and the trivial fact that $b'\le b$, for the number $a-a'+b'$ of odd-multiplicity blocks of $\bar\kappa^p+\bar\sigma$ we have
$$ a-a'+b' = a+2b' -a'-b' \le  (a+2b) -(a'+b') < 2\cdot 2^t - 2^t=2^t.$$
By Lemma~\ref{l:odd}, this number is $0$. Hence (*) follows.

If $\bar\kappa^p$ is void, we have (\ref{eq:xvj}). By (*), it remains to consider the case
when all blocks of the $[t-1]$-trade $\bar\kappa^p+\bar\sigma$ have even multiplicity.

(**) \emph{$(\bar\kappa^p+\bar\sigma)/2$ is a $[t-1]$-subtrade of $\bar\sigma$.} Equivalently,
every block of $(\bar\kappa^p+\bar\sigma)/2$ has the same sign in $(\bar\kappa^p+\bar\sigma)/2$ as in $\bar\sigma$
and at most the same multiplicity.
Indeed, by the definition of $\bar\kappa^p$,
the coefficient $\alpha$ at each of its blocks
satisfies $|\alpha|\le|\beta|$. It follows that $0\le \left|\frac{\alpha+\beta}2 \right| \le |\beta|$ and  $\frac{\alpha+\beta}2$ and $\beta$ are of the  same sign. So (**) follows.

Since $\bar\sigma$ is a small $[t-1]$-trade, it does not have proper subtrades, and $(\bar\kappa^p+\bar\sigma)/2$ is either zero or $\bar\sigma$.
In the first case, $\bar\kappa^p=-\bar\sigma$, and  $\bar\kappa=-X\bar\sigma$. In the second case, $\bar\kappa^p=\bar\kappa=\bar\sigma$.
Therefore, in every case, we obtain that $T'$ has on the forms given  in (\ref{eq:dwq}).}
\end{proof}

\begin{thm}\label{t:[2]}
 Every $[2]$-trade of volume $5$ or $6$ have one of the forms described in Propositions~$\ref{l:3-3}$--$\ref{l:1-1}$.
\end{thm}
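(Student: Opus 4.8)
The plan is to argue by induction on the foundation size, with Propositions~\ref{l:3-3}--\ref{l:1-1} --- each asserting that its form is closed under extension --- serving as the inductive step, and the \emph{irreducible} trades (those for which every projection strictly lowers the volume) as the base case. Since a shift permutes the blocks bijectively while preserving multiplicities, signs, volume, and the validity of each form, Lemma~\ref{l:reduced} lets me assume throughout that $T$ is reduced, so that $r_i\le\vol(T)/2$ for all $i\in\found(T)$; in particular every foundation element is then essential (an element lying in all blocks would have $r_i=\vol(T)$, violating reducedness).

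First I would dispose of volume $5$, which I expect to be vacuous. If $T$ is a reduced $[2]$-trade of volume $5$, then $r_i\le2$ for all $i$, while $r_i\ge2$ for every essential $i$ because $T_i$ is a nonvoid $[1]$-trade (Lemmas~\ref{l:sub} and~\ref{l:min}); hence $r_i=2$ throughout. A block $B$ of multiplicity $2$ in $T_+$ would force each of its elements to occur only in the two copies of $B$, so that $r_\al(\{i,a\})=0$ for $i\in B$, $a\notin B$, while $r_{ij}=2$ for $i,j\in B$; tracing these through $T_-$ (every $T_-$-block through $i\in B$ must avoid all $a\notin B$ and, to meet $r_{ij}=2$, contain all of $B\setminus\{i\}$) forces $B$ to occur twice in $T_-$ as well, contradicting disjointness of the legs. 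Thus $T$ is simple, and then Lemma~\ref{gaps} excludes volume $5$. So no $[2]$-trade of volume $5$ exists and the statement holds vacuously there.

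For volume $6$ reducedness gives $r_i\in\{2,3\}$ for every essential $i$. The inductive step is immediate: if some $i\in\found(T)$ admits a volume-preserving projection, then $T$ is an extension of the $[2]$-trade $T^i$ (Lemma~\ref{l:proj}), which has volume $6$ and strictly smaller foundation, hence by the induction hypothesis has one of the four forms, and the matching Proposition among~\ref{l:3-3}--\ref{l:1-1} shows $T$ has the same form. It therefore remains to classify the irreducible trades. For these I would fix an essential $i$ and use the decomposition $T=T_{\ov i}+x_iT_i$ of Lemma~\ref{l:sub}, where $T_i$ is a $[1]$-trade of volume $r_i\in\{2,3\}$ and $T_{\ov i}$ is a $[1]$-trade of volume $6-r_i\in\{3,4\}$. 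The small leg $T_i$ (and, when $r_i=3$, also $T_{\ov i}$) is completely described by Lemma~\ref{l:min} and Theorem~\ref{th:[1]3}, while the pair-replications $r_{ij}$ and the derived trades $T_{i\ov j}$, $T_{\ov{ij}}$ of Lemma~\ref{T_x} control how the blocks through $i$ attach to the rest. The four admissible gluings --- a common $X$-part in every block, a common pair-sum $Y_1Y_2=Z_1Z_2$, a relation $Z_i=Y_1Y_2Y_3$, or a multiplicity-$2$ piece of the Example~\ref{ex:1} type --- should reproduce exactly the forms of Propositions~\ref{l:3-3}, \ref{l:2-2}, \ref{l:1-3}, and \ref{l:1-1} respectively.

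The main obstacle will be this last gluing analysis. One must verify that the disjointness and symmetric-difference constraints (mutual disjointness of the $Y_i$, of the $Z_j$, and $Y_1Y_2Y_3=Z_1Z_2Z_3$, etc.) force precisely the four listed forms and admit no further irreducible type, and in particular that the non-simple case --- where $T_i$ or $T_{\ov i}$ is the coefficient-$2$ $[1]$-trade of Example~\ref{ex:1} --- yields only Proposition~\ref{l:1-1}, equivalently the shape $(1-X)\bar\sigma$ of Proposition~\ref{l:1-1gen}. This bookkeeping is finite but delicate, since some blocks may coincide (a relation such as $Z_i=Y_jY_k$ is permitted), so multiset unions rather than ordinary unions must be tracked at every step, exactly as in the proofs of the individual propositions.
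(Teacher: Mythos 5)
Your inductive frame coincides with the paper's: normalize by shifts, use a volume-preserving projection together with the closure-under-extension Propositions~\ref{l:3-3}--\ref{l:1-1} as the inductive step, and isolate the trades all of whose projections drop in volume. But the classification of that isolated case is exactly where the content of the theorem lies, and your proposal does not prove it: you assert that the decomposition $T=T_{\ov i}+x_iT_i$ admits ``four admissible gluings'' matching the four propositions, and you yourself flag the verification as ``finite but delicate'' bookkeeping still to be done. Nothing in the proposal establishes exhaustiveness, i.e.\ that no fifth gluing is compatible with the $[2]$-conditions, and that is precisely the claim to be proved. For comparison, the paper handles this case concretely: if some projection is void, then $T=(1-x_i)\bar\sigma$ with $\bar\sigma$ a $[1]$-trade of volume $3$, and Theorem~\ref{th:[1]3} puts $T$ into the form of Proposition~\ref{l:1-1}; if every projection has volume $4$, then by Lemma~\ref{l:min} the $i$-projection is $(1-X)(1-Y)(1-Z)$ up to a shift, so $T$ is obtained from this polynomial by multiplying each of its eight blocks by a coefficient $\alpha_{\cdots}\in\{1,x_i\}$ and adding two cancelling pairs $\pm(1-x_i)V\pm(1-x_i)W$; a case analysis on the number of coefficients equal to $x_i$ (reducible to $2$ or $4$ by an $x_i$-shift) then forces every configuration into one of the four propositions. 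Your $T_i$/$T_{\ov i}$ gluing strategy could perhaps be pushed through, but as written it is a plan, not a proof.

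There is also a genuine hole in your volume-$5$ argument. Reducedness gives $r_i\le 2$, and your multiplicity-tracing correctly rules out a repeated \emph{nonempty} block; but it says nothing about the block $\emptyset$, which can a priori occur with multiplicity $2$ in a leg of a reduced trade, since reducedness constrains only replications of elements and the empty block contains none. Hence ``$T$ is simple'' does not follow, and Lemma~\ref{gaps} cannot yet be applied. Closing this gap takes real work: if $\emptyset$ has multiplicity $2$ in $T_+$, then (by your own argument) all remaining blocks are distinct, so the odd-multiplicity part of $T_+\uplus T_-$ is a $[2]$-unitrade of size exactly $8$, hence by Lemma~\ref{kasami} a $3$-dimensional affine subspace of $2^V$ avoiding $\emptyset$, split $3/5$ between the legs with all element- and pair-replications balanced; an analysis of the affine functionals on this $8$-point space shows that no such split exists, but this is a separate nontrivial step. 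Note that the paper's route avoids the issue entirely: its induction treats volumes $5$ and $6$ simultaneously, and nonexistence at volume $5$ falls out at the end because every terminal form has volume $6$, with no appeal to simplicity or to Lemma~\ref{gaps}.
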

In particular, Theorem~\ref{t:[2]} implies that there are no $[2]$-trades of volume $5$, which is a known fact \cite{Hwang:86}.
\begin{proof}{
 We proceed by induction on the number of the elements involved in the blocks of a trade.
 If this number is zero, then the statement is trivial (there are no non-void trades),
 which gives the induction base.
 Let us consider a $[2]$-trade $T$ of volume $5$ or $6$.
 If it has a projection of volume $5$ or $6$, then by the inductive hypothesis the statement of the theorem holds for this projection.
 Hence, it is true for $T$, by Propositions~\ref{l:3-3}--\ref{l:1-1}.

 If $T$ has a void projection, then it has the form $T = (1-x_i)\bar\sigma$, where $\bar\sigma$ is a $[1]$-trade of volume $3$.
 In this case, the statement is straightforward from Theorem~\ref{th:[1]3}.

 It remains to consider the case when all projections have volume $4$.
 For a given $i$, the $i$-projection has the form
 $$(1-X)(1-Y)(1-Z) = 1-X-Y-Z+XY+XZ+YZ-XYZ,$$
 up to a shift. Then
 $$
 T = \alpha_{000}-\alpha_{100}X-\alpha_{010}Y-\alpha_{001}Z+\alpha_{110}XY+\alpha_{101}XZ+\alpha_{011}YZ-\alpha_{111}XYZ \pm (1-x_i) V \pm (1-x_i) W,
 $$
  where $\alpha_{000}, \alpha_{100}, \alpha_{010}, \alpha_{001}, \alpha_{110}, \alpha_{101}, \alpha_{011}, \alpha_{111} \in \{1,x_i\}$ and $V$, $W$ are some blocks with $i\not\in V,W$.
 The number of blocks of $T$ with (or without) element $i$ is at least $2$ and at most $10$;
 taking into account Lemma~\ref{l:sub}, it is $4$, $6$, or $8$.
 So, the number  $p_i$ of coefficients  $\alpha_{\cdots}$ equal to $x_i$ is $2$, $4$, or $6$. W.l.o.g. (up to the $x_i$-shift) we may assume that it is $p_i=2$ or $4$.
 The case of $p_i=2$, up to a shift and renaming  $X$, $Y$, and $Z$, is exhausted by the Cases 1-3 below.

 \noindent{\bf Case 1.}  $\alpha_{000}=\alpha_{100}=x_i$, the other coefficients are $1$:
 $$T = x_i-x_iX-Y-Z+XY+XZ+YZ-XYZ + (1-x_i) V - (1-x_i) W.$$
 Considering the $[1]$-subtrade $x_i-x_iX-x_iV+x_iW$, we see that $X$ and $V$ are disjoint and $W=XV$.
 We find that the case falls under the conditions of Proposition~\ref{l:1-1}, with $Y_1:=x_i$, $Y_2:=V$, $Y_3:=YZ$, $Z_1:=Y$, $Z_2:=Z$, $Z_3:=x_iXV$, and $X=X$.

\noindent{\bf Case 2.}  $\alpha_{000}=\alpha_{110}=x_i$, the other coefficients are $1$:
 $$T = x_i-X-Y-Z+x_iXY+XZ+YZ-XYZ + (1-x_i) V + (1-x_i) W.$$
 Considering the $[1]$-subtrade $x_i+x_iXY-x_iV-x_iW$, we see that $V$ and $W$ are disjoint and $VW=XY$.
 We find that the case falls under the conditions of Proposition~\ref{l:2-2}, with $Y_1:=X$, $Y_2:=Y$, $Y_3:=Z$, $Z_1:=V$, $Z_2:=W$, $Z_3:=x_i$.

\noindent{\bf Case 3.}  $\alpha_{000}=\alpha_{111}=x_i$, the other coefficients are $1$:
  $$T = x_i-X-Y-Z+XY+XZ+YZ-x_iXYZ + (1-x_i) V - (1-x_i) W.$$
 Similar to Case 1,  $XYZ$ and $V$ are disjoint and $W=XYZ\oplus V$.
 The case falls under the conditions of Proposition~\ref{l:1-3}, with $Y_1:=X$, $Y_2:=Y$, $Y_3:=Z$, $Z_1:=V$, $Z_2:=x_i$.

\noindent{\bf Case 4.}  $p_i=4$.

 We can assume that $p_j=4$ for any element $j$ involved in the trade $T$ (otherwise we will be in one of the Cases 1-3);
 so,

 (*) \emph{for every essential element $j$, in the decomposition $T = P+x_jP'$ the volume of the $[1]$-trades $P$ and $P'$ is $3$.}

 In particular,

 (**) \emph{$V$ and $W$ consist of elements of $XYZ$}, as any other element contradicts (*).

 (***) \emph{$VW = XYZ$} (indeed, from (*) we see that every element $j$ from $XYZ$ belongs to exactly one of $V$, $W$).

We consider two subcases.

(4a) Firstly, assume that one of $X$, $Y$, $Z$,  say $X$, has two different elements $j$ and $k$.
Since the $j$-projection of $T$ has volume $4$, we find that $V$ (and hence $W$) differs from
one of $1$, $X$, $Y$, $Z$, $XY$, $XZ$, $YZ$, $XYZ$ in only one element $j$.
Up to a shift, we assume that $V=x_j$.
The same can be said about $k$; so, $X=x_jx_k$. Now, neither $Y$ nor $Z$ can have more than one element (otherwise, there are projections of volume $6$).

Let, w.l.o.g., $\al_{000}=x_i$.
The  $[1]$-subtrade of $T$ consisting of all blocks containing $x_i$  has six blocks, three of which we know: $x_i$, $x_iV$, and $x_iW$.
The other three blocks must sum up to  $x_i \oplus x_iV \oplus x_iW=x_iXYZ$;
so, they are either $x_iX$, $x_iY$, $x_iZ$, or $x_iXY$, $x_iYZ$, $x_iXZ$.
The last case is impossible because the four blocks $x_i$, $x_iXY$, $x_iYZ$, $x_iXZ$ have the same sign.
We conclude that
 $$
 T = x_i-x_iX-x_iY-x_iZ+XY+XZ+YZ-XYZ -V+x_iV - W+x_i W,
 $$
where $X=x_jx_k$, $V=x_j$, $W=x_kYZ$,
which has the $j$-projection of volume $6$, contradicting our assumption.

(4b) The remaining subcase is $|X|=|Y|=|Z|=1$. Each of $V$, $W$ is one of $1$, $X$, $Y$, $Z$, $XY$, $XZ$, $YZ$, $XYZ$.
It is not difficult to conclude %\todo{\bf(give more details?)}
that, up to a shift,
$$T = 2-X-Y-Z-x_i +XYZ+XYx_i+XZx_i+YZx_i-2XYZx_i,$$
which is the case of Proposition~\ref{l:3-3}.}
\end{proof}

%===================================================================
\section{Computational results} \label{s:comp}
In this section we present an algorithm to construct $[t]$-trades  with a given foundation of size $v$. We implement this algorithm and enumerate all
small $[t]$-trades for $t\le4$.

\subsection{Algorithm}\label{s:alg}
Corollary~\ref{c:decomp} allows to compute all possible $s$-small $[t]$-trades $T$ with a foundation of size $v$
if we know all $s$-small $[t]$-trades $T'$ and $s$-small $[t-1]$-trades $T''$ with foundations of size $v-1$.
This gives the possibility to classify, for a given $s$, all $s$-small $[t]$-trades of small foundation recursively, starting from $t=0$.
The following algorithm describes the recursive step.
\begin{enumerate}
 \item[0] Set $\mathcal T := \emptyset$.
 \item[1] For all $s$-small $[t]$-trades $T'$ and all $s$-small $[t-1]$-trades $T''$ do Steps 1.1--1.2.
 \begin{enumerate}
 \item[1.1] Add $ T' - (1-x_v)T'' $ to $\mathcal T$.
 \item[1.2] If $T'-T''$ is not small, then add $ x_v T' + (1-x_v)T'' $ to $\mathcal T$.
\end{enumerate}
\end{enumerate}
At the end, $\mathcal T$ will be the set of all $s$-small $[t]$-trades. Indeed, for every such trade $T$, consider the representation
$T = P + x_v P'$, where $v \not\in\found(P),\found(P')$.
If $P'$ is $s$-small, then $T$ is added at Step 1.1 with $T'=P+P'$ and $T''=P'$.
If $P'$ is not $s$-small, then $P$ is $s$-small, and $T$ is added at Step 1.2 with $T'=P+P'$ and $T''=P$.

From $\mathcal T$, we can choose a complete collection of nonequivalent $s$-small $[t]$-trades
(to be exact, representatives of all equivalence classes).
The graph isomorphism  routine  \cite{nauty2014} is employed to deal with
the equivalence rejection. %is dealt using the graph isomorphism software \cite{nauty2014};
See \cite{KO:alg} for general technique of representing subsets of $2^V$ by graphs, for checking the equivalence.
If we do not need the list of all trades, we can check equivalence at Steps 1.1 and 1.2, and collect only nonequivalent representatives.
In this case, there is an obvious improvement: it is sufficient to consider either only nonequivalent $[t]$-trades $T'$, or only nonequivalent $[t-1]$-trades $T''$.
However, the second component, $T''$ or $T'$, must be chosen from all different trades with corresponding parameters,
and this approach does not allow to make all steps of the recursion by considering only nonequivalent representatives.

%-------------------------------------------------------------------
\subsection{Validity of computational results}\label{s:validity}
The correctness of the computer classification can be partially verified by the following double-counting arguments
(see \cite[Ch.~10]{KO:alg}). Denote by $\aut(T)$ the full automorphism group of a trade $T$, which consists of all equivalence
transformations that send $T$ to itself (recall that an equivalence transformation consists of a shift, a permutation of the elements of $V$, and,
optionally, the swap of the components $T_+$, $T_-$ of the trade $T=(T_+,T_-)$).
The number of all different $s$-small $[t]$-trades with foundation contained in $V$ can be calculated as
\begin{equation}\label{eq:orbit-stabilizer}
\sum |\aut(2^V)|/|\aut(T)|,
\end{equation}
where the summation is over all nonequivalent representatives and $\aut(2^V)$ is the group of all equivalence transformations with
 $|\aut(2^V)|=2\cdot 2^v\cdot v!$.
On the other hand, this number can be found as the total number of solutions found by the algorithm
(if $T'$ or $T''$ runs over nonequivalent representatives,
then every solution is counted with the factor $2^v(v-1)!/|\aut(T')|$ or $2^v(v-1)!/|\aut(T'')|$, respectively).
Coinciding this number with (\ref{eq:orbit-stabilizer}) means that the probability of errors of different kinds is very-very small.

%-------------------------------------------------------------------
\subsection{Results: Construction of small $[t]$-trades with $t\le4$ and $|\found|\le7$ }\label{s:res}
The tables below show the number of $[t]$-trades in $2^V$,
for given $|V|$ and given volume.
The first number in a cell indicates the number of equivalence classes of all  $[t]$-trades.
The second number (in parentheses) indicates the number of equivalence classes
of non-degenerate  $[t]$-trades.
The third, the number of equivalence classes of all simple $[t]$-trades.
The fourth, the number of equivalence classes of non-degenerate simple $[t]$-trades.
Note that the row ``$v=...$'' reflects the numbers for trades in $2^V$ with $|V|=v$,
but the foundation size of the trades can be smaller;
so, the same trades are necessarily counted in the next row,
together with the trades of foundation size $v+1$.

$t=1$:
$$
\begin{array}{r||c|c|c|c|c|c|c|c|c|c|c|c|c|c}
 \mbox{vol.} & 0 & 2            & 3              \\ \hline\hline
 v\le 1      & 1 & 0            & 0              \\ \hline
 v=2         & 1 & 1(1){\ }1(1)   & 0              \\ \hline
 v=3         & 1 & 2(1){\ }2(1)   & 1(1){\ }0(0)     \\ \hline
 v=4         & 1 & 4(1){\ }4(1)   & 5(4){\ }3(3)     \\ \hline
 v=5         & 1 & 6(1){\ }6(1)   & 17(8){\ }13(7)   \\ \hline
 v=6         & 1 & 9(1){\ }9(1)   & 51(12){\ }44(11) \\ \hline
 v=7         & 1 & 12(1){\ }12(1) & 126(14){\ }115(13)            \\ \hline
\end{array}
$$

$t=2$:
$$
\begin{array}{r||c|c|c|c|}
 \mbox{vol.} & 0 & 4            & 6               & 7                           \\ \hline\hline
 v\le 2& 1 & 0             & 0                & 0                            \\ \hline
 v=3   & 1 & 1(1){\ }1(1)    & 0                & 0                    \\ \hline
 v=4   & 1 & 2(1){\ }2(1)    & 2(2){\ }0(0)       & 0                   \\ \hline
 v=5   & 1 & 4(1){\ }4(1)    & 12(9){\ }7(7)      & 7(7){\ }0(0)           \\ \hline
 v=6   & 1 & 7(1){\ }7(1)    & 43(17){\ }32(15)   & 88(63){\ }52(52)               \\ \hline
 v=7   & 1 &  11(1){\,}11(1)  &   130(24){\,}109(22) & 515(161){\,}391(148) \\ \hline
\end{array}
$$

$t=3$:
$$
\begin{array}{r||c|c|c|c|c|c|c|c|c|c|c|c|c|c}
 \mbox{vol.} & 0 & 8            & 12              & 14                & 15           \\ \hline\hline
 v\le 3      & 1 & 0            & 0               & 0                 & 0            \\ \hline
 v=4         & 1 & 1(1){\ }1(1)   & 0               & 0                 & 0            \\ \hline
 v=5         & 1 & 2(1){\ }2(1)   & 2(2){\ }0(0)      & 0                 & 1(1){\ }0(0)   \\ \hline
 v=6         & 1 & 4(1){\ }4(1)   & 15(11){\ }9(9)    & 14(14){\ }0(0)      & 7(6){\ }0(0)   \\ \hline
 v=7         & 1 & 7(1){\ }7(1)   & 56(20){\ }41(18)  & 165(110){\ }89(89)  & 74(51){\ }0(0) \\ \hline
\end{array}
$$

$t=4$:
$$
\begin{array}{r||c|c|c|c|c|c|c|c|c|c|c|c|c|c}
 \mbox{vol.} & 0 & 16           & 24              & 28                & 30           & 31 \\ \hline\hline
 v\le 4      & 1 & 0            & 0               & 0                 & 0            & 0 \\ \hline
 v=5         & 1 & 1(1){\ }1(1)   & 0               & 0                 & 0            & 0 \\ \hline
 v=6         & 1 & 2(1){\ }2(1)   & 2(2){\ }0(0)      & 0                 & 2(2){\ }0(0)   & 0 \\ \hline
 v=7         & 1 & 4(1){\ }4(1)   & 15(11){\ }9(9)    & 17(17){\ }0(0)      & 15(12){\ }0(0) & 0 \\ \hline
\end{array}
$$

%We finish this section with a special statement, which will be used in further theoretical considerations.

\subsection{Proof of Lemma~\ref{l:computational}}

For $t=2$, we can further implement our algorithm to construct all $[t]$-trades $T$ with $2\cdot2^t\le\vol(T)\le3\cdot2^t$ and $|\found(T)|=5$.
In particular, Lemma~\ref{l:computational} is derived. The enumeration of these trades is given in the table below.

$$
\begin{array}{r||c|c|c|c|c|c|c|c|c|}
 \mbox{vol.}            & 8         & 9         & 10         & 11         & 12          \\ \hline\hline
 v\le 2                 & 0          & 0         & 0                & 0              & 0           \\ \hline
 v=3                    & 1(1){\ }0(0) & 0         & 0                & 0              & 1(1){\ }0(0)   \\ \hline
 v=4                  & 7(6){\ }2(2) & 2(2){\ }0(0) & 3(3){\ }0(0)       & 0              & 18(17){\ }0(0) \\ \hline
 v=5           &  94(80){\ }39(36) &  85(82){\,}0(0) &  479(471){\,}20(20) &   771(771){\,}0(0) &   3195(3154){\,}26(26)   \\ \hline
\end{array}
$$

\section*{Acknowledgments}
The research of the first author was in part supported by a grant from IPM (No. 96050211).
The research of the second and third authors was partially supported by  IPM.
The research of the fourth author was supported by
the Program of fundamental scientific researches of the SB RAS No.I.5.1. (project No. 0314-2019-0016).

\bibliographystyle{plain}
\bibliography{k.bib}

 \end{document}